\documentclass[a4paper,12pt,reqno]{amsart}
\usepackage{amsmath}
\usepackage{amsthm}
\usepackage{amsfonts}
\usepackage{amssymb}
\usepackage{amscd}
\usepackage{bm}
\usepackage{color}
\usepackage{draftcopy}
\usepackage{exscale}
\usepackage{extarrows}
\usepackage{epstopdf}
\usepackage{epsfig}
\usepackage{enumerate}
\usepackage{graphics}
\usepackage{graphicx}
\usepackage{latexsym}
\usepackage{mathrsfs}
\usepackage{pifont}
\usepackage{paralist}
\usepackage{esint}
\usepackage{boxedminipage}
\renewcommand\eqref[1]{(\ref{#1})} 
\usepackage[toc,page]{appendix}

\usepackage{hyperref}
\renewcommand\eqref[1]{(\ref{#1})} 

\usepackage[a4paper, hmargin={2.7cm,2.7cm},vmargin={3.3cm,3.3cm}]{geometry}

\newtheorem{theorem}{Theorem}[section]
\newtheorem{proposition}[theorem]{Proposition}
\newtheorem{lemma}[theorem]{Lemma}

\theoremstyle{definition}
\newtheorem{definition}[theorem]{Definition}

\newtheorem{remark}[theorem]{Remark}

\numberwithin{equation}{section}

\newcommand{\supp}{\operatorname{supp}}

\newcommand{\Dom}{\operatorname{Dom}}
\newcommand{\Id}{\operatorname{Id}}

\begin{document}

\title[Lipschitz spaces adapted to Schr\"{o}dinger operators]{Littlewood-Paley and Carleson measure characterizations 
of Lipschitz spaces  \\ adapted to Schr\"{o}dinger operators}

\author[Qing Hong]{Qing Hong${ }^1$}
\thanks{$^1$School of Mathematics and Statistics,
 Jiangxi Normal University,
Nanchang, Jiangxi 330022, China}

\author[Yanfang Xu]{Yanfang Xu${ }^1$}

\author[Guorong Hu]{Guorong Hu${}^{1 \ast}$}
\thanks{$^\ast$Corresponding author, Email: hugr@mail.ustc.edu.cn}

\subjclass[2020]{Primary 42B25; Secondary 35J10, 46E30}



\keywords{Lipschitz space, Schr\"{o}dinger operator, Littlewood-Paley decomposition, Carleson measure, heat kernel}


\thanks{The first author is supported by the National Natural Science Foundation of China (Grant No. 12361017) and 
the Natural Science Foundation of Jiangxi Province (Grant No. 20242BAB25002).
The third author is supported by the National Natural Science Foundation of China (Grant No. 12461018)}

\begin{abstract}
Let $L =-\Delta +V$ be a Schr\"{o}dinger operator on $\mathbb{R}^n$, $n \geq 3$, with the potential $V$ being nonnegative and 
belonging to the reverse H\"{o}lder class $RH_q$ for some $q >n/2$. 
For $0< \alpha <2$, the Lipschitz space $\Lambda_L^\alpha(\mathbb{R}^n)$
adapted to $L$ is defined as the space of all measurable functions $f$ on $\mathbb{R}^n$ such that
\[
\|f\|_{\Lambda_L^\alpha} := \|\rho(\cdot)^{-\alpha}f(\cdot)\|_{L^\infty}+ \sup_{z \in \mathbb{R}^n \backslash \{0\}}
  \frac{\|f(\cdot + z) + f(\cdot -z) -2 f(\cdot)\|_{L^\infty}}{|z|^\alpha} <\infty,
\]
where $\rho$ is the critical radius function related to $L$.
In this paper, we provide characterizations of $\Lambda^\alpha_L(\mathbb{R}^n)$ in terms of Littlewood-Paley-type decompositions and Carleson measures, for $0< \alpha < 2 -(n /q)$.

\end{abstract}

\maketitle

\section{Introduction and  main results}

\allowdisplaybreaks

Let $L =-\Delta + V$ be a Schr\"{o}dinger operator on $\mathbb{R}^n$, $n \geq 3$. Throughout this paper, 
we assume that the potential $V$ is nonnegative and belongs to the reverse H\"{o}lder class $RH_q$ for some $q > n/2$, that is, there exists a constant $C >0$
such that 
\begin{align*}
\left( \frac{1}{|B|} \int_B V(x)^q dx\right)^{1/q} \leq C \frac{1}{|B|} \int_B V(x)dx
\end{align*}
holds  for every ball $B$ in $\mathbb{R}^n$.
Following \cite{ShenAIF}, we define the critical radius function $\rho(\cdot)$ related to $L$ by
\begin{align*}
\rho(x): =\sup \left\{r >0: \frac{r^2}{|B(x,r)|}\int_{B(x,r)}V(y)dy \leq 1\right\}, \quad x \in \mathbb{R}^n.
\end{align*}

The Lipschitz spaces $\Lambda^\alpha_L(\mathbb{R}^n)$ adapted to 
$L$ were first introduced by Bongioanni, Harboure and Salinas \cite{BHS-weighted} in the case $0< \alpha <1$ via first difference,
and then extended by De Le\'{o}n-Contreras and Torrea \cite{DT} to the case $0< \alpha <2$ via second difference.  

\begin{definition} \label{def:Lipschitz} {\rm (cf. \cite[Definition 1.1]{DT})}
For $0 < \alpha <2$, the Lipschitz space $\Lambda^\alpha_{L} (\mathbb{R}^n)$ adapted to $L$ is defined as 
the space of all measurable functions $f$ on $\mathbb{R}^n$ such that
\begin{align*}
 \|f\|_{\Lambda^\alpha_{L}}:= \|\rho(\cdot)^{-\alpha}f(\cdot)\|_{L^\infty} +  \sup_{z \in \mathbb{R}^n \backslash \{0\}}
  \frac{\|f(\cdot + z) + f(\cdot -z) -2 f(\cdot)\|_{L^\infty}}{|z|^\alpha} <\infty.
\end{align*}
\end{definition}

\begin{remark} 
\begin{enumerate}[(i)]
    \item When $V \equiv 0$, 
we have $\rho \equiv +\infty$ and hence $\|\rho(\cdot)^{-\alpha}f(\cdot) \|_{L^\infty} =0$, 
so the Lipschitz space $\Lambda_{-\Delta}^\alpha (\mathbb{R}^n)$ ($0< \alpha <2$) adapted to
the Laplacian $-\Delta$ coincides with
the classical homogeneous  Lipschitz space $\dot{\Lambda}^\alpha (\mathbb{R}^n)$ which is defined as the space of all measurable functions
$f$ such that
\[
\|f\|_{\dot{\Lambda}^\alpha} := \sup_{z \in \mathbb{R}^n \backslash \{0\}}
  \frac{\|f(\cdot + z) + f(\cdot -z) -2 f(\cdot)\|_{L^\infty}}{|z|^\alpha} <\infty.
\]
\item When $V \equiv 1$, we have $\rho \equiv 1$
and hence $\|f(\cdot)\rho(\cdot)^{-\alpha}\|_{L^\infty} = \|f\|_{L^\infty}$,  so 
the Lipschitz space $\Lambda_{-\Delta+\Id}^\alpha (\mathbb{R}^n)$ ($0< \alpha <2$) adapted to
the operator $-\Delta + \Id$ (where $\Id$ is the identity operator) coincides with 
the classical inhomogeneous Lipschitz  space $\Lambda^\alpha (\mathbb{R}^n)$ which is defined as the space of all measurable functions $f$
such that
\[
\|f\|_{{\Lambda}^\alpha} :=\|f\|_{L^\infty} + \sup_{z \in \mathbb{R}^n \backslash \{0\}}
  \frac{\|f(\cdot + z) + f(\cdot -z) -2 f(\cdot)\|_{L^\infty}}{|z|^\alpha} <\infty.
\]
\end{enumerate}
\end{remark}

The Lipschitz spaces $\Lambda_L^\alpha (\mathbb{R}^n)$ adapted to $L$ have been analyzed by several authors.
Bongioanni {\it et al.} in \cite{BHS-weighted}  first introduced the space $\Lambda_L^\alpha(\mathbb{R}^n)$, for $0< \alpha <1$, via the norm
\[
 \|f\|_{\Lambda^\alpha_L}:= \|\rho(\cdot)^{-\alpha}f(\cdot)\|_{L^\infty} +  \sup_{z \in \mathbb{R}^n \backslash \{0\}}
  \frac{\|f(\cdot + z) - f(\cdot)\|_{L^\infty}}{|z|^\alpha},
\]
and  showed that this space coincides with the space $BMO_L^\alpha(\mathbb{R}^n)$ which  is 
 defined as the space of locally integrable functions such that
 \[
 \int_B \left|f(x)-\frac{1}{|B|}\int_B f(y)dy\right|dx \leq C |B|^{1 +\frac{\alpha}{n}} \quad \text{for every ball } B =B(x_B,r_B),
 \]
 and
 \[
 \int_B |f(x)|dx \leq C |B|^{1 +\frac{\alpha}{n}} \quad \text{if } r_B \geq \rho(x_B).
 \]
They also study the behavior of the fractional integral $L^{-\beta/2}$ on   $\Lambda_L^\alpha(\mathbb{R}^n)$ for $0< \alpha <1$, $\beta \geq 0$
and $\alpha + \beta < \min\{1,2 -(n/q)\}$.
 Ma {\it et al.} in \cite{MSTZ} prove a characterization of the space $\Lambda_L^\alpha(\mathbb{R}^n)$, for $0<\alpha <1$, in
 terms of Carleson measures and fractional derivatives of any order of the Poisson semigroup $e^{-t\sqrt{L}}$, and studied the behavior of the
 fractional powers $L^{\beta/2}$ (positive and negative) and  multipliers of Laplace transform type on the space $\Lambda_L^\alpha(\mathbb{R}^n)$.
De Le\'{o}n-Contreras and Torrea \cite{DT} recently extended the 
definition of $\Lambda_L^\alpha(\mathbb{R}^n)$ to $0< \alpha <2$ via second difference (see Definition~\ref{def:Lipschitz}), and 
generalized the regularity results obtained in \cite{MSTZ}.
In the case of the Hermite operator $H =-\Delta + |x|^2$, the associated H\"{o}lder spaces $C^{k,\alpha}_H(\mathbb{R}^n)$, for $k \in \mathbb{N}_0$ and $0< \alpha \leq 1$,  
 were investigated by Stinga and Torrea \cite{ST1}.  
 
\medskip

The purpose of the present paper is to provide
new characterizations of $\Lambda_{L}^\alpha (\mathbb{R}^n)$ in terms of Littlewood-Paley-type decompositions
and Carleson measures, for $0< \alpha < 2-(n /q)$. To state our results explicitly, we need to introduce some notions and terminologies.
First we give the definition of the test functions  adapted to the operator ${L}$, which was first introduced by 
Kerkyacharian and Petrushev \cite{KP}.
\begin{definition}(cf. \cite[Section 5.2]{KP})
A function $\phi \in L^2 (\mathbb{R}^n)$ is said to a \textit{test function adapted to ${L}$}, 
if $\phi \in \bigcap_{m \in \mathbb{N}}\Dom({L}^m)$ and for each $m,\ell \in \mathbb{N}_0$, 
\begin{align} \label{eq:seminorm}
\|\phi\|_{(m,\ell)}:= \sup_{x \in \mathbb{R}^n} (1 +|x|)^{\ell}|{L}^m \phi (x)|  < \infty.
\end{align}
We denote by $\mathcal{S}_L(\mathbb{R}^n)$ the set of all test functions adapted to ${L}$.
\end{definition}
Endowed with the family $\{\|\cdot\|_{(m,\ell)}\}_{m,\ell \in \mathbb{N}_0}$ of semi-norms, $\mathcal{S}_{L}(\mathbb{R}^n)$ is a Frech\'{e}t space; 
see \cite[Proposition 5.3]{KP}. 
Elements in the topological dual of $\mathcal{S}_{L}(\mathbb{R}^n)$, denoted by $\mathcal{S}_{L}'(\mathbb{R}^n)$, 
are naturally considered as
\textit{distributions adapted to ${L}$}.
The duality pairing between $f \in \mathcal{S}_{L}'(\mathbb{R}^n)$ and $g \in \mathcal{S}_{L}(\mathbb{R}^n)$ will be denoted by $( f, g )$. 

\medskip

Note that when $L=-\Delta$, the space $\mathcal{S}_{L}(\mathbb{R}^n)$ (resp. $\mathcal{S}_{L}'(\mathbb{R}^n)$) coincides
with the classical Schwartz function space $\mathcal{S}(\mathbb{R}^n)$ (resp. the classical tempered distribution space $\mathcal{S}'(\mathbb{R}^n)$);
see \cite[Proposition 5.6]{KP}.

\medskip
For $f \in \mathcal{S}_{L}'(\mathbb{R}^n)$ and $m \in \mathbb{N}_0$, we define 
${L}^m f$ as an element in $\mathcal{S}_L'(\mathbb{R}^n)$ such that 
\[
({L}^mf, \phi) = (f, {L}^m\phi) \quad \text{for all } \phi \in \mathcal{S}_{L}(\mathbb{R}^n).
\]

We also need the notion of generalized polynomials adapted to ${L}$ which are defined as follows. 

\begin{definition}(see \cite[Section 3.2]{GKKP1}) A distribution $P \in \mathcal{S}_L'(\mathbb{R}^n)$ is said to be a \textit{generalized polynomial adapted to} $L$
if there exists $m \in \mathbb{N}_0$ such that ${L}^m P =0$ in $\mathcal{S}_L'(\mathbb{R}^n)$.  The space of all generalized polynomials adapted to $L$ 
is denoted by $\mathcal{P}_{L}(\mathbb{R}^n)$. 
\end{definition}

Since $V$ is nonnegative and locally integrable on $\mathbb{R}^n$, 
the Schr\"{o}dinger operator $L = -\Delta +V$, originally defined on $C_0^\infty (\mathbb{R}^n)$, 
extends to a densely defined nonnegative self-adjoint operator
on $L^2(\mathbb{R}^n)$, which is also denoted by $L$. 

Denote by $\{E(\lambda):\lambda \geq 0\}$ the spectral resolution of $L$. For any Borel measurable function $\Phi : [0,\infty) \rightarrow \mathbb{C}$, the ``multiplier operator'' $\Phi(\sqrt{L})$
can be defined by the spectral theorem according to the prescription 
\[
\Phi(\sqrt{L}) = \int_0^\infty \Phi(\sqrt{\lambda}) dE(\lambda).
\]

To recall some results concerning smooth functional calculus related to $L$, we introduce
the class $\mathcal{A}([0,\infty))$ as follows:
\begin{equation} \label{eq:def of A}
\begin{split}
\mathcal{A}([0,\infty)) := \big\{\Phi \in C^\infty ([0,\infty)): & \; \Phi \text{ can be extended }\\
& \text{ to an even Schwartz function on } \mathbb{R}\big\}.   
\end{split}
\end{equation}
Observe that if $\Phi \in C^\infty ([0,\infty))$ and $\supp \Phi \subset [a,b]$ for some $0< a< b <+\infty$, 
then $\Phi \in \mathcal{A}([0,\infty))$.
An interesting result  in \cite[Proposition 5.3]{KP} is that if $\Phi\in \mathcal{A}([0,\infty))$,
then the operator $\Phi(\sqrt{L})$ is an integral operator with a kernel $K_{\Phi(\sqrt{L})}(\cdot,\cdot)$ such that
\begin{equation} \label{eq:Schwartz with x fixed}
\begin{split}
K_{\Phi(\sqrt{L})}(x,\cdot) \in \mathcal{S}_L(\mathbb{R}^n) \ & \text{with } x \text{ fixed } \\
&\text {and }  K_{\Phi(\sqrt{L})}(\cdot, y) \in \mathcal{S}_L(\mathbb{R}^n)  \ \text{with } y \text{ fixed}. 
\end{split}
\end{equation}
This result holds for all (abstract) nonnegative self-adjoint operator whose heat kernel 
obeys a pointwise Gaussian estimate, and in particular, this holds for Schr\"{o}dinger operators with nonnegative locally integrable potentials
(see \eqref{eq:GUE} in Section \ref{sect:kernel estimates}). 

\medskip
Assuming that $\Phi\in\mathcal{A}([0,\infty))$, we continue discussing 
properties of the operator $\Phi(\sqrt{L})$. 
From \cite[Proposition 5.3]{KP} we see that  $\Phi(\sqrt{L})$ maps continuously $\mathcal{S}_{L}(\mathbb{R}^n)$ into 
$\mathcal{S}_{L}(\mathbb{R}^n)$. 
Hence $\Phi(\sqrt{L})$ extends to be a continuous operator
$\Phi(\sqrt{{L}}): \mathcal{S}_{L}'(\mathbb{R}^n)\rightarrow \mathcal{S}_{L}'(\mathbb{R}^n)$
in the following sense: for any $f \in \mathcal{S}_{L}'(\mathbb{R}^n)$,
\begin{align} \label{eq:Phi L f 1}
\big(\Phi(\sqrt{{L}})f, \phi \big) := \big(f, \Phi(\sqrt{L})\phi \big), \quad \phi \in \mathcal{S}_{L}(\mathbb{R}^n).   
\end{align}
Note that (cf. \cite[Proposition 3.2]{GKKP1}) the distribution $\Phi(\sqrt{{L}})f$ defined above coincides with the function  
\begin{align} \label{eq:Phi L f}
\Phi(\sqrt{L})f (x) = \big(f, K_{\Phi(\sqrt{L})}(x,\cdot) \big), \quad x \in \mathbb{R}^n.
\end{align}

\begin{remark}
Note that if $f \in \Lambda^\alpha_L (\mathbb{R}^n)$, then $f$ can be regarded as a distribution
in $\mathcal{S}_L'(\mathbb{R}^n)$. Indeed, from Definition \ref{def:Lipschitz} we see that
$\Lambda_L^\alpha (\mathbb{R}^n)$ is a subspace of the classical homogeneous Lipschitz space $\dot{\Lambda}^\alpha(\mathbb{R}^n)$, and hence any function $f \in \Lambda^\alpha_L (\mathbb{R}^n)$
have at most polynomial growth at infinity. See 
 \cite[Section 6.3.1]{Grafakos}.
 \end{remark}

Our first main result  is the following
\begin{theorem} \label{thm:main-1}
Assume that $|\{x\in \mathbb{R}^n: V(x) >0\}| \neq 0$.
Let $0< \alpha < 2 -(n /q)$ and let $\Phi \in C^\infty ([0,\infty))$  satisfy
\begin{align} \label{eq:Phi condition}
\supp \Phi \subset  [1/2,2] \quad \text{and} \quad |\Phi(\lambda)| \geq c >0
\ \text{  for  } \  3/5  \leq \lambda \leq 5/3.
\end{align}
 \begin{enumerate}[\rm (i)]
     \item If $f\in \Lambda_L^\alpha(\mathbb{R}^n)$, then 
     \[
      \sup_{j \in \mathbb{Z}} \|\Phi(2^{-j}\sqrt{{L}})f\|_{L^\infty}  <\infty.
     \]
     Moreover,  there exists a positive constant $C$ such that for all $f \in \Lambda_{L}^\alpha (\mathbb{R}^n)$,  
    \[
     \sup_{j \in \mathbb{Z}} \|\Phi(2^{-j}\sqrt{{L}})f\|_{L^\infty} \leq C \|f\|_{\Lambda_{L}^\alpha}.
     \]
     \item For a distribution $f \in \mathcal{S}_{L}'(\mathbb{R}^n)$, if 
     \begin{align} \label{eq:finite Besov norm}
     \sup_{j \in \mathbb{Z}}2^{j\alpha} \|\Phi(2^{-j}\sqrt{{L}})f\|_{L^\infty}<\infty,
     \end{align}
     then there exists a generalized polynomial $P_f \in \mathcal{P}_L$ such that 
     $f -P_f \in \Lambda_{L}^\alpha (\mathbb{R}^n)$. Furthermore, there exists a constant $C'$ such that 
     for all $f \in \mathcal{S}_L'(\mathbb{R}^n)$ 
     \[
     \|f-P_f\|_{\Lambda_{L}^\alpha} \leq C' \sup_{j \in \mathbb{Z}}2^{j\alpha} \|\Phi(2^{-j}\sqrt{{L}})f\|_{L^\infty}.
     \]
     
     \end{enumerate}
\end{theorem}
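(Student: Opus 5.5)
The plan is to split the scales $j\in\mathbb{Z}$ into the small-scale range $j\ge 0$ and the large-scale range $j<0$, and to write $t=2^{-j}$ and $K_t(x,y)=K_{\Phi(t\sqrt L)}(x,y)$ throughout. For $f\in\Lambda^\alpha_L(\mathbb{R}^n)$, the function $\Phi(t\sqrt L)f(x)=\int K_t(x,y)f(y)\,dy$ is well defined by \eqref{eq:Phi L f}, since $f$ has polynomial growth and $K_t(x,\cdot)\in\mathcal S_L(\mathbb{R}^n)$ by \eqref{eq:Schwartz with x fixed}. The kernel tools I intend to use are those of Section~\ref{sect:kernel estimates}, in their standard form for $V\in RH_q$:
\[
|K_t(x,y)|\le C_N t^{-n}\Bigl(1+\tfrac{|x-y|}{t}\Bigr)^{-N}\Bigl(1+\tfrac{t}{\rho(x)}\Bigr)^{-N}.
\]
I also need the perturbation estimate comparing $K_t$ with the classical kernel $\Phi(t\sqrt{-\Delta})$, namely a gain $(t/\rho(x))^{\delta}$ with $\delta=2-n/q$. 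Finally I need Shen's comparison $\rho(y)\le C\rho(x)(1+|x-y|/\rho(x))^{k_0/(k_0+1)}$.

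\emph{Step 1 (scales with $t\ge\rho(x)$).} Here I only use the size of $f$. From $|f(y)|\le\|f\|_{\Lambda^\alpha_L}\rho(y)^\alpha$ and the comparison of $\rho(y)$ with $\rho(x)$, integrating against the kernel bound gives
\[
|\Phi(t\sqrt L)f(x)|\le C\|f\|_{\Lambda^\alpha_L}\,\rho(x)^\alpha\Bigl(1+\tfrac{t}{\rho(x)}\Bigr)^{\alpha-N}.
\]
With $N$ large this is at most $C\|f\|_{\Lambda^\alpha_L}\,t^{\alpha}(t/\rho(x))^{-N'}$.

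\emph{Step 2 (scales with $t<\rho(x)$).} Here I use the smoothness of $f$. I write $K_t=\widetilde K_t+(K_t-\widetilde K_t)$, where $\widetilde K_t$ is the even classical kernel of $\Phi(t\sqrt{-\Delta})$. For the classical part, evenness and vanishing moments give
\[
\int\widetilde K_t(x,y)f(y)\,dy=\tfrac12\int\widetilde K_t(0,z)\bigl[f(x+z)+f(x-z)-2f(x)\bigr]dz,
\]
which is $O(t^\alpha\|f\|_{\Lambda^\alpha_L})$. The difference part is controlled by $(t/\rho(x))^{\delta}$ times an integral of $|f(y)|\le\|f\|\rho(y)^\alpha$. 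This yields $O(\|f\|\,t^\delta\rho(x)^{\alpha-\delta})\le O(\|f\|\,t^\alpha)$ precisely because $\alpha<\delta=2-n/q$; this is where the hypothesis on $\alpha$ enters.

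\emph{Step 3 (conclusion of (i)).} Steps 1--2 give $|\Phi(2^{-j}\sqrt L)f(x)|\le C\|f\|\,2^{-j\alpha}$ pointwise, uniformly in $x$. For $j\ge0$ this is at most $C\|f\|$, which is the stated bound on that range. For $j<0$, i.e.\ $t>1$, this estimate alone is not uniform in $j$. Instead I will use the sharper form of Step 1, namely $\rho(x)^\alpha(1+t/\rho(x))^{\alpha-N}$, together with the assumption $|\{V>0\}|\ne0$, which forces $\rho$ to be finite everywhere. I will also need a lower bound of the spectrum-type on the bottom of $L$ relative to $\rho$, and I will try to extract from Section~\ref{sect:kernel estimates} a large-$t$ decay of $\|K_t(x,\cdot)\rho(\cdot)^\alpha\|_{L^1}$. \textbf{This large-scale range $j<0$ is the step I expect to be the main obstacle.} I would first try to prove $\sup_x\|K_t(x,\cdot)\rho(\cdot)^\alpha\|_{L^1}\le C$ for $t\ge1$. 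If that fails when $\rho$ is unbounded, the uniform bound can only hold in the weighted form already obtained.

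\emph{Part (ii).} This follows the homogeneous Besov route of \cite{GKKP1}. Choose $\Psi$ with the same support properties so that $\sum_j\Psi(2^{-j}\lambda)\Phi(2^{-j}\lambda)=1$ on $(0,\infty)$; this is possible by the nondegeneracy in \eqref{eq:Phi condition}. Then $f=\sum_j\Psi(2^{-j}\sqrt L)\Phi(2^{-j}\sqrt L)f$ modulo $\mathcal P_L$. For the high-frequency series ($j\ge0$), the norms are summable in $L^\infty$ with bound $2^{-j\alpha}$, and second differences are handled by the kernel smoothness of order $<\delta$. For the low-frequency part ($j<0$), I subtract suitable generalized polynomials (Taylor-type corrections via $L^m$) to make the series converge. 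Second differences then follow from $\min(|z|^2 2^{2j},1)2^{-j\alpha}$ summed over $j$, which is of order $|z|^\alpha$ since $\alpha<2$. The weighted size bound $\rho^{-\alpha}|f-P_f|$ comes from the factor $(1+2^{-j}/\rho(x))^{-N}$ in the kernel bounds.
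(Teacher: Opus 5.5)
Your Step~3 ``obstacle'' comes from a misprint: the factor $2^{j\alpha}$ has dropped out of (i). What the paper proves is $\sup_{j}2^{j\alpha}\|\Phi(2^{-j}\sqrt{L})f\|_{L^\infty}\le C\|f\|_{\Lambda^\alpha_L}$, the same quantity that appears in (ii). Your Steps~1--2 give $|\Phi(t\sqrt{L})f(x)|\lesssim t^\alpha\|f\|_{\Lambda^\alpha_L}$ for all $t$ and $x$, which is exactly that bound. The unweighted bound for $j<0$ is false in general when $\rho$ is unbounded, so drop it. (A bump of height $t^\alpha$ and width $t$ centred where $\rho\gg t$ has $\Lambda^\alpha_L$-norm $O(1)$; $L$ acts like $-\Delta$ there, so $\|\Phi(t\sqrt{L})f\|_{L^\infty}\gtrsim t^\alpha$.)

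The step that carries the weight in your (i) is Step~2. It needs $|K_{\Phi(t\sqrt{L})}(x,y)-K_{\Phi(t\sqrt{-\Delta})}(x,y)|\lesssim (t/\rho(x))^{\delta}t^{-n}(1+|x-y|/t)^{-N}$ for $t<\rho(x)$. This is not among the tools of Section~\ref{sect:kernel estimates}; it is classical for the heat kernel and its time derivatives, not for compactly supported multipliers, and you would have to prove it. If proved, your route would be self-contained. The paper instead avoids the issue. De Le\'on-Contreras and Torrea already carried out your size/smoothness splitting for the heat semigroup, giving $\sup_t t^{-\alpha}\|\Theta_{(k)}(t\sqrt{L})f\|_{L^\infty}\lesssim\|f\|_{\Lambda^\alpha_L}$ (Proposition~\ref{prop:heat semigroup 1}(i)). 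The paper transfers this to $\Phi$ in three moves:
write $f=\sum_\ell\Omega(2^{-\ell}\sqrt{L})\Theta_{(k)}(2^{-\ell}\sqrt{L})f$ modulo $\mathcal{P}_L$;
note that $\Phi(2^{-j}\sqrt{L})$ annihilates $\mathcal{P}_L$ because $\Phi$ vanishes near $0$;
sum using the decay $2^{-2M|j-\ell|}$ of Proposition~\ref{prop:AOE}.

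Part (ii), however, has three genuine gaps.

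\emph{First}, the ``Taylor-type generalized polynomial corrections'' for $j<0$ are neither defined nor available. By Lemmas~\ref{lem:5.2} and~\ref{lem: size0}, a generalized polynomial adapted to $L$ that is a function of polynomial growth vanishes a.e. No subtraction is needed either. The missing idea is to split at $2^{-j}\sim\rho(x_0)$ rather than at $j=0$. Set $A:=\sup_j2^{j\alpha}\|\Phi(2^{-j}\sqrt{L})f\|_{L^\infty}$. On $B(x_0,\rho(x_0))$ the terms $\Psi(2^{-j}\sqrt{L})\Phi(2^{-j}\sqrt{L})f$ are $O(A2^{-j\alpha})$ for $2^{-j}\le\rho(x_0)$. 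For $2^{-j}>\rho(x_0)$ they are $O(A2^{-j\alpha}(2^{-j}/\rho(x_0))^{-M})$ with $M>\alpha$, by Proposition~\ref{prop:kernel phi t L}. So the full series converges absolutely and locally uniformly to a function $\tilde f$ with $|\tilde f|\lesssim A\rho^\alpha$.

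\emph{Second}, you never actually produce $P_f$. You must show that the partial sums converge to $\tilde f$ in $\mathcal{S}_L'(\mathbb{R}^n)$, by dominated convergence using $\rho(x)\lesssim(1+|x|)^\theta$. Then, since $\mathcal{S}_L'/\mathcal{P}_L$ is Hausdorff, the two limits agree and $P_f:=f-\tilde f\in\mathcal{P}_L$.

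\emph{Third}, the bound $\sum_j\min(|z|^2\,2^{2j},1)2^{-j\alpha}$ presupposes $C^2$-type control of $K_{\Psi(2^{-j}\sqrt{L})}(\cdot,y)$. That is not available for general $V\in RH_q$, where one can only expect regularity of order $2-n/q$. What you would actually need, with $K_t=K_{\Psi(t\sqrt{L})}$ and some $\gamma\in(\alpha,2-n/q)$, is
$\int|K_t(x+z,y)+K_t(x-z,y)-2K_t(x,y)|\,dy\lesssim(|z|/t)^\gamma$.
This is another unproved lemma.

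The paper needs no kernel regularity at all. From $\rho^{-\alpha}\tilde f\in L^\infty$ one gets $\tilde f\in\mathcal{H}_L$. Proposition~\ref{prop:heat semigroup 1}(ii) then reduces everything to bounding $\sup_t t^{-\alpha}\|\Theta_{(k)}(t\sqrt{L})\tilde f\|_{L^\infty}$. This follows from $\Theta_{(k)}(t\sqrt{L})\tilde f=\sum_j\Theta_{(k)}(t\sqrt{L})\Psi(2^{-j}\sqrt{L})\Phi(2^{-j}\sqrt{L})f$ and the almost-orthogonality decay $2^{-2k|j-\ell_0|}$, since $2k>\alpha$.
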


\begin{remark}
Littlewood-Paley characterization of the classical Lipschitz spaces on $\mathbb{R}^n$ was established 
by Frazier, Jawerth and Weiss \cite{FJW} and Triebel \cite[\S  2.6.1]{Triebel}. 
Lipschitz spaces on stratified groups were studied by Folland \cite{Folland} and Krantz \cite{Krantz}, and 
a characterization of Lipschitz spaces on stratified groups in terms of Littlewood-Paley decompositions
was proved by Hu \cite{Hu-CMJ}. Littlewood-Paley characterizations of some multi-parameter Lipschitz spaces 
were established in \cite{HH, HHLT-1, HHLT-2}.

\end{remark}

Now we turn to the Carleson measure characterization of $\Lambda_L^\alpha (\mathbb{R}^n)$.

\begin{definition}
Let $\alpha \geq 0$. The Carleson measure space $\mathcal{C}^\alpha$ is defined as the set
of all nonnegative Borel measures $\mu$ on $\mathbb{R}_+^{n+1} :=\mathbb{R}^n \times (0,\infty)$ such that
\begin{align} \label{eq:def of carleson}
\|\mu\|_{\mathcal{C}^\alpha} := \sup_{B} \frac{\mu(\widehat{B})}{|B|^{1 +\frac{2\alpha}{n}}} <\infty,
\end{align}
where the supremum is over all balls $B \subset \mathbb{R}^n$, and 
$\widehat{B} := \{(x,t): x \in B, \ 0< t <r_B\}$, with $r_B$ being the radius of $B$.
\end{definition}

For each $\gamma >0$, we denote
\[
\mathcal{F}_\gamma(\mathbb{R}^n) := \left\{f: f \text{ is measurable on } \mathbb{R}^n \text{ and } 
\int_{\mathbb{R}^n} \frac{|f(x)|}{(1 + |x|)^{n +\gamma}} dx <\infty\right\}.
\]
We then set 
\begin{align*}
\mathcal{F}(\mathbb{R}^n) := \bigcup_{\gamma >0} \mathcal{F}_\gamma (\mathbb{R}^n).
\end{align*}
It is easy to see that $\mathcal{F}(\mathbb{R}^n) \subset \mathcal{S}_L'(\mathbb{R}^n)$.

\medskip

For each $k \in \mathbb{N}_0$, let $\Theta_{(k)}(\cdot)$ be the function on $[0,\infty)$  defined by
\begin{align} \label{eq:def of Theta}
\Theta_{(k)} (\lambda):=  \lambda^{2k}e^{-\lambda^2}, \quad \lambda \in [0,\infty).
\end{align}
It is clear that $\Theta_{(k)}(\cdot) \in \mathcal{A}([0,\infty))$. Hence, for any $t>0$ and any $f \in \mathcal{F}(\mathbb{R}^n) \subset  \mathcal{S}_L'(\mathbb{R}^n)$,
$\Theta_{(k)}(t \sqrt{L}) f$ a well-defined distribution according to \eqref{eq:Phi L f 1}, and this 
distribution coincides with a function given by \eqref{eq:Phi L f}.
In what follows, for each $k \in \mathbb{N}$ and $f \in \mathcal{F}(\mathbb{R}^n)$, we defined 
the associated measure $d\mu_{(k,f)}$ on $\mathbb{R}_+^{n+1}:=\{(x,t): x \in \mathbb{R}^n, t >0\}$ by 
\begin{align} \label{eq:def of mukf}
d\mu_{(k,f)}:= \big|\Theta_{(k)}(t\sqrt{L})f(x)\big|^2 \frac{dxdt}{t}= \big|(t^2L)^k e^{-t^2L}f(x)\big|^2\frac{dx dt}{t}.
\end{align}
Then, according to \eqref{eq:def of carleson},
\begin{equation} \label{eq:def of d mu k f}
\begin{split}
\|d\mu_{(k,f)}\|_{\mathcal{C}^\alpha}& = \sup_{\substack{B\subset \mathbb{R}^n \atop 
B: \text{ ball}}} \frac{1}{|B|^{1 +\frac{2\alpha}{n}}} \int_0^{r_B} \int_{B}\big|\Theta_{(k)}(t\sqrt{L})f(x)\big|^2\frac{dx dt}{t}.
\end{split}
\end{equation}

 With the above notation, our second main result can be stated as follows.

\begin{theorem} \label{thm:main-2}
Assume that $|\{x\in \mathbb{R}^n: V(x) > 0\}| \neq 0$. Let $0< \alpha < 2-(n/q)$ and $k \in \mathbb{N}$ 
with $k > {\alpha}/{2}$. Then we have:
\begin{enumerate}[\rm (i)]
    \item If $f \in \Lambda_L^\alpha(\mathbb{R}^n)$, then $f \in \mathcal{F}(\mathbb{R}^n)$ and $d\mu_{(k,f)} \in \mathcal{C}^\alpha$. Furthermore,  
    there exists a constant $C >0$ such that for all $f \in \Lambda_L^\alpha (\mathbb{R}^n)$,
    \begin{align} \label{eq:1.7-i}
         \big\|d\mu_{(k,f)}\big\|_{\mathcal{C}^\alpha} \leq C \|f\|_{\Lambda_L^\alpha}^2.
    \end{align}
    \item If $f \in \mathcal{F}(\mathbb{R}^n)$ such that $d\mu_{(k,f)} \in \mathcal{C}^\alpha$, then $f \in \Lambda_L^\alpha (\mathbb{R}^n)$. Furthermore, there exists a constant $C'>0$ such that for all $f \in \mathcal{F}(\mathbb{R}^n)$,
    \begin{align*}
 \|f\|_{\Lambda_L^\alpha}^2 \leq C'\big\|d\mu_{(k,f)}\big\|_{\mathcal{C}^\alpha}.
\end{align*}
\end{enumerate}
\end{theorem}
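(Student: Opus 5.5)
Both directions will rest on kernel estimates for $\Theta_{(k)}(t\sqrt L)$ of the type developed in Section~\ref{sect:kernel estimates}. Write $Q_t:=(t^2L)^ke^{-t^2L}$ with kernel $Q_t(x,y)$. The estimates we need, valid for $0<\delta<2-n/q$ and every $N$, are:
\begin{enumerate}[(a)]
\item the Gaussian bound with extra decay $|Q_t(x,y)|\le C t^{-n}e^{-c|x-y|^2/t^2}(1+t/\rho(x))^{-N}$;
\item H\"older regularity of order $\delta$ in each variable;
\item the approximate cancellation
\[
\Bigl|\int Q_t(x,y)\,dy\Bigr|\le C\bigl(t/\rho(x)\bigr)^{\delta}(1+t/\rho(x))^{-N},
\]
obtained by comparing with the classical kernel $(t^2\Delta)^ke^{t^2\Delta}$, whose integral vanishes, via the perturbation formula for $e^{-tL}-e^{t\Delta}$ and Shen's bounds on $\int_{B(x,r)}V$ for $r\le\rho(x)$;
\item a second-order cancellation adapted to second differences, needed because $\alpha$ may exceed $1$.
\end{enumerate}
For (d) I would use the symmetrization $\int Q_t(x,y)[f(y)-f(x)]\,dy$ in which only the even part in $y-x$ of the Laplacian kernel survives. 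Concretely, $Q_t-\widetilde Q_t$, where $\widetilde Q_t$ is the Laplacian kernel, is bounded by $t^{-n}(t/\rho(x))^{\delta}$ times Gaussian decay, and this together with $\alpha<\delta$ is what makes the restriction $\alpha<2-n/q$ appear.

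\textbf{Part (i).} Membership $f\in\mathcal F$ is immediate, since $\dot\Lambda^\alpha$ functions grow at most like $|x|^\alpha$ with $\alpha<2$, and so they lie in $\mathcal F_\gamma$ for $\gamma>\alpha$. Fix a ball $B=B(x_B,r)$. For $x\in B$ and $t<r$, split $Q_tf(x)=\int Q_t(x,y)[f(y)-f(x)]dy+f(x)\int Q_t(x,y)dy$.

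For the first term, write it as $\int \widetilde Q_t(x,y)\cdots+\int (Q_t-\widetilde Q_t)\cdots$. The classical piece equals $\tfrac12\int \widetilde Q_t(x,x+z)[f(x+z)+f(x-z)-2f(x)]\,dz=O(t^\alpha)$. The difference piece is bounded, using $|f(y)-f(x)|$, by a combination of $\|f\|_{\Lambda^\alpha_L}$-controlled quantities. Here I would use the known fact that $|f(y)-f(x)|\lesssim |x-y|^{\alpha}$ when $\alpha<1$, and when $1\le\alpha<2$ use $|\nabla f|$-type growth bounded by $\rho(x)^{\alpha-1}$, which one derives from the size condition $|f|\le\rho^\alpha$ combined with Marchaud-type inequalities on balls of radius $\rho(x)$. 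Combined with the factor $(t/\rho)^\delta$, this yields $O(t^\alpha)$ since $t\le\rho$ in the relevant regime, while for $t>\rho(x)$ the $N$-decay handles everything.

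For the second term, $|f(x)|\le \|f\|\rho(x)^\alpha$ and (c) give $\rho^\alpha (t/\rho)^\delta(1+t/\rho)^{-N}\lesssim t^\alpha$ because $\delta>\alpha$. Hence $|Q_tf(x)|\lesssim t^\alpha\|f\|$ pointwise, and integrating over the tent gives $\int_0^r t^{2\alpha}|B|\,dt/t\sim |B|^{1+2\alpha/n}$, which is \eqref{eq:1.7-i}.

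\textbf{Part (ii).} This is the main obstacle. The plan is to reduce to Theorem~\ref{thm:main-1}(ii) by passing from the Carleson bound to the pointwise bound $\sup_x|Q_tf(x)|\lesssim t^\alpha\|\mu\|^{1/2}$. Use the semigroup property $Q_{t}f=c\,(\text{kernel of }(s^2L)^{j}e^{-s^2L})\circ Q_{s}f$ with $s\sim t$, say $t^2=2s^2$ and $Q_t=C e^{-s^2L}(s^2L)^{0}\cdots$. More usefully, write $\Theta_{(k)}(t\sqrt L)=c\,e^{-t^2L/2}\Theta_{(k)}(t\sqrt{L/2})$ and average over $s\in[t/2,t]$. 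Then
\[
|Q_tf(x)|\lesssim \sum_{m\ge0}2^{-mN}\Bigl(\frac{1}{|B(x,2^mt)|}\int_{t/2}^t\int_{B(x,2^mt)}|Q_sf|^2\frac{dy\,ds}{s}\Bigr)^{1/2},
\]
and the Carleson condition on the ball $B(x,2^mt)$ gives a bound $(2^mt)^{\alpha}$. Summing over $m$ with $N>\alpha$ gives $t^\alpha$.

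Next, pass from $Q_t$ to $\Phi(2^{-j}\sqrt L)$ by writing $\Phi(2^{-j}\sqrt L)=\Psi(2^{-j}\sqrt L)\,\Theta_{(k)}(2^{-j}\sqrt L)$ with $\Psi(\lambda)=\Phi(\lambda)\lambda^{-2k}e^{\lambda^2}$, which is smooth with compact support away from $0$ and has an $L^1$-bounded kernel. This yields \eqref{eq:finite Besov norm}, so Theorem~\ref{thm:main-1}(ii) gives $f-P_f\in\Lambda^\alpha_L$.

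It remains to show $P_f=0$. Since $f\in\mathcal F$, and since under $|\{V>0\}|\ne0$ a generalized polynomial in $\mathcal F$ with $Q_tP$ satisfying a Carleson bound must vanish (the decay $(1+t/\rho)^{-N}$ forces $Q_tP\to0$, and $L$-harmonic functions of controlled growth are zero when $V\not\equiv0$), we get $f\in\Lambda^\alpha_L$ with the stated norm bound. This last identification step is where I expect the most care to be needed.
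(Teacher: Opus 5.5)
Your plan works except at the last step, showing $P_f=0$, which is a genuine gap as written. That step is also exactly where the hypothesis $|\{V>0\}|\neq 0$ is used. Theorem~\ref{thm:main-1}(ii) only gives $P_f\in\mathcal{P}_L$, i.e.\ $L^mP_f=0$ in $\mathcal{S}_L'(\mathbb{R}^n)$ for some $m$.

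\begin{itemize}
\item A Liouville theorem for $L$-harmonic functions of polynomial growth covers $m=1$. For $m\geq 2$ you would need $L^{m-1}P_f$ to be a function of controlled growth, and nothing in your argument supplies that.
\item The remark that $\Theta_{(k)}(t\sqrt{L})P_f\to 0$ as $t\to\infty$ does not by itself force $P_f=0$. The Carleson bound on $\Theta_{(k)}(t\sqrt{L})P_f$ plays no role either.
\end{itemize}

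The paper closes the gap with two lemmas.
\begin{itemize}
\item First, $P_f=f-(f-P_f)\in\mathcal{F}(\mathbb{R}^n)$ by part (i).
\item Lemma~\ref{lem:5.2} shows $\mathcal{F}(\mathbb{R}^n)\subset\mathcal{H}_L(\mathbb{R}^n)$. It uses the factor $(\sqrt{t}/\rho(x))^{-N}$ in the bound for $\partial_t^\ell p_t(x,y)$ together with $\rho(x)<\infty$ from Lemma~\ref{lem:Vimpliesrho}.
\item Lemma~\ref{lem: size0} shows that a generalized polynomial in $\mathcal{H}_L(\mathbb{R}^n)$ vanishes. Indeed $L^mP=0$ gives $\Theta_{(m)}(t\sqrt{L})P\equiv 0$, and Propositions~\ref{prop:heat semigroup 2} and~\ref{prop:heat semigroup  3} then give $\rho(\cdot)^{-\alpha}P=0$ a.e.
\end{itemize}
Equivalently, $u(t):=e^{-tL}P_f(x)$ satisfies $u^{(m)}\equiv 0$ and $u(t)\to 0$ as $t\to\infty$. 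Hence $e^{-tL}P_f\equiv 0$, and letting $t\to 0$ gives $P_f=0$.

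Your pointwise estimate actually gives more than you use. You get $\sup_{t>0}t^{-\alpha}\|\Theta_{(k)}(t\sqrt{L})f\|_{L^\infty}\lesssim\|d\mu_{(k,f)}\|_{\mathcal{C}^\alpha}^{1/2}$ directly. So once $f\in\mathcal{H}_L(\mathbb{R}^n)$ is known (Lemma~\ref{lem:5.2}), Propositions~\ref{prop:heat semigroup 1}(ii), \ref{prop:heat semigroup 2} and~\ref{prop:heat semigroup  3} yield $f\in\Lambda^\alpha_L(\mathbb{R}^n)$ with the right bound. In that route neither Theorem~\ref{thm:main-1} nor generalized polynomials are needed.

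Apart from this, your route differs from the paper's, and in part (ii) it is more economical. The identity $\Theta_{(k)}(t\sqrt{L})=(t/s)^{2k}e^{-(t^2-s^2)L}\Theta_{(k)}(s\sqrt{L})$, averaged in $s$, is correct. It gives the pointwise bound in one stroke and replaces the paper's Steps 1--3: the inhomogeneous and homogeneous Calder\'on formulas, almost orthogonality, the $\varepsilon$-Cauchy--Schwarz argument and the covering by $\mathcal{X}_\ell$. One adjustment is needed: restrict to $s\in[t/2,t/\sqrt{2}]$ rather than $[t/2,t]$. Then $t^2-s^2\sim t^2$ and \eqref{eq:GUE} gives a Gaussian at scale $t$; near $s=t$ the heat kernel degenerates.

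Similarly, the factorization $\Phi=\Psi\,\Theta_{(k)}$ with $\Psi(\lambda)=\lambda^{-2k}e^{\lambda^2}\Phi(\lambda)\in\mathcal{A}([0,\infty))$ avoids the homogeneous reproducing formula the paper uses.

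In part (i) you re-derive the De Le\'on-Contreras--Torrea estimate through perturbation kernels. The paper just cites Proposition~\ref{prop:heat semigroup 1}(i) with Proposition~\ref{prop:heat semigroup  3}, after which integrating $t^{2\alpha}$ over the tent is immediate. If you keep your derivation, two details need fixing:
\begin{itemize}
\item For $\alpha=1$ there is no gradient. Use instead the Zygmund modulus $|f(y)-f(x)|\lesssim\|f\|_{\Lambda^1_L}|x-y|\,(1+\log(\rho(x)/|x-y|))$ for $|x-y|\le\rho(x)$.
\item $\dot{\Lambda}^\alpha$ functions need not grow like $|x|^\alpha$ when $\alpha<1$, since affine functions have vanishing second differences. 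Membership in $\mathcal{F}(\mathbb{R}^n)$ should come from $|f|\le\|f\|_{\Lambda^\alpha_L}\rho^\alpha\lesssim(1+|x|)^{\theta\alpha}$, as in the paper.
\end{itemize}
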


\begin{remark}
As noted earlier, 
Ma {\it et al.} in \cite{MSTZ} obtained a characterization of the space $\Lambda_L^\alpha(\mathbb{R}^n)$, for $0<\alpha <1$, in
terms of Carleson measures and fractional derivatives of any order of the Poisson semigroup; see \cite[Theorem 1.3]{MSTZ}. Their approach is 
 based on the following two facts (cf.  \cite[Proposition 4]{BHS-weighted} and \cite[Theorem 4.5]{MSTZ}): 
 \begin{itemize}
 \item for $0< \alpha <1$, $\Lambda^\alpha_L(\mathbb{R}^n) = BMO_L^\alpha(\mathbb{R}^n)$; 
 \item for $0< \alpha <1$, $BMO_L^\alpha(\mathbb{R}^n)$ is
 the dual of the Hardy space $H_L^{\frac{\alpha}{n +\alpha}}(\mathbb{R}^n)$.  
 \end{itemize}
 However,  for $\alpha \geq 1$, 
 the $H^{\frac{n}{n+\alpha}}_L(\mathbb{R}^n)$--$ \Lambda_L^\alpha(\mathbb{R}^n)$ duality is unclear so far.  
Thus, to prove the Carleson measure characterization of $\Lambda^\alpha_L (\mathbb{R}^n)$ for $0< \alpha < 2 -(n /q)$, we
 will use a different strategy, which relies on Calder\'{o}n reproducing formulae adapted to the operator $L$ proved 
 in \cite{KP, GKKP1}, the heat semigroup characterization of $\Lambda_L^\alpha (\mathbb{R}^n)$ established in \cite{DT},
 and suitable almost orthogonality estimates.
\end{remark}

The rest of this paper is organized as follows. In Section \ref{sect:preliminaries}, we collect some useful facts and tools, including
some properties of the critical radius function $\rho$, the inhomogeneous and homogeneous Calder\'{o}n reproducing formulae adapted to the operator $L$,
and heat semigroup characterization of $\Lambda_L^\alpha (\mathbb{R}^n)$. In Section \ref{sect:kernel estimates}, we 
present some useful kernel estimates, which play an important role in the proofs of main results. Section \ref{sect:proof of first theorem} and
Section \ref{sect:proof of second theorem} are devoted to the proofs of Theorem \ref{thm:main-1} and Theorem \ref{thm:main-2}, respectively.

\medskip
\noindent
{\it Notation.} The set of all positive integers is denoted by $\mathbb{N}$,
while the set of all nonnegative integers is denoted by $\mathbb{N}_0$. If $a \in (0,\infty)$, we denote 
$\lfloor a \rfloor : =\max\{k \in \mathbb{N}_0: k \leq a\}$. For any $a,b \in \mathbb{R}$, we denote 
$a \vee b:= \max\{a,b\}$ and $a \wedge b := \min\{a,b\}$.
The Lebesgue measure of a measurable set $E \subset \mathbb{R}^n$ is denoted by $|E|$.
Throughout, we will use $c,c',C,C'$ to denote positive constants, which are independent of the main
variables involved and whose values may vary at every occurrence.
By writing $f \lesssim g$, we mean $f \leq Cg$.
The notation $f \sim g$ will stand for $C \leq f/g \leq C'$.

\medskip

\section{Preliminaries} \label{sect:preliminaries}

\subsection{Properties of the critical radius function}
We will use the following properties of the critical radius function $\rho$.

\begin{lemma} \label{lem:prop of rho}
{\rm (i)}
 There exist constants $0< \theta <1$ and $C >0$ such that for all $x , y \in \mathbb{R}^n$,
    \begin{align*}
      \rho(y) \leq C\rho(x) \left( 1+ \frac{|x-y|}{\rho(x)}\right)^{\theta}.
    \end{align*}
    \begin{enumerate}[\rm (ii)]
        \item There exists a constant $C >0$ such that for all $\lambda >0$ and for all $x,y \in \mathbb{R}^n$ 
 with $y \in B(x,\lambda \rho(x))$,
\begin{align*}
  \rho(y) \leq C (1 + \lambda)^{\theta} \rho(x).  
\end{align*}
\end{enumerate}
\end{lemma}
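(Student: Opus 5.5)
This is Shen's lemma. I would derive it from the doubling-type properties of the function
\[
\psi(x,r):=\frac{r^2}{|B(x,r)|}\int_{B(x,r)}V, \qquad \psi(x,\rho(x))=1 .
\]
The key input is the reverse Hölder condition $V\in RH_q$ with $q>n/2$, which I would use in two ways.

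First, $V\,dy$ is a doubling measure, since $RH_q$ weights are $A_\infty$. Second, Hölder's inequality together with $RH_q$ gives, for $r<R$ and balls with the same center,
\[
\psi(x,r)\le C\left(\frac{r}{R}\right)^{2-n/q}\psi(x,R).
\]
From doubling, for $r>R$ one also gets $\psi(x,r)\le C(r/R)^{k_0}\psi(x,R)$ for some $k_0$ depending on the doubling constant. Since $2-n/q>0$, these two bounds imply that $\rho(x)$ is finite and positive, and that $\psi(x,r)\sim 1$ exactly when $r\sim\rho(x)$. I also need a comparison for nearby centers: if $|x-y|\le C\rho(x)$, then $B(y,\rho(x))$ and $B(x,\rho(x))$ are contained in comparable balls, so doubling gives $\psi(y,\rho(x))\sim 1$ and hence $\rho(y)\sim\rho(x)$.

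For (i), I treat the case $|x-y|\ge\rho(x)$; the complementary case is covered by the comparability above. Put $R=|x-y|$. Then $B(y,R)\subset B(x,2R)$, and applying the growth estimate with exponent $k_0$ from $\rho(x)$ up to $2R$ gives
\[
\psi(y,R)\le C\,\frac{R^2}{R^n}\int_{B(x,2R)}V\le C\,\psi(x,2R)\le C\left(\frac{R}{\rho(x)}\right)^{k_0}.
\]
Now let $r=\rho(y)$. If $r\le R$ there is nothing to prove. Otherwise the small-scale decay estimate at center $y$, between radii $R$ and $r$, gives
\[
1=\psi(y,r)\ge c\left(\frac{r}{R}\right)^{2-n/q}\psi(y,R).
\]
This alone does not conclude, because it bounds $r$ from the wrong side. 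The standard fix is to use a lower bound on $\psi(y,R)$ instead. Since $B(x,\rho(x))\subset B(y,2R)$,
\[
\psi(y,2R)\ge c\,\frac{R^2}{R^n}\int_{B(x,\rho(x))}V= c\left(\frac{R}{\rho(x)}\right)^{2-n}.
\]
Combining this with the decay estimate from $2R$ up to $r$ gives $(r/R)^{2-n/q}\lesssim (R/\rho(x))^{n-2}$. Hence
\[
\rho(y)\lesssim R\left(\frac{R}{\rho(x)}\right)^{\frac{n-2}{2-n/q}}=\rho(x)\left(\frac{R}{\rho(x)}\right)^{k},
\]
with $k$ the exponent this produces.

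This yields a polynomial bound with some power $k$, rather than the sharper $\theta<1$. To reach $\theta<1$, I would use the symmetric version of the same argument in the other direction: it gives a lower bound $\rho(y)\gtrsim\rho(x)(1+R/\rho(x))^{-k/(k+1)}$. Swapping the roles of $x$ and $y$ and solving the resulting self-consistent inequality for $\rho(y)$ then produces the exponent $\theta=k/(k+1)<1$. This bootstrapping step is where I expect the main obstacle. One has to handle carefully the case $\rho(y)\ge|x-y|$ versus $\rho(y)<|x-y|$, and keep the constants independent of $x$ and $y$.

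Part (ii) follows at once from (i): if $y\in B(x,\lambda\rho(x))$, then $|x-y|/\rho(x)<\lambda$, so $\rho(y)\le C\rho(x)(1+\lambda)^{\theta}$.
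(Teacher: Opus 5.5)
Your deduction of (ii) from (i) is the same as the paper's. The paper does not reprove (i); it cites Shen's Lemma 1.4(c). The problem is in your reconstruction of (i), at the step meant to produce $\theta<1$, which does not work as described.

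First, the intermediate claim $\rho(y)\gtrsim\rho(x)(1+|x-y|/\rho(x))^{-k/(k+1)}$ is false in general. Take $V(x)=|x|^2$, which lies in every $RH_q$; then $\rho(x)\sim(1+|x|)^{-1}$. With $x=0$ and $|y|=R\to\infty$ you get $\rho(y)\sim R^{-1}$, which violates any lower bound of the form $R^{-\gamma}$ with $\gamma<1$.

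Second, the claim cannot come from interchanging $x$ and $y$ in your exponent-$k$ upper bound. Since $k=1+\frac{n-2}{2-n/q}>1$, that swap gives, when $\rho(y)<R$, only $\rho(x)\rho(y)^{k-1}\lesssim R^{k}$, which is again an upper bound on $\rho(y)$.

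Third, even granting a lower bound with exponent $-\beta$, interchanging $x$ and $y$ in the regime $\rho(x)<\rho(y)<R$ gives $\rho(x)\gtrsim\rho(y)^{1+\beta}R^{-\beta}$, i.e. $\theta=\beta/(1+\beta)$. With $\beta=k/(k+1)$ this is $k/(2k+1)$, not $k/(k+1)$. So the bootstrapping, which you correctly flag as the crux, is not established.

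The missing idea is that the sublinear upper bound comes from interchanging $x$ and $y$ in a polynomial \emph{lower} bound of arbitrary order. Your exponent-$k$ upper bound is not needed at all. You already had the ingredient and set it aside as bounding things ``from the wrong side'': for $R=|x-y|\ge\rho(x)$ you showed $\psi(y,R)\lesssim(R/\rho(x))^{k_0}$.

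If $\rho(y)<R$, the decay estimate at center $y$ gives $1=\psi(y,\rho(y))\le C(\rho(y)/R)^{2-n/q}\psi(y,R)$. Hence $\rho(y)\gtrsim\rho(x)(1+R/\rho(x))^{-\beta}$ with $\beta=\frac{k_0}{2-n/q}-1$, enlarged if necessary so that $\beta>0$. If $\rho(y)\ge R\ge\rho(x)$ this bound is trivial, and if $R<\rho(x)$ it follows from local comparability.

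Now apply this lower bound with $x$ and $y$ interchanged, and split into three cases: $\rho(y)\le\rho(x)$; $\rho(x)<\rho(y)$ with $R\le\rho(y)$; and $\rho(x)<\rho(y)<R$. This yields (i) with $\theta=\beta/(1+\beta)\in(0,1)$ and constants independent of $x,y$.

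A minor point: finiteness of $\rho$ does not follow from the two scaling bounds alone. It requires $V\not\equiv0$ (cf. Lemma~\ref{lem:Vimpliesrho}); otherwise $\rho\equiv+\infty$.
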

\begin{proof}
For (i), see \cite[Lemma 1.4 (c)]{ShenAIF}. Assertion (ii) follows easily from  (i).
\end{proof}

\begin{lemma} \label{lem:Vimpliesrho}
The following two conditions are equivalent:
\begin{enumerate}[\rm (i)]
    \item $|\{x \in \mathbb{R}^n : V(x) > 0\}| \neq 0$.

    \vspace{0.15cm}
    \item $\rho(x) < +\infty$ for every $x \in \mathbb{R}^n$. 
\end{enumerate}
\end{lemma}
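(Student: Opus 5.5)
We prove the two implications separately, working directly from the definition of $\rho$:
\[
\rho(x)=\sup\Big\{r>0:\ \tfrac{r^2}{|B(x,r)|}\int_{B(x,r)}V\le 1\Big\}.
\]
It is convenient to set $\psi(x,r):=\frac{r^2}{|B(x,r)|}\int_{B(x,r)}V(y)\,dy = c_n r^{2-n}\int_{B(x,r)}V$. The whole argument reduces to understanding whether $\psi(x,r)$ eventually exceeds $1$ as $r\to\infty$.

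\emph{(ii) $\Rightarrow$ (i).} We argue by contraposition. Suppose $|\{V>0\}|=0$, so that $V=0$ a.e. Then $\int_{B(x,r)}V=0$ for every $x$ and $r$. Hence $\psi(x,r)=0\le 1$ for all $r>0$, and therefore $\rho(x)=+\infty$ for every $x$. This contradicts (ii).

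\emph{(i) $\Rightarrow$ (ii).} Fix $x\in\mathbb{R}^n$. It suffices to show that $\psi(x,r)>1$ for all sufficiently large $r$; indeed, any admissible $r$ in the supremum must then be bounded, so $\rho(x)<\infty$. The key input is the doubling property of $V\,dy$, which follows from $V\in RH_q$ (cf. \cite{ShenAIF}). Shen's Lemma 1.2 gives the quantitative form
\[
\frac{r^{2}}{|B(x,r)|}\int_{B(x,r)}V \le C\Big(\frac rR\Big)^{2-n/q}\frac{R^{2}}{|B(x,R)|}\int_{B(x,R)}V, \qquad 0<r<R.
\]
Since $2-n/q>0$, this says that $\psi(x,\cdot)$ grows at least like $R^{2-n/q}$ once it is nonzero. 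Now choose $r_0$ with $\int_{B(x,r_0)}V>0$; such an $r_0$ exists by (i), since $\{V>0\}$ has positive measure and is therefore not contained in a null set, so some large ball around $x$ carries positive mass of $V$. Then $\psi(x,r_0)>0$, and applying the inequality with $r=r_0$ gives
\[
\psi(x,R)\ge C^{-1}(R/r_0)^{2-n/q}\,\psi(x,r_0)\longrightarrow\infty \quad\text{as } R\to\infty.
\]
Thus $\psi(x,R)>1$ for $R$ large, and the set over which the supremum defining $\rho(x)$ is taken is bounded above. Here we should make sure that this set really is an initial interval; this also follows from the displayed inequality up to the constant $C$, but only boundedness of the set is needed. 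Hence $\rho(x)<\infty$.

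The main obstacle is justifying the growth inequality, namely Shen's Lemma 1.2. It comes from H\"older's inequality together with the reverse H\"older condition applied on $B(x,R)$:
\[
\int_{B(x,r)}V\le |B(x,r)|^{1-1/q}\Big(\int_{B(x,R)}V^q\Big)^{1/q}\le C\Big(\frac rR\Big)^{n(1-1/q)}\int_{B(x,R)}V.
\]
Multiplying by $r^{2-n}$ and comparing with $R^{2-n}$ yields exactly the exponent $2-n/q$. Since this is standard, we would simply cite \cite{ShenAIF}. Note also that $V\in L^q_{\rm loc}$ guarantees that all the integrals involved are finite.
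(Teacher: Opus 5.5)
Your proof is correct and follows essentially the same route as the paper: (ii)$\Rightarrow$(i) is the same trivial contrapositive, and for (i)$\Rightarrow$(ii) both arguments use Shen's growth estimate (Lemma 1.2 of \cite{ShenAIF}, exponent $2-n/q>0$) to force $\frac{R^2}{|B(x,R)|}\int_{B(x,R)}V>1$ for large $R$. The one difference is that you center the ball at an arbitrary $x$, choosing $r_0$ so that $B(x,r_0)$ meets $\{V>0\}$ in positive measure, which removes the paper's appeal to Lemma~\ref{lem:prop of rho}(i) for passing from a single point $x_0$ to all of $\mathbb{R}^n$.
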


\begin{proof}
First we show  $\textrm{(i)} \Rightarrow \textrm{(ii)}$. 
Denote $E:=\{x \in \mathbb{R}^n : V(x) > 0\}$. Since $|E| > 0$, 
there exists a ball $B(x_0,r_0) \subset \mathbb{R}^n$ such that $|B(x_0,r_0) \cap E| >0$. 
Using the fact that $V (y)>0$ for all $y \in B(x_0,r_0) \cap E$, we have
\[
\int_{B(x_0,r_0)}V(y)dy  \geq  \int_{B(x_0,r_0) \cap E}V(y)dy >0.
\]
It follows that
\[
\frac{r_0^2}{|B(x_0,r_0)|} \int_{B(x_0,r_0)} V(y)dy  =:c_0 >0.
\]
From \cite[Lemma 1.2]{ShenAIF} we see that for every $R \geq r_0$, 
\begin{align*}
\frac{R^2}{|B(x_0,R)|} \int_{B(x_0,R)} V(y)dy \geq  C \left( \frac{R}{r_0}\right)^\delta\frac{r_0^2}{|B(x_0,r_0)|} \int_{B(x_0,r_0)} V(y)dy \geq c_0 C\left( \frac{R}{r_0}\right)^\delta,
\end{align*}
where $\delta := 2- (n/q) >0$.
Hence for sufficiently large $R$, we have 
\[
\frac{R^2}{|B(x_0,R)|} \int_{B(x_0,R)} V(y)dy >1.
\]
This implies $\rho (x_0) < +\infty$. It then follows from (i) of Lemma \ref{lem:prop of rho}
that $\rho(x) < +\infty$ for all $x \in \mathbb{R}^n$.

Next we show $\textrm{(ii)} \Rightarrow \textrm{(i)}$. Assume, for the sake of a contradiction, 
that $|\{x \in \mathbb{R}^n : V(x) > 0\}| =0$, i.e., $V(x)=0$ for a.e. $x \in \mathbb{R}^n$.
Then for every $x \in \mathbb{R}^n$ and every $r >0$, we have 
\[
\frac{r^2}{|B(x,r)|} \int_{B(x,r)}V(y) dy =0 \leq 1.
\]
This implies that $\rho(x) =+\infty$, which is a contradiction with (ii).
\end{proof}

\subsection{Calder\'{o}n reproducing formulae }

We record the inhomogeneous and homogeneous Calder\'{o}n reproducing formulae adapted to the operator 
$L$, which will play an important role in our approach. 

First we recall the inhomogeneous Calder\'{o}n reproducing formula adapted to $L$. See \cite[Proposition 5.5]{KP}).
\begin{proposition}  \label{prop:prop of Calderon } 
Let $\Phi_0, \Phi \in C^\infty ([0,\infty))$ such that $\supp \Phi_0 \subset [0,2]$, $\supp \Phi \subset [1/2,2]$, and
\begin{align*}
\Phi_0(\lambda) + \sum_{j =1}^\infty \Phi(2^{-j}\lambda) =1, \quad \forall \lambda \in [0, \infty).    
\end{align*}
Then for any $f \in \mathcal{S}_L'(\mathbb{R}^n)$, we have
\begin{align*}
f = \Phi_0(\sqrt{L})f + \sum_{j =1}^\infty \Phi(2^{-j}\sqrt{L})f    
\end{align*}
with convergence in $\mathcal{S}_L'(\mathbb{R}^n)$.
\end{proposition}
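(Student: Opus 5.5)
The plan is to reduce the statement to a spectral-theoretic identity for $L$ tested against $\mathcal{S}_L(\mathbb{R}^n)$, and then transfer it by duality to $\mathcal{S}_L'(\mathbb{R}^n)$.

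\textbf{Step 1: the formula on test functions.} Take $\phi \in \mathcal{S}_L(\mathbb{R}^n)$ and set
\[
\Psi_N(\lambda) := \Phi_0(\lambda) + \sum_{j=1}^N \Phi(2^{-j}\lambda).
\]
The goal of this step is
\[
\Psi_N(\sqrt{L})\phi \to \phi \quad \text{in the topology of } \mathcal{S}_L(\mathbb{R}^n).
\]
By the partition of unity, $\Psi_N = 1$ on $[0,2^{N}]$. Hence $1-\Psi_N(\lambda)$ is supported in $[2^N,\infty)$, where $\lambda^{-2M}\le 2^{-2NM}$.

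Write
\[
\phi - \Psi_N(\sqrt L)\phi = \big[(1-\Psi_N)(\lambda)\lambda^{-2M}\big](\sqrt L)\, L^M\phi .
\]
Since $L^M\phi\in\mathcal{S}_L$, the $L^2$ norm of this difference is $O(2^{-2NM})$. The same holds after applying $L^m$ on the left, because $L^m$ commutes with the functional calculus.

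\textbf{Step 2: upgrading $L^2$ decay to the weighted sup seminorms.} To control the seminorms $\|\cdot\|_{(m,\ell)}$, I would use the kernel bounds from \cite[Proposition 5.3]{KP}. For a multiplier $h$ supported in $[2^{N-1},2^{N+1}]$ with bounded derivatives after rescaling, the kernel of $h(\sqrt L)$ has Gaussian-based decay
\[
|K(x,y)| \lesssim 2^{Nn}(1+2^N|x-y|)^{-K}.
\]
This bound gives, for each term of the difference,
\[
\sup_x (1+|x|)^\ell |L^m h(\sqrt L)\phi(x)| \lesssim 2^{-N}\, \|\phi\|_{(m',\ell')}
\]
for suitable $m'$ and $\ell'$. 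To obtain it, I would put enough powers $\lambda^{-2M}$ on the multiplier and use Peetre's inequality $(1+|x|)^\ell\le(1+|y|)^\ell(1+|x-y|)^\ell$. The tail $1-\Psi_N = \sum_{j>N}\Phi(2^{-j}\cdot)$ is then handled term by term, and the resulting geometric series is summed.

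This kernel and seminorm bookkeeping is the main obstacle. It is exactly the content of \cite[Proposition 5.5]{KP}, and it relies only on the Gaussian upper bound for the heat kernel of $L$.

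\textbf{Step 3: duality.} For $f\in\mathcal{S}_L'(\mathbb{R}^n)$, use definition \eqref{eq:Phi L f 1} together with the fact that real-valued multipliers give self-adjoint operators. This yields
\[
\Big(\Phi_0(\sqrt L)f+\sum_{j=1}^N\Phi(2^{-j}\sqrt L)f,\ \phi\Big) = \big(f,\Psi_N(\sqrt L)\phi\big).
\]
(For complex-valued $\Phi$, use $\overline{\Psi_N}$ instead.) By continuity of $f$ and Step 1, the right-hand side tends to $(f,\phi)$. This is exactly convergence of the series to $f$ in $\mathcal{S}_L'(\mathbb{R}^n)$.
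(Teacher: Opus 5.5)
The paper gives no proof of this proposition and simply cites \cite[Proposition 5.5]{KP}. Your argument is the standard proof of that cited result, so it is correct and essentially the same approach: you factor $\Phi(2^{-j}\sqrt{L})\phi = 2^{-2jM}\big[\lambda^{-2M}\Phi\big](2^{-j}\sqrt{L})L^{M}\phi$, use the kernel bound of Proposition~\ref{prop:K Phi L} together with Peetre's inequality to get geometric decay of the tail in every seminorm $\|\cdot\|_{(m,\ell)}$, and then dualize. One small correction: Step 3 needs no self-adjointness, because \eqref{eq:Phi L f 1} defines $\Phi(\sqrt{L})f$ directly by $(f,\Phi(\sqrt{L})\phi)$, so you should drop the $\overline{\Psi_N}$ caveat.
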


To introduce the homogeneous Calder\'{o}n reproducing formula adapted to $L$, we need to work with the 
factor space $\mathcal{S}_L'(\mathbb{R}^n)/\mathcal{P}_L$. For any sequence $\{f_j + \mathcal{P}_L\}_j \subset \mathcal{S}_L'(\mathbb{R}^n)/\mathcal{P}_L$, 
$f_j + \mathcal{P}_L\rightarrow f + \mathcal{P}_L$ if and only if there exist $\{P_j\}_j \subset \mathcal{P}_L$
and $P \in \mathcal{P}_L$
such that 
\[
f_j + P_j \rightarrow f + P \quad \text{in  } \mathcal{S}_L'(\mathbb{R}^n).
\]
In the following, we will usually not explicitly distinguish between $f \in \mathcal{S}_L'(\mathbb{R}^n)$ and
its equivalence class $f + \mathcal{P}_L$, and we will occasionally write $f \in \mathcal{S}_L'(\mathbb{R}^n)/\mathcal{P}_L$.

The following homogeneous Calder\'{o}n reproducing formula adapted to $L$ was established in \cite[Theorem 3.9]{GKKP1}).
\begin{proposition} \label{prop:prop of polynomial} 
Let $\Phi \in C^\infty ([0,\infty))$ such that  $\supp \Phi \subset [1/2,2]$ and
\begin{align*}
 \sum_{j\in \mathbb{Z}} \Phi(2^{-j}\lambda) =1, \quad \forall \lambda \in (0, \infty).    
\end{align*}
Then for any $f \in \mathcal{S}_L'(\mathbb{R}^n)$, we have
\begin{align*}
f = \sum_{j \in \mathbb{Z}}\Phi(2^{-j}\sqrt{L})f    
\end{align*}
with convergence in $\mathcal{S}_L'(\mathbb{R}^n)/\mathcal{P}_L$. More precisely,  
there exist a sequence of generalized polynomials $\{P_k\}_{k =1}^\infty \subset  \mathcal{P}_L$
and $P \in \mathcal{P}_L$ such that 
\begin{align*}
f  +P =\lim_{k \rightarrow \infty} \left(\sum_{j = - k}^\infty \Phi(2^{-j}\sqrt{L})f + P_k \right) \quad \text{in } \; \mathcal{S}_L'(\mathbb{R}^n).   
\end{align*}
\end{proposition}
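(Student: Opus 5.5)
The plan is to split off the high-frequency tail, which converges in $\mathcal{S}_L'(\mathbb{R}^n)$ by the inhomogeneous theory, and then to show that the remaining low-frequency part becomes a generalized polynomial in the limit. Set $\Psi(\lambda):=\sum_{j<0}\Phi(2^{-j}\lambda)$ for $\lambda>0$ and $\Psi(0):=1$. Then $\Psi\in C^\infty([0,\infty))$, $\Psi\equiv 1$ on $[0,1/2]$ and $\supp\Psi\subset[0,1]$, so $\Psi\in\mathcal{A}([0,\infty))$. Moreover, for each $k\in\mathbb{N}$,
\[
\Psi(2^{k}\lambda)+\sum_{j\ge -k}\Phi(2^{-j}\lambda)=1 .
\]
Applying Proposition~\ref{prop:prop of Calderon } with $\Phi_0:=\Psi(2^k\cdot)+\sum_{j=-k}^{0}\Phi(2^{-j}\cdot)$, we get that $\sum_{j\ge -k}\Phi(2^{-j}\sqrt L)f$ converges in $\mathcal{S}_L'(\mathbb{R}^n)$ and
\[
f=\Psi(2^{k}\sqrt L)f+\sum_{j\ge -k}\Phi(2^{-j}\sqrt L)f .
\]
Hence it suffices to find $P_k,P\in\mathcal{P}_L$ with $-\Psi(2^k\sqrt L)f+P_k\to P$ in $\mathcal{S}_L'(\mathbb{R}^n)$.

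\emph{Step 1 (decay after applying $L^m$).} I would first show that $L^m\Psi(2^k\sqrt L)f\to 0$ in $\mathcal{S}_L'(\mathbb{R}^n)$ for $m$ large. Since $f\in\mathcal{S}_L'(\mathbb{R}^n)$, it has finite order: $|(f,\phi)|\le C\sum_{m'\le M_0,\ \ell\le M_0}\|\phi\|_{(m',\ell)}$. Next, $L^m\Psi(\delta\sqrt L)=\delta^{-2m}\Psi_m(\delta\sqrt L)$ with $\Psi_m(\lambda)=\lambda^{2m}\Psi(\lambda)$. The kernel bounds $|K_{\Psi_m(\delta\sqrt L)}(x,y)|\lesssim |B(x,\delta)|^{-1}(1+|x-y|/\delta)^{-N}$ (a consequence of the Gaussian bound, as in \cite{KP}) should give $\|\Psi_m(\delta\sqrt L)\phi\|_{(m',\ell)}\lesssim \delta^{M_1}$ for $\delta\ge1$, with $M_1$ depending only on $M_0$ and not on $m$: the weight $(1+|x|)^\ell$ costs at most $\delta^\ell$. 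Taking $2m>M_1$ and $\delta=2^k$ yields the claim.

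\emph{Step 2 (passing to the quotient).} Let $\mathcal{S}_{L,\infty}:=\{\phi\in\mathcal{S}_L(\mathbb{R}^n):\ \text{for every } m \text{ there is } \psi_m\in\mathcal{S}_L(\mathbb{R}^n) \text{ with } \phi=L^m\psi_m\}$. For such $\phi$ we have $\Psi(2^k\sqrt L)\phi=L^m\Psi(2^k\sqrt L)\psi_m$. By Step 1, applied on the test-function side with $f$ fixed, this gives $(\Psi(2^k\sqrt L)f,\phi)\to0$ for every $\phi\in\mathcal{S}_{L,\infty}$. Thus the restrictions of $f-\sum_{j=-k}^{N}\Phi(2^{-j}\sqrt L)f$ to $\mathcal{S}_{L,\infty}$ tend to $0$. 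The key structural fact needed is the identification $\mathcal{S}_L'(\mathbb{R}^n)/\mathcal{P}_L\cong(\mathcal{S}_{L,\infty})'$, that is, the annihilator of $\mathcal{S}_{L,\infty}$ in $\mathcal{S}_L'(\mathbb{R}^n)$ is exactly $\mathcal{P}_L$, together with the statement that convergence on $\mathcal{S}_{L,\infty}$ lifts to convergence modulo $\mathcal{P}_L$. For the lifting, I would use Hahn--Banach on the Fréchet space $\mathcal{S}_L(\mathbb{R}^n)$ to extend the limit functional, and then pick correctors $P_k$ as differences between the extensions.

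\emph{Main obstacle.} The hardest point is precisely this duality/annihilator step: proving that any $g\in\mathcal{S}_L'(\mathbb{R}^n)$ vanishing on $\mathcal{S}_{L,\infty}$ satisfies $L^mg=0$ for some $m$. Finite order together with Step~1 should give it. Writing $\phi=\Psi(\sqrt L)\phi+(1-\Psi)(\sqrt L)\phi$, the second piece lies in $\mathcal{S}_{L,\infty}$, since $(1-\Psi)(\lambda)\lambda^{-2m}$ is smooth and of polynomial growth. This reduces $(L^mg,\phi)$ to $(g,L^m\Psi(\sqrt L)\phi)$. A scaling argument in the spirit of Step 1, applied to $g$, should then force this quantity to vanish once $2m$ exceeds the order of $g$. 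To handle the uniformity needed for the lifting, I would first try to produce the correctors explicitly, taking $P_k$ as a truncated Taylor-type expansion of $\Psi(2^k\sqrt L)f$ in the variable $L$, so that $L^m$ kills it. If that is too delicate, I would fall back on the weak-$*$ compactness and closed-range argument of \cite{GKKP1}.
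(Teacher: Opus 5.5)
The paper does not prove this proposition; it quotes \cite[Theorem 3.9]{GKKP1}. Several parts of your plan are sound:
\begin{itemize}
\item the reduction to the remainder $F_k:=\Psi(2^k\sqrt L)f$ via the inhomogeneous Calder\'{o}n formula;
\item the Step~1 bound $|(F_k,L^m\psi)|\lesssim 2^{-k(2m-M_0+n)}\sum_{m',\ell\le M_0}\|\psi\|_{(m',\ell+n+1)}$;
\item after minor repairs, the identification of the annihilator of $\mathcal{S}_{L,\infty}$ with $\mathcal{P}_L$.
\end{itemize}
But all of this only yields $(F_k,\phi)\to0$ for $\phi\in\mathcal{S}_{L,\infty}$. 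The statement, and the paper's definition of convergence in $\mathcal{S}_L'(\mathbb{R}^n)/\mathcal{P}_L$, require $P_k,P\in\mathcal{P}_L$ with $F_k-P_k\to-P$ on all of $\mathcal{S}_L(\mathbb{R}^n)$. That lifting step is the real content, and your proposal does not supply it.

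Hahn--Banach gives, for each $k$ separately, an extension of $F_k|_{\mathcal{S}_{L,\infty}}$ that is determined only modulo $\mathcal{P}_L$. Nothing forces a choice of extensions that converges off $\overline{\mathcal{S}_{L,\infty}}$. Your bound is in terms of seminorms of $\psi$ (where $\phi=L^m\psi$), not of $\phi$, so you do not even get equicontinuous extensions. Weak-$*$ compactness would then give at best subsequential limits. In general, weak-$*$ convergent sequences in the dual of a subspace need not lift in this way.

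The ``Taylor-type expansion in the variable $L$'' is not a valid substitute. A truncated expansion in powers of $L$ consists of terms $L^i(\cdot)$, and $L^m$ does not annihilate these. The classical correctors are spatial Taylor polynomials, which have no analogue for a general $L$. Falling back on \cite{GKKP1} is just citing the result.

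For the operators in this paper you can bypass the lifting by a dichotomy.

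\emph{Case $|\{V>0\}|\neq0$.} Then $\rho<\infty$ by Lemma~\ref{lem:Vimpliesrho}, and $\rho(y)\lesssim(1+|y|)^\theta$. Apply Proposition~\ref{prop:kernel phi t L} to $\lambda^{2m'}\Psi(\lambda)$ with $t=\delta\ge1$. This gives the extra kernel factor $(1+\delta/\rho(y))^{-N_2}\lesssim\delta^{-N_2}(1+|y|)^{\theta N_2}$. Your Step~1 computation with $m=0$ then shows $|(f,\Psi(\delta\sqrt L)\phi)|\le C_{f,\phi}\,\delta^{M_0-n-N_2}\to0$ for every $\phi\in\mathcal{S}_L(\mathbb{R}^n)$, once $N_2>M_0$. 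Hence $F_k\to0$ in $\mathcal{S}_L'(\mathbb{R}^n)$, and one can take $P_k=P=0$. The same argument shows $\mathcal{P}_L=\{0\}$ in this case.

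\emph{Case $V=0$ a.e.} Then $L=-\Delta$, $\mathcal{S}_L(\mathbb{R}^n)=\mathcal{S}(\mathbb{R}^n)$, and $\mathcal{P}_L$ is the space of ordinary polynomials. The classical correction provides the $P_k$: subtract the Taylor polynomials at $0$ of the entire functions $\Psi(2^k\sqrt{-\Delta})f$.
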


To construct Littlewood-Paley decompositions, we will use the following fact.  For the proof, we refer to
\cite[Lemma 3.6]{Hu}.

\begin{lemma} \label{lem:construc LP}
Suppose $\Phi_0, \Phi \in \mathcal{A}([0,\infty))$ satisfy the following conditions:
\begin{align*}
  |\Phi_0 (\lambda)| >0     \text{ on }  \{0\leq \lambda < 2\varepsilon\}
  \quad \text{and} \quad 
   |\Phi (\lambda)| >0  \text{ on }  \{\varepsilon/2 <\lambda<2\varepsilon\}
\end{align*}
for some $\varepsilon >0$. Then we have:
\begin{enumerate}[\rm (i)]
    \item There exist $\Psi_0, \Psi \in \mathcal{A}([0,\infty))$ such that $\supp \Psi_0
    \subset [0,2\varepsilon]$, $\supp \Psi \subset [\varepsilon/2,2\varepsilon]$, and 
    \[
    \Psi_0(\lambda)\Phi_0(\lambda) + \sum_{j =1}^\infty \Psi(2^{-j}\lambda) \Phi(2^{-j}\lambda) =1,\quad \forall \lambda \in [0,\infty).
    \]
\item There also exixs $\Omega \in \mathcal{A}([0,\infty))$ such that $\supp \Omega \subset [\varepsilon/2, 2\varepsilon]$ such that 
\[
\sum_{j \in \mathbb{Z}} \Omega (2^{-j}\lambda) \Phi(2^{-j}\lambda) =1, \quad \forall \lambda \in (0,\infty).
\]
\end{enumerate}

\end{lemma}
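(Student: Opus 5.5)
The plan is to build $\Omega$ first by the standard normalisation trick, and then derive $\Psi$ and $\Psi_0$ from it.

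\emph{Choice of cutoff.} Fix a small $\delta\in(0,\varepsilon/8)$. Let $\eta\in C^\infty(\mathbb{R})$ be real-valued with $\supp\eta\subset[\varepsilon/2+\delta,\,2\varepsilon-\delta]$ and $\eta>0$ on $(\varepsilon/2+\delta,\,2\varepsilon-\delta)$; for instance, take an exponential bump. Since $(2\varepsilon-\delta)/(\varepsilon/2+\delta)>2$ for $\delta$ small, the open intervals $2^{j}(\varepsilon/2+\delta,2\varepsilon-\delta)$, $j\in\mathbb{Z}$, cover $(0,\infty)$. Define
\[
S(\lambda):=\sum_{j\in\mathbb{Z}}\eta(2^{-j}\lambda)\,|\Phi(2^{-j}\lambda)|^2,\qquad \lambda>0.
\]
For $\lambda$ in a compact subset of $(0,\infty)$ only finitely many (at most three) terms are nonzero, so $S$ is smooth on $(0,\infty)$. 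Moreover $S(2\lambda)=S(\lambda)$. Finally $S>0$: every $\lambda>0$ has some $2^{-j}\lambda$ in $(\varepsilon/2+\delta,2\varepsilon-\delta)\subset(\varepsilon/2,2\varepsilon)$, where both $\eta$ and $|\Phi|$ are positive. By dilation invariance, $S$ is bounded below on $(0,\infty)$.

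\emph{Part (ii).} Set $\Omega(\lambda):=\eta(\lambda)\overline{\Phi(\lambda)}/S(\lambda)$ for $\lambda>0$ and $\Omega(0)=0$; the conjugate allows for complex-valued $\Phi$.
\begin{itemize}
\item $\Omega$ is smooth, and $\supp\Omega\subset[\varepsilon/2+\delta,2\varepsilon-\delta]$ is a compact subset of $(0,\infty)$. Extending it by zero and evenly to $\mathbb{R}$ gives an even Schwartz function, so $\Omega\in\mathcal{A}([0,\infty))$.
\item Using $S(2^{-j}\lambda)=S(\lambda)$,
\[
\sum_{j\in\mathbb{Z}}\Omega(2^{-j}\lambda)\Phi(2^{-j}\lambda)=\frac{1}{S(\lambda)}\sum_{j\in\mathbb{Z}}\eta(2^{-j}\lambda)|\Phi(2^{-j}\lambda)|^2=1 \quad\text{for all } \lambda>0.
\]
\end{itemize}

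\emph{Part (i).} Take $\Psi:=\Omega$ and set
\[
G(\lambda):=1-\sum_{j\ge1}\Omega(2^{-j}\lambda)\Phi(2^{-j}\lambda),
\]
which is a locally finite sum and hence smooth on $[0,\infty)$.
\begin{itemize}
\item For $\lambda<2(\varepsilon/2+\delta)$, i.e.\ in particular on $[0,\varepsilon]$, every term with $j\ge1$ vanishes, so $G\equiv1$ there.
\item For $\lambda>0$, part (ii) gives $G(\lambda)=\sum_{j\le0}\Omega(2^{-j}\lambda)\Phi(2^{-j}\lambda)$. This vanishes once $\lambda\ge 2\varepsilon-\delta$, because then $2^{-j}\lambda\ge\lambda\ge2\varepsilon-\delta$ for all $j\le0$. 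Hence $\supp G\subset[0,2\varepsilon-\delta]$.
\end{itemize}
Define $\Psi_0:=G/\Phi_0$ on $[0,2\varepsilon-\delta]$ and $\Psi_0:=0$ beyond. Since $\Phi_0$ is continuous and nonvanishing on the compact interval $[0,2\varepsilon-\delta]\subset[0,2\varepsilon)$, it is bounded below there, so $\Psi_0$ is smooth. The shrinking of the support of $\eta$ by $\delta$ is exactly what prevents a singularity at $2\varepsilon$, where $\Phi_0$ may vanish. By construction $\Psi_0\Phi_0+\sum_{j\ge1}\Psi(2^{-j}\cdot)\Phi(2^{-j}\cdot)=1$ on $[0,\infty)$.

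\emph{Main obstacle: $\Psi_0\in\mathcal{A}([0,\infty))$.} The only delicate point is the even Schwartz extension of $\Psi_0$ near $0$. There $G\equiv1$, so $\Psi_0=1/\Phi_0$ near $0$. Since $\Phi_0$ extends to an even smooth function that is nonzero near $0$, $1/\Phi_0$ is even and smooth near $0$. Combined with the compact support of $\Psi_0$, this shows its even extension is smooth and compactly supported, hence Schwartz, so $\Psi_0\in\mathcal{A}([0,\infty))$.
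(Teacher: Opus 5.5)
Your proof is correct and complete. The paper gives no argument of its own for this lemma; it only cites \cite[Lemma 3.6]{Hu}. Your construction therefore supplies what the paper outsources: you normalise by the dilation-invariant function $S$ to get $\Omega$, then take $\Psi=\Omega$ and $\Psi_0=G/\Phi_0$. The choice of $\delta$ with $\delta<\varepsilon/3$ gives two things. First, the dilates of $(\varepsilon/2+\delta,2\varepsilon-\delta)$ cover $(0,\infty)$, so $S>0$. Second, $G\equiv1$ near $0$ and $G\equiv0$ on $[2\varepsilon-\delta,\infty)$, so $\Psi_0\in\mathcal{A}([0,\infty))$. The conjugate in $\eta\overline{\Phi}/S$ correctly covers complex-valued $\Phi$.

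An equally short alternative avoids $S$ altogether:
\begin{itemize}
\item Fix $\gamma_0\in C^\infty([0,\infty))$ with $\gamma_0\equiv1$ on $[0,\varepsilon+\delta]$ and $\supp\gamma_0\subset[0,2\varepsilon-\delta]$.
\item Set $\gamma:=\gamma_0-\gamma_0(2\,\cdot)$. Then $\supp\gamma\subset[(\varepsilon+\delta)/2,\,2\varepsilon-\delta]$, and telescoping gives both $\gamma_0+\sum_{j\ge1}\gamma(2^{-j}\cdot)=1$ and $\sum_{j\in\mathbb{Z}}\gamma(2^{-j}\cdot)=1$.
\item Divide: $\Psi_0=\gamma_0/\Phi_0$ and $\Psi=\Omega=\gamma/\Phi$.
\end{itemize}
Your route builds (ii) first and derives (i) from it; this one gets both from a single partition of unity. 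Both are fully elementary.

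One sentence should be tightened. That $|\Phi_0|$ is bounded below on the closed interval $[0,2\varepsilon-\delta]$ is not what makes the piecewise-defined $\Psi_0$ smooth at the junction point $2\varepsilon-\delta$. The right reason has three parts:
\begin{itemize}
\item $\Phi_0\neq0$ on $[0,2\varepsilon)$, which is a neighbourhood of $[0,2\varepsilon-\delta]$ in $[0,\infty)$, so $G/\Phi_0$ is smooth on all of $[0,2\varepsilon)$.
\item $G/\Phi_0$ vanishes identically on $[2\varepsilon-\delta,2\varepsilon)$.
\item Hence it glues smoothly with the zero function on $(2\varepsilon-\delta,\infty)$.
\end{itemize}
Your remark on why $\delta$ is needed shows you have this in mind; just state it this way.
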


\subsection{Heat semigroup characterization of $\Lambda_{L}^\alpha (\mathbb{R}^n)$}

In this subsection we review the heat semigroup characterization of $\Lambda_{L}^\alpha (\mathbb{R}^n)$ 
established by De Le\'{o}n-Contreras and Torrea \cite{DT}. First, we recall the following definition.

\begin{definition} (\cite{DT})
We say that a measurable function $f$ on $\mathbb{R}^n$ satisfies the \textit{heat size condition for}  $L$ if  
\begin{itemize}
    \item[(i)] $\int_{\mathbb{R}^n} e^{-\frac{|x|}{t}} |f(x)|dx <\infty$ for every
$t>0$;

\medskip

\item[(ii)]  $\displaystyle\lim_{t \rightarrow \infty}\partial_t^\ell e^{-tL}f(x) =0$ for every $\ell \in \mathbb{N}_0$ and every $x \in \mathbb{R}^n$.
\end{itemize}
We denote
\begin{align}
\mathcal{H}_L(\mathbb{R}^n):= \big\{f: f \text{ is measurable } \text{and 
satisfies the heat size condition for } L\big\}.
\end{align}
\end{definition}

 Recall that for each $k \in \mathbb{N}$, 
 $\Theta_{(k)}(\cdot)$ is a function on $[0,\infty)$ defined by 
\begin{align} \label{eq:def of Theta 22}
\Theta_{(k)} (\lambda):=  \lambda^{2k}e^{-\lambda^2}, \quad \lambda \in [0,\infty).
\end{align} 
 Then for any $f \in \mathcal{H}_L(\mathbb{R}^n)$, we have
 \begin{align*}
 \partial_t^k e^{-tL}f=(-L)^ke^{-tL}f  =(-1)^k t^{-k}(tL)^k e^{-tL}f = (-1)^k t^{-k} \Theta_{(k)} (\sqrt{tL})f.
 \end{align*}
Hence, by changing variables $t = s^2$, we have  
 \begin{equation} \label{eq:equal}
 \begin{split}
\sup_{t>0} t^{k- ({\alpha}/{2})} \|\partial_t^k e^{-tL} f\|_{L^\infty} & = 
\sup_{t>0} t^{- ({\alpha}/{2})} \| \Theta_{(k)} (\sqrt{tL})f\|_{L^\infty}  \\
&  = \sup_{s>0} s^{- \alpha} \| \Theta_{(k)} (s\sqrt{L})f\|_{L^\infty} .
 \end{split}
 \end{equation}

\medskip
 With the above notation, we can state the 
 heat semigroup characterization 
of $\Lambda_L^\alpha(\mathbb{R}^n)$ obtained in \cite{DT} as follows.

\begin{proposition} \label{prop:heat semigroup 1} {\rm (\cite[Theorem 1.5]{DT})}
Assume that $|\{x\in \mathbb{R}^n: V(x) >0\}| \neq 0$.
Let $0< \alpha < 2 -(n/q)$ and $k:=\lfloor \alpha/2\rfloor +1$.  
\begin{enumerate}[\rm (i)]
    \item If $f \in \Lambda_{L}^\alpha (\mathbb{R}^n)$, then $f \in \mathcal{H}_L(\mathbb{R}^n)$  and
     $ \sup_{t>0} t^{- \alpha} \| \Theta_{(k)} (t\sqrt{L})f\|_{L^\infty}  < \infty$.
    Furthermore,  
    there exists a constant $C>0$  such that for all $f \in \Lambda_L^\alpha (\mathbb{R}^n)$, 
    \[
    \|\rho(\cdot)^{-\alpha}f(\cdot)\|_{L^\infty} + 
   \sup_{t>0} t^{- \alpha} \| \Theta_{(k)} (t\sqrt{L})f\|_{L^\infty} \leq
    C \|f\|_{\Lambda_{L}^\alpha}.
    \]
    \item 
    If $f \in \mathcal{H}_L(\mathbb{R}^n)$ and 
    $\sup_{t>0} t^{- \alpha} \| \Theta_{(k)} (t\sqrt{L})f\|_{L^\infty}  < \infty$, then 
    $f \in \Lambda_L^\alpha (\mathbb{R}^n)$. Furthermore, 
    there exists a constant $C'>0$ such that for all $f \in \mathcal{H}_L(\mathbb{R}^n)$,
    \[
    \|f\|_{\Lambda^\alpha_L} \leq C ' \Big(\|\rho(\cdot)^{-\alpha}f(\cdot) \|_{L^\infty}+ \sup_{t>0} t^{- \alpha} \| \Theta_{(k)} (t\sqrt{L})f\|_{L^\infty} \Big) .
    \]
\end{enumerate}
\end{proposition}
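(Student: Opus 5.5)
The statement is quoted from \cite[Theorem 1.5]{DT}. If I had to reprove it, I would compare $\Theta_{(k)}(t\sqrt{L})$ with its classical counterpart $\Theta_{(k)}(t\sqrt{-\Delta})$ at scales $t<\rho(x)$, and use the decay of the $L$-kernels at scales $t\geq\rho(x)$. The kernel inputs, which I expect to be the content of Section~\ref{sect:kernel estimates}, are as follows. Let $h_t(x,y)$ be the kernel of $e^{-tL}$, let $\delta:=2-n/q$, let $N$ be arbitrary, and let $\ell\geq 0$. Then:
\begin{itemize}
\item (Gaussian bound with critical-radius decay) $|\partial_t^\ell h_t(x,y)|\lesssim t^{-\ell-n/2}e^{-c|x-y|^2/t}(1+\sqrt t/\rho(x))^{-N}$;
\item (perturbation bound) the kernel of $\partial_t^\ell(e^{-tL}-e^{t\Delta})$ is bounded by $t^{-\ell-n/2}(\sqrt t/\rho(x))^{\delta}e^{-c|x-y|^2/t}$ for $\sqrt t\le\rho(x)$, obtained from the Duhamel formula and $V\in RH_q$;
\item (H\"older regularity in $x$) the analogous bound with exponent $\delta'$, for any $\delta'<\delta$.
\end{itemize}

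\emph{Part (i).} By Lemma~\ref{lem:prop of rho}, $\rho(y)\lesssim\rho(0)(1+|y|/\rho(0))^{\theta}$. Hence $|f(y)|\le\|f\|_{\Lambda^\alpha_L}\rho(y)^\alpha$ grows at most polynomially, which gives condition (i) of the heat size condition. For condition (ii), the factor $(1+\sqrt t/\rho(x))^{-N}$ with $N$ large beats the polynomial growth of $f$, so $\partial_t^\ell e^{-tL}f(x)\to0$.

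For the main bound fix $x$ and split according to $t$ versus $\rho(x)$.
\begin{itemize}
\item If $t\ge\rho(x)$: use only $|f(y)|\lesssim\rho(y)^\alpha$ and the critical-radius decay. This gives $|\Theta_{(k)}(t\sqrt L)f(x)|\lesssim\rho(x)^\alpha(t/\rho(x))^{\alpha\theta}(1+t/\rho(x))^{-N}\lesssim t^\alpha$.
\item If $t<\rho(x)$: write $\Theta_{(k)}(t\sqrt L)=\Theta_{(k)}(t\sqrt{-\Delta})+R_t$.
\end{itemize}

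For the classical piece, its kernel is even and has vanishing integral (since $k\ge1$). So it pairs with $\tfrac12\big(f(x+z)+f(x-z)-2f(x)\big)$ and gives $\lesssim t^\alpha$.

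For $R_t$, expand
\[
f(y)=f(x)+\nabla f(x)\cdot(y-x)+r(x,y),
\]
using only the first two terms when $\alpha<1$. The terms are handled as follows.
\begin{itemize}
\item Constant term: it is bounded by $|f(x)|(t/\rho)^{\delta}\lesssim\rho^{\alpha}(t/\rho)^\delta\le t^\alpha$, since $\alpha<\delta$ and $t<\rho$.
\item Linear term: for $1\le\alpha$ one has $|\nabla f(x)|\lesssim\rho(x)^{\alpha-1}$. This follows from the size condition combined with second differences at scale $\rho(x)$. The linear term is then $\lesssim t\,\rho^{\alpha-1}(t/\rho)^\delta\le t^\alpha$.
\item Remainder: $|r(x,y)|\lesssim|x-y|^\alpha$ for $\alpha$ in either range, so it contributes $t^\alpha(t/\rho)^\delta$.
\end{itemize}

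I expect the main obstacle to be this regime $1\le\alpha<\delta$. Here the non-symmetric perturbation kernel forces the gradient estimate, and the restriction $\alpha<2-n/q$ is used in an essential way.

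\emph{Part (ii).} I would use the reproducing identity
\[
f=\frac{1}{\Gamma(k)}\int_0^\infty\Theta_{(k)}(t\sqrt L)f\,\frac{2dt}{t}.
\]
The endpoint $t\to\infty$ is justified by the heat size condition. The endpoint $t\to0$ follows from $e^{-t L}f\to f$ a.e.

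The size term $\|\rho^{-\alpha}f\|_{L^\infty}$ is already assumed on the right-hand side. So it remains to bound the second differences $\Delta_z^2f(x)$. Split the $t$-integral at $t=|z|$.
\begin{itemize}
\item For $t<|z|$: bound each of the three terms by $t^\alpha$ and integrate, which gives $\lesssim|z|^\alpha$.
\item For $t\ge|z|$: the integrand is itself a second difference. Writing $\Theta_{(k)}(t\sqrt L)=\Theta_{(k)}(\tfrac t{\sqrt2}\sqrt L)\,e^{-t^2L/2}$ (up to a constant), the difference falls on the heat kernel in $x$. The H\"older regularity of order $\delta'\in(\alpha,\delta)$ then gives a bound $(|z|/t)^{\delta'}t^\alpha$. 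Integrating $\int_{|z|}^\infty$ converges precisely because $\alpha<\delta'$, and again yields $|z|^\alpha$.
\end{itemize}
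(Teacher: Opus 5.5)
The paper gives no proof of this proposition. It is quoted from \cite[Theorem 1.5]{DT}, and the paper's only addition is the remark that $|\{V>0\}|\neq 0$ is needed. That hypothesis gives $\rho<\infty$ everywhere (Lemma~\ref{lem:Vimpliesrho}), hence genuine decay of $(1+\sqrt t/\rho(x))^{-N}$, which is what yields the heat size condition. Your argument for $f\in\mathcal{H}_L(\mathbb{R}^n)$ relies on exactly this, so say so explicitly. Note also that $k=1$ throughout this range of $\alpha$. Neither your perturbation bound nor your regularity bound is in Section~\ref{sect:kernel estimates}; both must be imported from the literature. Your architecture is reasonable, but three steps need repair.

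\emph{Part (i) at $\alpha=1$.} This value is admissible whenever $q>n$, and there your linear-term step fails. The bounds $|\nabla f(x)|\lesssim \rho(x)^{\alpha-1}$ and $|r(x,y)|\lesssim |x-y|^{\alpha}$ together would make $f$ Lipschitz. But $\Lambda^1_L(\mathbb{R}^n)$ contains compactly supported Zygmund functions such as $\eta(x)\sum_{j\geq1}2^{-j}\cos(2^jx_1)$ with $\eta\in C_c^\infty(\mathbb{R}^n)$, which are nowhere differentiable on $\{\eta\neq 0\}$.

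In fact no Taylor expansion is needed for any $\alpha$. The kernel of $R_t$ already carries the gain $(t/\rho(x))^{\delta}$. So for $t<\rho(x)$ you can use only the size condition and Lemma~\ref{lem:prop of rho}, namely $|f(y)|\lesssim \|f\|_{\Lambda^\alpha_L}\rho(x)^\alpha(1+|x-y|/t)^{\theta\alpha}$, to get
\[
|R_tf(x)|\lesssim \Big(\frac{t}{\rho(x)}\Big)^{\delta}\rho(x)^{\alpha}\|f\|_{\Lambda^\alpha_L}
= t^{\alpha}\Big(\frac{t}{\rho(x)}\Big)^{\delta-\alpha}\|f\|_{\Lambda^\alpha_L}\leq t^{\alpha}\|f\|_{\Lambda^\alpha_L}.
\]
The asymmetry of $R_t$ therefore costs nothing. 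The restriction $\alpha<2-n/q$ enters only through $\delta-\alpha>0$, and the ``main obstacle'' you anticipate does not arise.

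\emph{Part (ii).} First, the conclusion $f\in\Lambda^\alpha_L(\mathbb{R}^n)$ requires $\rho(\cdot)^{-\alpha}f\in L^\infty$, which is not a hypothesis. Observing that this term sits on the right-hand side only handles the inequality. You need Proposition~\ref{prop:heat semigroup 2}, or you can argue directly. Set $A:=\sup_{s>0}s^{-\alpha}\|\Theta_{(1)}(s\sqrt{L})f\|_{L^\infty}$. The factorization $\Theta_{(1)}(t\sqrt L)=2\,\Theta_{(1)}(\tfrac{t}{\sqrt2}\sqrt L)\,e^{-t^2L/2}$ and Proposition~\ref{prop:heat kernel} give $|\Theta_{(1)}(t\sqrt L)f(x)|\lesssim (1+t/\rho(x))^{-N}t^{\alpha}A$. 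Splitting your reproducing integral at $t=\rho(x)$ then gives $|f(x)|\lesssim \rho(x)^{\alpha}A$.

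Second, your regularity input is misstated exactly where it matters. For $\alpha\geq 1$ you need $\delta'>1$, and a first-difference H\"older bound of order $\delta'>1$ would force $h_s(\cdot,y)$ to be constant. What the argument actually uses is the second-difference estimate
\[
|h_s(x+z,y)+h_s(x-z,y)-2h_s(x,y)|\lesssim \Big(\frac{|z|}{\sqrt s}\Big)^{\delta'} s^{-n/2}e^{-c|x-y|^2/s},\qquad |z|\lesssim\sqrt s,\quad \alpha<\delta'<2-\frac{n}{q}.
\]
For $\delta'<1$ this follows from first-difference H\"older regularity. For $\delta'>1$, which is possible only when $q>n$, it is a genuinely second-order, $C^{1,\gamma}$-type bound. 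It can be obtained from Duhamel's formula with the second difference falling on the Gauss kernel. With these corrections your outline of (ii) goes through.
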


\begin{remark}
Although the condition $|\{x\in \mathbb{R}^n: V(x) >0\}| \neq 0$ is not explicitly 
claimed in \cite[Theorem 1.5]{DT}, it is actually needed. Indeed, to prove that 
\begin{align} \label{eq:imply}
\rho(\cdot)^{-\alpha}f(\cdot) \in L^\infty(\mathbb{R}^n) \Longrightarrow f \in \mathcal{H}_L(\mathbb{R}^n),
\end{align}
the authors in \cite{DT} establish the following estimate (see \cite[Lemma~2.6]{DT})
\begin{align} \label{eq:2.6}
|\partial_t^\ell e^{-tL}f(x)| \leq C_\ell\|\rho(\cdot)^{-\alpha}f(\cdot)\|_{L^\infty}  \frac{\rho(x)^\alpha}{t^\ell}
\left(1 + \frac{\sqrt{t}}{\rho(x)} \right)^{-M},
\end{align}
and then claim that using this estimate one can deduce \eqref{eq:imply} (see \cite[Proposition 2.7]{DT}). 
However, it is worth noting that 
if $|\{x\in \mathbb{R}^n: V(x) >0\}| = 0$, i.e., $V(x) =0$ for a.e. $x \in \mathbb{R}^n$, then obviously
$\rho \equiv + \infty$, and hence \eqref{eq:2.6} is trivial, and from \eqref{eq:2.6} we can not 
deduce the implication \eqref{eq:imply}. 
\end{remark}

The next result shows that 
$\|\rho(\cdot)^{-\alpha}f(\cdot)\|_{L^\infty}$ is controlled by $\sup_{t>0} t^{- \alpha} \| \Theta_{(k)} (t\sqrt{L})f\|_{L^\infty}$, provided that $f \in \mathcal{H}_L(\mathbb{R}^n)$.

\begin{proposition} \label{prop:heat semigroup 2} {\rm (\cite[Proposition 2.9]{DT})}
Let $\alpha >0$, $k:= \lfloor \alpha /2\rfloor +1$ and  $f \in \mathcal{H}_L(\mathbb{R}^n)$.
Then $ \sup_{t>0} t^{- \alpha} \| \Theta_{(k)} (t\sqrt{L})f\|_{L^\infty}  <\infty$ implies $\rho(\cdot)^{-\alpha}f(\cdot) \in L^\infty (\mathbb{R}^n)$; 
furthermore, there exists a constant $C>0$ such that for all  $f \in \mathcal{H}_L(\mathbb{R}^n)$,
\[
\|\rho(\cdot)^{-\alpha}f(\cdot)\|_{L^\infty} \leq C \sup_{t>0} t^{- \alpha} \| \Theta_{(k)} (t\sqrt{L})f\|_{L^\infty} .
\]
\end{proposition}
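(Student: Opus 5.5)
We would prove the bound pointwise at almost every $x$ by reconstructing $f(x)$ from the heat semigroup. The idea is to integrate $t$-derivatives down from $t=\infty$, which is where the heat size condition enters. Throughout, $A:=\sup_{t>0}t^{-\alpha}\|\Theta_{(k)}(t\sqrt L)f\|_{L^\infty}$. By \eqref{eq:equal}, $A$ equals $\sup_{s>0}s^{k-\alpha/2}\|\partial_s^k e^{-sL}f\|_{L^\infty}$. Fix $x$ and set $g_j(s):=\partial_s^j e^{-sL}f(x)$ for $0\le j\le k$. Note that $\rho(x)<\infty$ is needed for the conclusion to be meaningful; when $\rho\equiv\infty$ the estimate is trivial.

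\emph{Step 1 (extra decay for the top derivative).} We first upgrade the trivial bound $|g_k(s)|\le A s^{\alpha/2-k}$ to
\[
|g_k(s)|\le C A\, s^{\alpha/2-k}\Big(1+\frac{\sqrt s}{\rho(x)}\Big)^{-N}
\]
for an arbitrarily large $N$. To do so, write $\partial_s^k e^{-sL}f=e^{-(s/2)L}\big(\partial_u^k e^{-uL}f\big)|_{u=s/2}$ and bound the inner factor by $A(s/2)^{\alpha/2-k}$ in $L^\infty$. Then use the standard Schr\"odinger heat kernel estimate (Kurata / Dziubański--Zienkiewicz):
\[
0\le h_t(x,y)\le C t^{-n/2}e^{-c|x-y|^2/t}\Big(1+\frac{\sqrt t}{\rho(x)}\Big)^{-N},
\]
which I expect to be available in Section \ref{sect:kernel estimates}. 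Integrating in $y$ gives $\int h_{s/2}(x,y)\,dy\le C(1+\sqrt s/\rho(x))^{-N}$, and Step 1 follows.

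\emph{Step 2 (downward induction).} By the heat size condition (ii), $g_j(s)\to0$ as $s\to\infty$ for every $j$. Hence $g_{j}(s)=-\int_s^\infty g_{j+1}(u)\,du$, and we prove by downward induction on $j$ that
\[
|g_j(s)|\lesssim A\,\min\{s^{\alpha/2-j},\rho(x)^{\alpha-2j}\}\Big(1+\frac{\sqrt s}{\rho(x)}\Big)^{-N'}
\]
(with a slightly smaller exponent $N'$ at each stage). The integral from $s$ to $\infty$ splits into the ranges $u<\rho(x)^2$ and $u\ge\rho(x)^2$. On $u\ge\rho(x)^2$ the large-$N$ decay gives convergence and a bound by $\rho(x)^{\alpha-2j}(\sqrt s/\rho(x))^{-N'}$. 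On $u<\rho(x)^2$ we get $\lesssim s^{\alpha/2-j}$ when $\alpha/2-j<0$. When $\alpha/2-j>0$, which happens for $j=0$ (since $\alpha>0$), we get $\lesssim\rho(x)^{\alpha-2j}$ instead. Since $k=\lfloor\alpha/2\rfloor+1$, the exponent $\alpha/2-j$ is never $0$ for $1\le j\le k$, so no logarithms appear. At $j=0$ this yields $|e^{-sL}f(x)|\le CA\rho(x)^\alpha$ uniformly in $s>0$.

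\emph{Step 3 (letting $s\to0$).} Finally, $e^{-sL}f(x)\to f(x)$ as $s\to0$ for a.e.\ $x$. Here the kernel satisfies $0\le h_s\le$ Gaussian, and $(e^{-s\Delta}-e^{-sL})f(x)\to0$ by the perturbation formula (or by the Lebesgue-point argument with the Gaussian majorant). The heat size condition (i) guarantees $f$ is locally integrable with enough decay for these integrals to make sense. This gives $|f(x)|\le CA\rho(x)^\alpha$ a.e.

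The main obstacle is Step 1: the whole argument depends on the $\rho$-decay of the heat kernel. Without it, the integral from $s$ to $\infty$ at the level $j=0$ diverges, since $s^{\alpha/2-1}$ is not integrable at infinity. A secondary delicate point is justifying the a.e.\ convergence in Step 3 for $f$ that is merely in $\mathcal H_L(\mathbb R^n)$.
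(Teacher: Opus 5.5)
The paper gives no proof of this proposition; it is quoted from \cite[Proposition 2.9]{DT}. Your strategy is sound. Step 1 is correct, and the estimate it needs is exactly Proposition \ref{prop:heat kernel}. Integrating down from $s=\infty$ via condition (ii), then letting $s\to0$, is the right mechanism.

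The bookkeeping in Step 2, however, is wrong for $\alpha\ge2$.
\begin{itemize}
\item The displayed induction bound needs $\max$ in place of $\min$. With $\min$, the case $j=0$ would give $|e^{-sL}f(x)|\lesssim A s^{\alpha/2}\to0$ for $s<\rho(x)^2$, i.e.\ $f=0$ a.e.
\item More substantively, since $k=\lfloor\alpha/2\rfloor+1$, you have $\alpha/2-j\ge\alpha/2-\lfloor\alpha/2\rfloor\ge0$ for \emph{every} $0\le j\le k-1$. So the branch $\alpha/2-j<0$ never occurs in an induction step, and the branch $\alpha/2-j>0$ is not special to $j=0$.
\item When $\alpha$ is an even integer, the exponent is exactly $0$ at $j=k-1=\alpha/2\ge1$. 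There, integrating $|g_k(u)|\lesssim Au^{-1}(1+\sqrt u/\rho(x))^{-N}$ only yields $|g_{k-1}(s)|\lesssim A\bigl(1+\log^+(\rho(x)^2/s)\bigr)$. So the assertion that no logarithms appear is false, and the bound you claim for $g_{k-1}$ does not follow.
\end{itemize}

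This is repairable, since the logarithm is integrable at $0$ and at least one more integration follows because $k-1\ge1$. The cleanest fix avoids stagewise power bounds entirely. Downward induction on $j$, using $g_j(T)\to0$ at each step together with Fubini, gives Cauchy's formula for repeated integrals:
\[
e^{-sL}f(x)=\frac{(-1)^k}{(k-1)!}\int_s^\infty (u-s)^{k-1}g_k(u)\,du .
\]
This converges absolutely by Step 1 once $N>\alpha$. Hence, for every $s>0$,
\[
|e^{-sL}f(x)|\le\frac{CA}{(k-1)!}\int_0^\infty u^{\frac{\alpha}{2}-1}\Big(1+\frac{\sqrt u}{\rho(x)}\Big)^{-N}du=C'A\,\rho(x)^{\alpha},
\]
with no case distinction in $\alpha$. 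For $0<\alpha<2$, the only range in which the paper uses this proposition, $k=1$ and your single integration is already correct.

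In Step 3, the Gaussian majorant alone does not give $e^{-sL}f(x)\to f(x)$, because $\int p_s(x,y)\,dy<1$ when $V\not\equiv0$. You also need $1-\int p_s(x,y)\,dy\to0$, which is what the perturbation (Duhamel) formula provides. Use that route rather than the alternative in parentheses.
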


The following result shows that we may replace the integer $k$ in Propositions \ref{prop:heat semigroup 1} and
\ref{prop:heat semigroup 2} by 
any integer $m \geq k$. 
\begin{proposition} \label{prop:heat semigroup  3}{\rm (\cite[Proposition 2.4]{DT})}
Let $\alpha >0$ and $k:= \lfloor \alpha /2\rfloor +1$. Let $m$ be an arbitrary integer such that $m \geq k$.
Then there exists a positive constant $C$ such that for all  $f \in \mathcal{H}_L(\mathbb{R}^n)$,
 \begin{align*}
C^{-1}  \sup_{t>0} t^{- \alpha} \| \Theta_{(k)} (t\sqrt{L})f\|_{L^\infty}  
\leq  \sup_{t>0} t^{- \alpha} \| \Theta_{(m)} (t\sqrt{L})f\|_{L^\infty} 
\leq C \sup_{t>0} t^{- \alpha} \| \Theta_{(k)} (t\sqrt{L})f\|_{L^\infty} .
\end{align*}
\end{proposition}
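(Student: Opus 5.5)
By \eqref{eq:equal}, it suffices to compare the quantities
\[
A_j(f):=\sup_{t>0} t^{j-(\alpha/2)}\|\partial_t^j e^{-tL}f\|_{L^\infty}, \qquad j=k,m,
\]
and prove $C^{-1}A_k(f)\le A_m(f)\le C A_k(f)$. The two inequalities are handled differently. The upper bound uses the semigroup property together with uniform $L^1$ bounds on the kernels of $(sL)^je^{-sL}$. The lower bound uses integration in $t$ from $\infty$, which is where the heat size condition enters.

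\emph{Step 1: $A_m(f)\le C A_k(f)$.} Write $\partial_t^m e^{-tL}f=(-L)^{m-k}e^{-\frac t2 L}\,\big(\partial_s^k e^{-sL}f\big)\big|_{s=t/2}$. The heat kernel of $L$ satisfies the Gaussian upper bound \eqref{eq:GUE} since $V\ge 0$. By the standard analyticity argument (Cauchy formula in a sector), the kernel of $(tL)^{j}e^{-tL}$ is then bounded by $C_j t^{-n/2}e^{-c|x-y|^2/t}$. Hence $\|(tL)^{j}e^{-tL}\|_{L^\infty\to L^\infty}\le C_j$ uniformly in $t>0$. Applying this with $j=m-k$ at time $t/2$ gives
\[
\|\partial_t^m e^{-tL}f\|_{L^\infty}\le C\,t^{-(m-k)}\|\partial_s^k e^{-sL}f|_{s=t/2}\|_{L^\infty}\le C\,t^{-(m-k)}(t/2)^{(\alpha/2)-k}A_k(f),
\]
which is the claim. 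The kernel representation is legitimate for $f\in\mathcal H_L$ because of the Gaussian decay and condition (i).

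\emph{Step 2: $A_k(f)\le C A_m(f)$.} It suffices to go from $j+1$ to $j$ for $k\le j<m$ and then iterate. Fix $x$ and set $u(t)=e^{-tL}f(x)$. By condition (i) and the kernel bounds, $u$ is smooth in $t$, and its derivatives may be taken under the integral. By condition (ii), $\partial_s^{j}u(s)\to 0$ as $s\to\infty$. Therefore
\[
\partial_t^{j}u(t)=-\int_t^\infty \partial_s^{j+1}u(s)\,ds,\qquad |\partial_t^{j}u(t)|\le A_{j+1}(f)\int_t^\infty s^{(\alpha/2)-j-1}ds=\frac{A_{j+1}(f)}{j-\alpha/2}\,t^{(\alpha/2)-j}.
\]
The integral converges precisely because $j\ge k=\lfloor\alpha/2\rfloor+1>\alpha/2$. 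Taking the supremum over $x$ and $t$ gives $A_j\le C A_{j+1}$, and iterating from $j=m-1$ down to $j=k$ yields $A_k\le C A_m$.

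The main point requiring care is Step 2. One must justify that $t\mapsto e^{-tL}f(x)$ is $C^\infty$, that differentiation commutes with the kernel integral for merely $e^{-|x|/t}$-integrable $f$, and that the boundary term at infinity vanishes. The last of these is exactly what condition (ii) of $\mathcal H_L$ provides, and without it the estimate could fail by an additive generalized polynomial. The smoothness and differentiation under the integral follow from the Gaussian bounds on $\partial_t^j$ of the heat kernel combined with condition (i), via dominated convergence.
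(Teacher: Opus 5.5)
Your argument is correct. The paper does not prove this proposition itself; it quotes it from \cite[Proposition 2.4]{DT}. Your two steps are the standard proof of that result: the factorization $(-L)^m e^{-tL}=(-L)^{m-k}e^{-\frac t2 L}\,(-L)^k e^{-\frac t2 L}$ with uniform $L^\infty$ bounds for $(sL)^{j}e^{-sL}$ gives the upper bound, and writing $\partial_t^j e^{-tL}f=-\int_t^\infty\partial_s^{j+1}e^{-sL}f\,ds$ and iterating gives the lower bound. The lower bound uses condition (ii) of $\mathcal{H}_L$ and $j\ge k>\alpha/2$, and you correctly identify both as the points where the hypotheses enter.

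One detail should be added. For fixed $s$, $\|\partial_s^{j+1}e^{-sL}f\|_{L^\infty}$ controls $|\partial_s^{j+1}e^{-sL}f(x)|$ only for a.e.\ $x$. So integrating in $s$ at a fixed $x$ needs one of two things: continuity in $x$ of the heat-kernel derivatives, or a Fubini argument showing the bound holds for a.e.\ $(x,s)$. Either one still yields the desired $L^\infty$ estimate.
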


\section{Kernel estimates} \label{sect:kernel estimates}

 When restricted to $C_0^\infty(\mathbb{R}^n)$, the operator $L =-\Delta +V$ is formally self-adjoint.
 It is well known that (see, e.g., \cite{Simon}) the closure of $L|_{C_0^\infty(\mathbb{R}^n)}$ is the unique self-adjoint extension 
 of $L|_{C^\infty_0(\mathbb{R}^n)}$; we denote this extension also by the symbol $L$.

Let $p_t(x,y)$ be the heat kernel, i.e., the integral kernel of the semigroup $e^{-tL}$ generalized 
by $L$.  Since the potential $V$ is nonnegative, it follows from the Feynman-Kac formula that 
$p_t (x,y)$ has a Gaussian upper bound:
\begin{align} \label{eq:GUE}
0 \leq p_t (x,y) \leq (4\pi t)^{- \frac{n}{2}}\exp \left(-\frac{|x-y|^2}{4t}\right).    
\end{align}
When $V$ belongs to the reverse H\"{o}lder
class $RH_q$ for some $q > n/2$, the estimate \eqref{eq:GUE} can be improved 
as follows; see \cite[Theorem 2.11]{DZ1} and \cite[Theorem 1]{Kurata}.

\begin{proposition} \label{prop:heat kernel}
For any $N >0$, there exist positive constants $c$ and $C=C_N$ such that for all $t >0$ and $x,y 
\in \mathbb{R}^n$, 
\begin{align*}
0 \leq p_t(x, y) \leq  C t^{-\frac{n}{2}}\exp\left(-\frac{|x-y|^2}{ct}\right)\left(1+\frac{\sqrt{t}}{\rho(x)}+\frac{\sqrt{t}}{\rho(y)}\right)^{-N}.
\end{align*}
\end{proposition}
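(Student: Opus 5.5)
The plan is to reduce the estimate to a decay bound for the total mass $e^{-tL}1(x)=\int p_t(x,z)\,dz$, and then recover the Gaussian factor by interpolating with \eqref{eq:GUE}.

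\textbf{Reductions.} First, the symmetry $p_t(x,y)=p_t(y,x)$ together with Lemma \ref{lem:prop of rho} reduces matters to a one-sided factor.
\begin{itemize}
\item If $|x-y|\le \sqrt t$, then $\rho(y)\lesssim \rho(x)(1+\sqrt t/\rho(x))^{\theta}$. Since $\theta<1$, this gives $1+\sqrt t/\rho(y)\gtrsim (1+\sqrt t/\rho(x))^{1-\theta}$, so the two factors are comparable up to a power.
\item If $|x-y|>\sqrt t$, any polynomial loss in $|x-y|/\sqrt t$ is absorbed by shrinking the constant in the Gaussian.
\end{itemize}
Hence, since $N$ is arbitrary, it suffices to prove
\[
p_t(x,y)\le C_N t^{-n/2}e^{-|x-y|^2/(ct)}\big(1+\sqrt t/\rho(x)\big)^{-N}.
\]
Second, I will get the Gaussian for free. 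Write $p_t(x,y)=\int p_{t/2}(x,z)p_{t/2}(z,y)\,dz$ and bound $p_{t/2}(z,y)\le Ct^{-n/2}$ by \eqref{eq:GUE}. This gives
\[
p_t(x,y)\le Ct^{-n/2}\,e^{-tL}1(x),\qquad e^{-tL}1(x)=\int p_t(x,z)\,dz,
\]
with $e^{-tL}1(x)$ taken at time $t/2$. Taking the geometric mean of this bound with \eqref{eq:GUE} then produces $e^{-|x-y|^2/(8t)}$ times the square root of the decay of $e^{-tL}1(x)$. So the whole proposition reduces to
\[
e^{-tL}1(x)\le C_N\big(1+\sqrt t/\rho(x)\big)^{-N}.
\]

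\textbf{Decay of $e^{-tL}1$.} By the Feynman--Kac formula, $e^{-tL}1(x)=\mathbb E_x\exp\big(-\int_0^t V(b_s)\,ds\big)$, where $b$ is Brownian motion with generator $\Delta$. The core local estimate is that there is $\varepsilon>0$ such that for every $z$, with $r=\rho(z)$,
\[
\mathbb E_z\exp\Big(-\int_0^{r^2}V(b_s)\,ds\Big)\le 1-\varepsilon .
\]
Two inputs give this.
\begin{itemize}
\item \emph{Lower bound on the mean.} The free heat kernel is bounded below on $B(z,r)$ at times $s\in[r^2/2,r^2]$. Therefore $\mathbb E_z\int_0^{r^2}V(b_s)\,ds\gtrsim r^{2-n}\int_{B(z,r)}V\sim 1$, by the definition of $\rho$ and the doubling property of $V\,dx$ for $V\in RH_q$.
\item \emph{Upper bound on the second moment.} The Kato-type bound $\sup_w \mathbb E_w\int_0^{r^2}V(b_s)\,ds\le C$ holds for $r\lesssim\rho$. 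It follows from $RH_q$ with $q>n/2$, via Shen's estimate $r^{2-n}\int_{B(w,r)}V\lesssim (r/\rho(w))^{2-n/q}$ for $r\le\rho(w)$, after summing over dyadic annuli. With the Markov property this bounds the second moment of $X=\int_0^{r^2}V(b_s)\,ds$.
\end{itemize}
A Paley--Zygmund argument then gives $\mathbb P(X\ge c)\ge c'$, and hence the displayed bound.

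\textbf{Iteration.} I iterate with the Markov property over roughly $t/\rho(x)^2$ consecutive time blocks. On the event that the path stays in $B(x,R)$ with $R=\sqrt t\,(1+\sqrt t/\rho(x))^{\eta}$ for a small $\eta$, Lemma \ref{lem:prop of rho} gives $\rho\lesssim\rho(x)(1+\sqrt t/\rho(x))^{\theta\eta}$ along the path. The number of blocks is then still a positive power of $1+\sqrt t/\rho(x)$, so the product $(1-\varepsilon)^{\#\text{blocks}}$ decays faster than any power. The complementary event, that the path leaves $B(x,R)$, has probability $\le Ce^{-cR^2/t}$, which also decays faster than any power of $1+\sqrt t/\rho(x)$.

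\textbf{Expected obstacle.} The main difficulty is this last step: keeping the local estimate uniform while $\rho$ varies along the path. The time blocks must be chosen adapted to the largest $\rho$ in the ball, with block length $\sup_{B(x,R)}\rho^2$, and one must check that the count of blocks still grows polynomially. That is exactly where $\theta<1$ is used.
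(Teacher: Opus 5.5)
The paper does not prove this proposition. It quotes it from \cite[Theorem 2.11]{DZ1} and \cite[Theorem 1]{Kurata} and uses it as a black box. Your proposal is a genuinely different, self-contained route, and its architecture is sound.

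\textbf{Why the outline works.} The reduction to $e^{-tL}1(x)\le C_N(1+\sqrt t/\rho(x))^{-N}$ is correct: it goes through $p_t(x,y)\le Ct^{-n/2}e^{-(t/2)L}1(x)$ and a geometric mean with \eqref{eq:GUE}. The Feynman--Kac scheme is also correct: a uniform local bound $\mathbb{E}_z e^{-\int_0^{\rho(z)^2}V(b_s)\,ds}\le 1-\varepsilon$ from Paley--Zygmund, then a Markov iteration with block length $T=\sup_{B(x,R)}\rho^2$, plus a Gaussian exit estimate. That block length is legitimate because $V\ge 0$ makes the local bound hold for every $T\ge\rho(z)^2$.

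\textbf{What each route buys.} Your route shows exactly how the hypotheses enter. $RH_q$ with $q>n/2$ is used only through three facts: the normalization $\rho(z)^{2-n}\int_{B(z,\rho(z))}V=1$, doubling, and Shen's scaling bound $r^{2-n}\int_{B(w,r)}V\lesssim (r/\rho(w))^{2-n/q}$ for $r\le\rho(w)$. The condition $\theta<1$ is what makes the number of blocks grow. The citation buys brevity and generality, since Kurata's theorem also covers magnetic Schr\"odinger and uniformly elliptic operators.

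\textbf{A simplification.} Your first reduction can be shortened. From the one-sided bound and symmetry, $\min\{u,v\}\le\sqrt{uv}$ and $(1+a)(1+b)\ge 1+a+b$ give the two-sided factor with exponent $N/2$. This is exactly how the proof of Proposition \ref{prop:kernel phi t L} proceeds, and no case analysis via Lemma \ref{lem:prop of rho} is needed.

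\textbf{Two steps need repair, neither fatal.}

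First, the exponent in the iteration is off. With $\lambda=\sqrt t/\rho(x)$ and $R=\sqrt t\,(1+\lambda)^{\eta}$ you have $R/\rho(x)\approx\lambda^{1+\eta}$. So Lemma \ref{lem:prop of rho}(i) gives $\rho\lesssim\rho(x)(1+\lambda)^{\theta(1+\eta)}$ on $B(x,R)$, not $(1+\lambda)^{\theta\eta}$. The block count is then $\gtrsim\lambda^{2-2\theta(1+\eta)}$, which is a positive power only for $\eta<(1-\theta)/\theta$. That is where $\theta<1$ is really needed; with the exponent you wrote, $\theta<1$ would play no role.

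Second, the Kato bound is not uniform over all $w$. With $r=\rho(z)$ fixed, $\sup_w\mathbb{E}_w\int_0^{r^2}V(b_s)\,ds\le C$ fails as a supremum over all $w\in\mathbb{R}^n$; for $V=|x|^2$ the expectation equals $r^2|w|^2+nr^4$. Yet your Markov-property computation of $\mathbb{E}_zX^2$ evaluates it at $w=b_s$, which can be anywhere. The fix is to localize:
\begin{itemize}
\item Apply Paley--Zygmund to $X'=\int_0^{r^2}(V\mathbf{1}_{B(z,r)})(b_s)\,ds\le X$. Your lower bound for the mean used only $B(z,r)$, so it still applies.
\item Then $\mathbb{E}_zX'^2\le 2S^2$, where $S=\sup_{w\in B(z,r)}\mathbb{E}_w\int_0^{r^2}(V\mathbf{1}_{B(z,r)})(b_s)\,ds\lesssim\sup_{w\in B(z,r)}\int_{B(w,2r)}V(u)|w-u|^{2-n}\,du$.
\item This is bounded uniformly in $z$ by Shen's estimate and doubling, because $\rho(w)\gtrsim\rho(z)$ for $w\in B(z,\rho(z))$. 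That comparison follows from Lemma \ref{lem:prop of rho}(i) with the roles of $z$ and $w$ swapped, again using $\theta<1$.
\end{itemize}
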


\medskip

Recall that $\mathcal{A}([0,\infty))$ is defined by \eqref{eq:def of A}, and note that 
for any $\Phi \in \mathcal{A}([0,\infty))$, 
\[
\Phi^{(2\nu +1)} (0) =0 \quad \text{for every } \nu \in \mathbb{N}_0. 
\]
Based on the Gaussian upper bound of the heat kernel $p_t(x,y)$, the following kernel estimates 
can be deduced by smooth functional calculus; see \cite[Lemma 2.3]{CD} and \cite[Theorem 3.4]{KP}.

\begin{proposition} \label{prop:K Phi L}
Let $\Phi \in \mathcal{A}([0,\infty))$. Then for every $t>0$, the operator 
 $\Phi(t \sqrt{L})$ is an integral operator; furthermore, for any $N>0$, there exists a constant $C =C_N>0$
 such that for all $t >0$ and $x, y\in \mathbb{R}^n$,
\begin{align*}
\big|K_{\Phi(t \sqrt{L})}(x, y)\big| \leq C t^{-n}\left(1+\frac{|x-y|}{t}\right)^{-N},
\end{align*}
where $K_{\Phi(t\sqrt{L})}(\cdot, \cdot)$ denotes the integral kernel of the operator $\Phi(t\sqrt{L})$. 
\end{proposition}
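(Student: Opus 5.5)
The plan is to derive the kernel bound from finite propagation speed of the wave equation for $L$, which follows from the Gaussian upper bound \eqref{eq:GUE}, and to write $\Phi(t\sqrt{L})$ as a superposition of cosine operators. Since $\Phi$ extends to an even Schwartz function on $\mathbb{R}$, Fourier inversion gives
\[
\Phi(t\sqrt{L}) = \frac{1}{2\pi}\int_{\mathbb{R}} \widehat{\Phi}(s)\cos(st\sqrt{L})\,ds ,
\]
with $\widehat{\Phi}$ even and Schwartz. By the Davies--Gaffney / finite propagation speed principle (valid because $p_t$ obeys the Gaussian bound \eqref{eq:GUE}, cf. \cite{CD}), the kernel of $\cos(\tau\sqrt{L})$ is supported in $\{|x-y|\le \tau\}$.

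First I reduce to a scale-invariant statement. Replacing $\Phi$ by $\Phi(t\,\cdot)$ is only a dilation, and the Gaussian bound is uniform in $t$. So it suffices to prove, for a general even Schwartz $\Phi$, that
\[
\bigl|K_{\Phi(t\sqrt{L})}(x,y)\bigr| \lesssim t^{-n}\bigl(1+|x-y|/t\bigr)^{-N},
\]
with constants depending only on finitely many Schwartz seminorms of $\Phi$.

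Fix $x,y$ and put $r=|x-y|$. Choose an even cutoff $\eta\in C^\infty(\mathbb{R})$ with $\eta=1$ on $[-1/2,1/2]$ and $\eta=0$ outside $[-1,1]$. Split $\widehat{\Phi}=\widehat{\Phi}\,\eta(\cdot\, t/r)+\widehat{\Phi}\,(1-\eta(\cdot\, t/r))$, which gives $\Phi=\Phi_1+\Phi_2$. The first piece involves only $\cos(st\sqrt{L})$ with $|st|\le r$. Its kernel is therefore supported in $|x-y|\le r$, with equality only on a boundary set. Arranging the cutoff slightly inside, say $\eta=0$ outside $[-1/2,1/2]$ and $\eta=1$ on $[-1/4,1/4]$, makes $K_{\Phi_1(t\sqrt{L})}(x,y)=0$. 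For the second piece, $\widehat{\Phi_2}$ is supported in $|s|\ge r/(4t)$. Since $\widehat{\Phi}$ is Schwartz, $\Phi_2$ together with $\lambda^{2m}\Phi_2$ has $L^1$ and sup norms of size $\lesssim (1+r/t)^{-N}$; these norms involve $\widehat{\Phi_2}$ and its derivatives, and the derivatives of the cutoff only contribute bounded factors $t/r\lesssim 1$ when $r\ge t$.

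It then remains to bound the kernel of $\Phi_2(t\sqrt{L})$ pointwise by $t^{-n}$ times those norms. I would factor
\[
\Phi_2(t\sqrt{L})=e^{-t^2L/2}\,\bigl[e^{t^2\lambda^2}\Phi_2(t\lambda)\bigr]_{\lambda=\sqrt{L}}\,e^{-t^2L/2}.
\]
The heat factors map $L^1\to L^2$ and $L^2\to L^\infty$ with norm $\lesssim t^{-n/2}$ by \eqref{eq:GUE}, and the middle factor is bounded on $L^2$ by its sup norm, giving $t^{-n}\|e^{\lambda^2}\Phi_2\|_\infty$. The weight $e^{\lambda^2}$ does not sit well with plain Schwartz decay, so instead I would use $(1+t^2L)^{-M}$ factors. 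The $L^1\to L^2$ norm of $(1+t^2L)^{-M}$ is $\lesssim t^{-n/2}$ for $M>n/4$, via subordination to the heat kernel. The middle factor is then $(1+\lambda^2)^{2M}\Phi_2$, whose sup norm is controlled as above.

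The main obstacle is exactly this step: getting decay in $r/t$ of the sup norm of the rapidly truncated function uniformly, while keeping the $L^1\to L^\infty$ bound at the sharp $t^{-n}$. I would follow \cite[Lemma 2.3]{CD} / \cite[Theorem 3.4]{KP} for the precise bookkeeping. The case $r\le t$ is immediate from the same factorization with no splitting, which gives $|K|\lesssim t^{-n}$.
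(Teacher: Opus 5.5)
Your argument is correct, and it is the standard finite-propagation-speed proof: the cosine representation of $\Phi(t\sqrt{L})$, a cutoff of $\widehat{\Phi}$ at frequency $\sim |x-y|/t$, and an $L^1\to L^\infty$ bound through $(1+t^2L)^{-M}$ factors. That is the proof in \cite{CD} and \cite{KP}, to which the paper simply defers without giving a proof of its own. The step you flag as the main obstacle is already closed by your resolvent factorization, and the dependence of the cutoff on $|x-y|$ is handled by freezing $r$ on dyadic annuli $|x-y|\in[2^m t,2^{m+1}t)$, since the kernels are only defined almost everywhere.
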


If we assume further that $\Phi$ has compact support and vanishes near the origin, then we can improve the estimate of the kernel $K_{\Phi(t\sqrt{L})}(\cdot, \cdot)$ as follows.
\begin{proposition} \label{prop:kernel phi t L}
Let $\Phi \in \mathcal{A}([0, \infty))$ such that $\supp \Phi \subset [0,R]$ for some $R>0$. 
Then for any $N_1,N_2 >0$, there exists a constant $C=C_{N_1,N_2} >0$ such that
\begin{align} \label{eq:kernel es}
\big| K_{\Phi(t\sqrt{L})}(x,y)\big| \leq C t^{-n}\left(1 + \frac{|x-y|}{t} \right)^{-N_1}\left(1+\frac{t}{\rho(x)} +\frac{t}{\rho(y)}\right)^{-N_2}.
\end{align}
\end{proposition}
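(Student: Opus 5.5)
The plan is to combine the two bounds we already have: the size and decay bound of Proposition \ref{prop:K Phi L}, and the factor $(1+t/\rho)^{-N}$ coming from the heat kernel, Proposition \ref{prop:heat kernel}. If $t$ is bounded relative to $\rho(x)$ or $\rho(y)$, say $t\lesssim \rho(x)$, then $(1+t/\rho(x)+t/\rho(y))^{-N_2}$ is comparable to a power of $(1+t/\rho(y))$. Using Lemma \ref{lem:prop of rho}(i), we have $\rho(y)\lesssim \rho(x)(1+|x-y|/\rho(x))^\theta$. So the missing factor costs at most a power of $(1+|x-y|/t)$, and the $N_1$ decay absorbs it. The real content is therefore the regime $t\gg\rho(x)$ (or $t\gg\rho(y)$), where we must gain $(t/\rho(x))^{-N_2}$.

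To get this gain, I write $\Phi(\lambda)=\Psi(\lambda)e^{-\lambda^2}$ with $\Psi(\lambda)=\Phi(\lambda)e^{\lambda^2}$. Because $\Phi$ is supported in $[0,R]$ and extends evenly, $\Psi$ is again in $\mathcal{A}([0,\infty))$. Then
\[
K_{\Phi(t\sqrt L)}(x,y)=\int K_{\Psi(t\sqrt L)}(x,z)\,p_{t^2}(z,y)\,dz .
\]
Better, to make both variables symmetric, I split as $\Phi(\lambda)=e^{-\lambda^2/2}\,\Psi_1(\lambda)\,e^{-\lambda^2/2}$ with $\Psi_1(\lambda)=\Phi(\lambda)e^{\lambda^2}$. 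This gives
\[
K_{\Phi(t\sqrt L)}(x,y)=\iint p_{t^2/2}(x,u)\,K_{\Psi_1(t\sqrt L)}(u,v)\,p_{t^2/2}(v,y)\,du\,dv .
\]
Proposition \ref{prop:heat kernel} applied with $s=t^2/2$ gives a factor $(1+t/\rho(x))^{-N}$ from the left heat kernel, since $\sqrt{s}\sim t$. It gives $(1+t/\rho(y))^{-N}$ from the right one. Each heat kernel also carries Gaussian decay in $|x-u|/t$ and $|v-y|/t$, and Proposition \ref{prop:K Phi L} gives $t^{-n}(1+|u-v|/t)^{-N}$ for the middle kernel. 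Integrating in $u$ and $v$ is a standard convolution of three bumps at scale $t$. It yields $t^{-n}(1+|x-y|/t)^{-N_1}$ times $(1+t/\rho(x))^{-N}(1+t/\rho(y))^{-N}$. Finally, $(1+a)^{-N}(1+b)^{-N}\le (1+a+b)^{-N}$, which is exactly \eqref{eq:kernel es}.

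The main obstacle is justifying the factorization. We need $\Psi_1\in\mathcal{A}([0,\infty))$, which holds since $\Phi$ is even-extendable Schwartz with compact support, so $\Phi e^{\lambda^2}$ is smooth, even and compactly supported. We also need the operator identity $\Phi(t\sqrt L)=e^{-t^2L/2}\Psi_1(t\sqrt L)e^{-t^2L/2}$, which follows from the spectral theorem, and the kernel composition via Fubini, which is justified by the absolute bounds above. The convolution estimate itself is routine: split according to whether $|x-u|$, $|u-v|$ or $|v-y|$ is at least $|x-y|/3$, and use $\int t^{-n}(1+|z|/t)^{-N}dz\lesssim 1$ for $N>n$.
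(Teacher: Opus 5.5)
Your proof is correct and is essentially the paper's argument: both write $\Phi(\lambda)=e^{-\lambda^2}\cdot\Phi(\lambda)e^{\lambda^2}$, use the compact support to get $\Phi(\lambda)e^{\lambda^2}\in\mathcal{A}([0,\infty))$, and take the $\rho$-decay from Proposition \ref{prop:heat kernel}. The only difference is how the two factors are obtained. You split the Gaussian into two half-time heat kernels and get the $\rho(x)$ and $\rho(y)$ factors at once, at the cost of a three-fold convolution. The paper instead writes $\Phi(t\sqrt L)=e^{-t^2L}\Psi(t\sqrt L)$, obtains the factor $(1+t/\rho(x))^{-2N_2}$, transfers it to $y$ by self-adjointness, and takes the geometric mean of the two bounds; your opening case analysis on the size of $t$ relative to $\rho$ is not needed.
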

\begin{proof}
Define a function 
$\Psi (\lambda) : = e^{\lambda^2} \Phi(\lambda)$, $\lambda \in [0,\infty)$. Then $\Phi(t \sqrt{L})=e^{-t^2 L}\Psi(t \sqrt{L}) $, and hence
\begin{align} \label{eq:KXY}
K_{{{\Phi}}(t \sqrt{L})}(x, y)=\int_{\mathbb{R}^n} p_t(x, z) K_{\Psi(t \sqrt{L})}(z, y) d z.
\end{align}
By Proposition \ref{prop:heat kernel},   we have
\begin{align} \label{eq:estim-1}
\left|p_t(x, z)\right| &\lesssim t^{-n} \exp\left(-\frac{|x-z|^2}{c t^2}\right)\left(1+\frac{t}{\rho(x)}+\frac{t}{\rho(z)}\right)^{-2N_2}  \nonumber\\
& \lesssim t^{-n}\left(1+\frac{|x-z|}{t}\right)^{-\left(N_1+n+1\right)}\left(1+\frac{t}{\rho(x)}\right)^{-2N_2}.
\end{align}
Meanwhile, since $\Phi \in \mathcal{A}([0,\infty))$ and $\supp \Phi \subset [0,R]$, we have $\Psi \in \mathcal{A}([0,\infty))$, and hence by Proposition \ref{prop:K Phi L}
\begin{align} \label{eq:estim-2}
\big|K_{\Psi(t \sqrt{L})}(z, y)\big| \lesssim t^{-n}\left(1+\frac{|z-y|}{t}\right)^{-N_1} .
\end{align}
Inserting \eqref{eq:estim-1} and \eqref{eq:estim-2} 
into \eqref{eq:KXY}, and using the elementary inequality 
\begin{align*}
\left(1+\frac{|x-z|}{t}\right)^{-N_1}\left(1+\frac{|z-y|}{t}\right)^{-N_1} \leq\left(1+\frac{|x-y|}{t}\right)^{-N_1},
\end{align*}
we obtain
{\small \begin{align} \label{eq: est Kxy}
 \big| K_{\Phi(t\sqrt{L})}(x,y)\big|   
 & \lesssim t^{-n}\left(1+\frac{|x-y|}{t}\right)^{-N_1}\left(1+\frac{t}{\rho(x)}\right)^{-2N_2}\int_{\mathbb{R}^n} t^{-n}\left(1+\frac{|x-z|}{t}\right)^{-(n+1)} d z \nonumber \\
& \lesssim t^{-n}\left(1+\frac{|x-y|}{t}\right)^{-N_1}\left(1+\frac{t}{\rho(x)}\right)^{-2N_2}.
\end{align}
Since} $\Phi(t\sqrt{L})$ is a self-adjoint operator, \eqref{eq: est Kxy} also implies that
\begin{align} \label{eq:est Kxy 2}
\big| K_{\Phi(t\sqrt{L})}(x,y)\big|  =\big| K_{\Phi(t\sqrt{L})}(y,x)\big|  \lesssim 
 t^{-n}\left(1+\frac{|y-x|}{t}\right)^{-N_1}\left(1+\frac{t}{\rho(y)}\right)^{-2N_2}.
\end{align}
Combining \eqref{eq: est Kxy} and \eqref{eq:est Kxy 2} gives that
\begin{align*}
\big| K_{\Phi(t\sqrt{L})}(x,y)\big|^2 &\lesssim t^{-2n} \left(1+\frac{|x-y|}{t}\right)^{-2N_1}\left(1+\frac{t}{\rho(x)}\right)^{-2N_2}
 \left(1+\frac{t}{\rho(y)}\right)^{-2N_2} \\
 & \leq t^{-2n} \left(1+\frac{|x-y|}{t}\right)^{-2N_1}\left(1+\frac{t}{\rho(x)} +\frac{t}{\rho(y)}\right)^{-2N_2}.
\end{align*}
Consequently, 
\begin{align*}
\big| K_{\Phi(t\sqrt{L})}(x,y)\big| \lesssim
t^{-n} \left(1+\frac{|x-y|}{t}\right)^{-N_1}\left(1+\frac{t}{\rho(x)} +\frac{t}{\rho(y)}\right)^{-N_2},
\end{align*}
as desired.
\end{proof}

Using the kernel estimate given in Proposition \ref{prop:K Phi L}, one may derive the following almost orthogonality estimate. See, e.g., \cite[Lemma 3.8]{BDL}.

\begin{proposition} \label{prop:AOE}
Let $M\in \mathbb{N}$ and $\Phi, \Psi \in \mathcal{A}([0,\infty))$ such that $(\cdot)^{-2M} \Psi (\cdot) \in
\mathcal{A}([0,\infty))$.  
Then for any $N >0$, there exists $C >0$ such that 
for all $t \geq s >0$ and all $x,y \in \mathbb{R}^n$, 
\begin{align*}
\big| K_{\Phi(t\sqrt{L})\Psi(s\sqrt{L})}(x,y) \big| \leq  C \left(\frac{s}{t} \right)^{2M} t^{-n}   \left(1+ \frac{|x-y|}{t} \right)^{-N}. 
\end{align*}
In particular, if  $\Phi, \Psi \in \mathcal{A}([0,\infty))$ and both of them vanish near the origin, then
for any $M \in \mathbb{N}$ and $N >0$,  there exists $C >0$ such that 
for all $t, s >0$ and all $x,y \in \mathbb{R}^n$, 
\begin{align*}
 \big| K_{\Phi(t\sqrt{L})\Psi(s\sqrt{L})}(x,y) \big| \leq  C \left(\frac{s}{t} \wedge \frac{t}{s}\right)^{2M} (s\vee t) \left(1+ \frac{|x-y|}{s\vee t} \right)^{-N}.   
\end{align*}
\end{proposition}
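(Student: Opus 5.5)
We reduce the composed operator to a product of two single-scale operators, each covered by Proposition~\ref{prop:K Phi L}, by moving a factor $(s/t)^{2M}$ out through the spectral calculus. Then we estimate the kernel of the composition as the integral of the product of the two kernels. Note that in the second display the factor $(s\vee t)$ should read $(s\vee t)^{-n}$, and we prove it in that form.

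\emph{Step 1 (factorization).} Set $\widetilde\Phi(\lambda):=\lambda^{2M}\Phi(\lambda)$ and $\widetilde\Psi(\lambda):=\lambda^{-2M}\Psi(\lambda)$. If $\Phi$ extends to an even Schwartz function on $\mathbb{R}$, so does $\lambda^{2M}\Phi(\lambda)$, because $\lambda^{2M}$ is an even polynomial. Hence $\widetilde\Phi\in\mathcal{A}([0,\infty))$, and $\widetilde\Psi\in\mathcal{A}([0,\infty))$ by hypothesis. For every $\lambda\geq 0$,
\[
\Phi(t\lambda)\Psi(s\lambda)=\Big(\frac{s}{t}\Big)^{2M}\widetilde\Phi(t\lambda)\,\widetilde\Psi(s\lambda).
\]
By the spectral theorem, $\Phi(t\sqrt L)\Psi(s\sqrt L)=(s/t)^{2M}\,\widetilde\Phi(t\sqrt L)\,\widetilde\Psi(s\sqrt L)$. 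Both factors are integral operators, so
\[
K_{\Phi(t\sqrt{L})\Psi(s\sqrt{L})}(x,y)=\Big(\frac{s}{t}\Big)^{2M}\int_{\mathbb{R}^n}K_{\widetilde\Phi(t\sqrt L)}(x,z)\,K_{\widetilde\Psi(s\sqrt L)}(z,y)\,dz .
\]

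\emph{Step 2 (kernel convolution, $s\le t$).} Apply Proposition~\ref{prop:K Phi L} with decay exponent $N$ to the first kernel and with exponent $N+n+1$ to the second. It then suffices to show
\[
\int_{\mathbb{R}^n} t^{-n}\Big(1+\frac{|x-z|}{t}\Big)^{-N}s^{-n}\Big(1+\frac{|z-y|}{s}\Big)^{-N-n-1}dz\lesssim t^{-n}\Big(1+\frac{|x-y|}{t}\Big)^{-N}.
\]
Split the integral according to which of $|x-z|$, $|z-y|$ is at least $|x-y|/2$.

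\begin{itemize}
\item[(a)] On the region $|x-z|\ge |x-y|/2$, bound the first factor by $2^N t^{-n}(1+|x-y|/t)^{-N}$. The remaining integral $\int s^{-n}(1+|z-y|/s)^{-n-1}dz$ is $O(1)$.
\item[(b)] On the region $|z-y|\ge|x-y|/2$, use $s\le t$ to get
\[
\Big(1+\frac{|z-y|}{s}\Big)^{-N}\le\Big(1+\frac{|x-y|}{2t}\Big)^{-N}.
\]
Bound the first factor crudely by $t^{-n}$. Again $\int s^{-n}(1+|z-y|/s)^{-n-1}dz=O(1)$.
\end{itemize}

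This gives the first estimate.

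\emph{Step 3 (the symmetric statement).} Suppose $\Phi,\Psi\in\mathcal{A}([0,\infty))$ both vanish near the origin. Then $\lambda^{-2M}\Psi(\lambda)$ and $\lambda^{-2M}\Phi(\lambda)$ are smooth, and their even extensions are Schwartz, for every $M$. So they lie in $\mathcal{A}([0,\infty))$.
\begin{itemize}
\item For $t\ge s$, apply Steps 1--2 directly.
\item For $t<s$, interchange the roles of $\Phi,t$ and $\Psi,s$. The operators commute, since both are functions of $L$. This yields the factor $(t/s)^{2M}$ together with $s^{-n}(1+|x-y|/s)^{-N}$.
\end{itemize}
Combining the two cases gives the bound with $(s/t\wedge t/s)^{2M}$ and scale $s\vee t$.

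The main point requiring care is Step 2. The decay must be measured at the larger scale $t$; this is why we take extra decay $n+1$ in the $s$-kernel and use $s\le t$ to convert $|z-y|/s$ into $|x-y|/t$. The membership checks in Step 1 are routine.
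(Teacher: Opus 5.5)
Your proof is correct and is essentially the argument the paper has in mind. The paper gives no proof of its own, only remarking that the estimate follows from Proposition~\ref{prop:K Phi L} and citing \cite[Lemma 3.8]{BDL}; your factorization $\Phi(t\lambda)\Psi(s\lambda)=(s/t)^{2M}\widetilde\Phi(t\lambda)\widetilde\Psi(s\lambda)$, followed by the two-region convolution estimate with extra decay $n+1$ on the finer-scale kernel and the commutation swap for $t<s$, is exactly that standard derivation. You are also right that the factor $(s\vee t)$ in the second display should read $(s\vee t)^{-n}$, which is how the estimate is actually used in Sections~\ref{sect:proof of first theorem} and~\ref{sect:proof of second theorem}.
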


 \medskip
 
\section{Littlewood-Paley characterization of $\Lambda_L^\alpha (\mathbb{R}^n)$} \label{sect:proof of first theorem}

This section is devoted to the proof of Theorem \ref{thm:main-1}.
 
\subsection*{Proof of (i) of Theorem \ref{thm:main-1} }
Let $f\in \Lambda_L^\alpha (\mathbb{R}^n)$ and $k = \lfloor {\alpha}/{2}\rfloor +1$. 
Then by (i) of Proposition \ref{prop:heat semigroup 1}, we have  $f \in \mathcal{H}_L(\mathbb{R}^n)$ and
\begin{align*}
\sup _{t>0} t^{-\alpha}\|\Theta_{(k)}(t\sqrt{L})f\|_{L^{\infty}} \lesssim \|f\|_{\Lambda_L^\alpha},
\end{align*}
where $\Theta_{(k)}(\cdot)$ is defined by \eqref{eq:def of Theta}.
Hence, it suffices to show that 
\begin{align} \label{eq:desired ii}
\sup_{j \in \mathbb{Z}}2^{j\alpha} \| \Phi(2^{-j}\sqrt{L})f\|_{L^\infty}\lesssim \sup_{t >0} t^{-\alpha} \|\Theta_{(k)}(t\sqrt{L})f\|_{L^{\infty}}.
\end{align}

Since $|\Theta_{(k)} (\lambda)| >0$ on $(1/2,2)$,  by (ii) of Lemma \ref{lem:construc LP} there exists $\Omega \in \mathcal{A}([0,\infty))$ such that $\supp \Omega \subset [1/2,2]$ and
\begin{align*}
\sum_{\ell \in \mathbb{Z}} \Omega(2^{-\ell} \lambda) \Theta_{(k)} (2^{-\ell} \lambda)=1, \quad \forall \lambda \in (0,\infty).
\end{align*}
Using Proposition \ref{prop:prop of polynomial}, it follows that 
\begin{align} \label{eq:hom cal re for}
f=\sum_{\ell \in \mathbb{Z}} \Omega(2^{-\ell} \sqrt{L})  \Theta_{(k)}(2^{-\ell} \sqrt{L}) f
\quad 
\text{in } \mathcal{S}_L'(\mathbb{R}^n)/\mathcal{P}_L.
\end{align}

Since $\Phi$ vanishes near the origin, we have $\Phi(2^{-j}\sqrt{L})P =0$ for every $j \in \mathbb{Z}$ and every $P \in \mathcal{P}_L$. 
Indeed, if $P \in \mathcal{P}_L$, then $L^m P =0$  for some $m \in \mathbb{N}_0$. Hence
\begin{align} \label{eq:psi iden}
\Phi(2^{-j}\sqrt{L})P &= 2^{-2jm}(2^{-j}\sqrt{L})^{-2m}\Phi(2^{-j}\sqrt{L})(L^mP)  \nonumber\\
&= 2^{-2jm}\Psi(2^{-j}\sqrt{L}) (L^m P)\\
& =0, \nonumber
\end{align}
where $\Psi$ is a function on $[0,\infty)$ defined by
$\Psi (\lambda):= \lambda^{-2m}\Phi(\lambda)$. The second identity in \eqref{eq:psi iden} is justified by the fact that  $\Psi \in\mathcal{A} ([0,\infty))$.

Using \eqref{eq:psi iden}, it follows from \eqref{eq:homo cal rep fo} that for every $j \in \mathbb{Z}$ and every $x \in \mathbb{R}^n$,
\begin{align*} 
 \Phi(2^{-j}\sqrt{L})f(x) &= \sum_{\ell \in \mathbb{Z}} \Phi(2^{-j}\sqrt{L})\Omega(2^{-\ell} \sqrt{L} ) \Theta_{(k)}(2^{-\ell} \sqrt{L}) f(x)  \\
 & = \sum_{\ell \in \mathbb{Z}} \int_{\mathbb{R}^n}K_{\Phi(2^{-j}\sqrt{L})\Omega(2^{-\ell} \sqrt{L} )}(x,y) \Theta_{(k)}(2^{-\ell} \sqrt{L}) f(y)dy.
\end{align*}
Consequently,
\begin{align} \label{eq:sum j ll}
&  2^{j\alpha}\Phi(2^{-j}\sqrt{L})f(x) \nonumber \\
 &\quad\quad  = \sum_{\ell \in \mathbb{Z}} 2^{(j -\ell) \alpha} \int_{\mathbb{R}^n}K_{\Phi(2^{-j}\sqrt{L})\Omega(2^{-\ell} \sqrt{L} )}(x,y)  \big[ 2^{\ell \alpha}\Theta_{(k)}(2^{-\ell} \sqrt{L}) f(y) \big] dy.
\end{align}

Since both $\Phi$ and $\Omega$ belong to $\mathcal{A}([0,\infty))$ and vanish near the origin, it follows from 
Proposition \ref{prop:AOE} that for any $M \in \mathbb{N}$, 
there exists a constant $C$ such that for all $j, \ell \in \mathbb{Z}$ and all $x,y \in \mathbb{R}^n$,
\begin{align*}
\big| K_{\Phi(2^{-j}\sqrt{L})\Omega(2^{-\ell} \sqrt{L} )}(x,y)\big|  \lesssim 2^{-2|j -\ell|M} 2^{(j \wedge \ell)n} (1 + 2^{j \wedge \ell} |x-y|)^{-(n+1)}. 
\end{align*}
Inserting this into \eqref{eq:sum j ll}, and choosing $M > \alpha /2$, we obtain
\begin{align*}
2^{j\alpha}\big| \Phi(2^{-j}\sqrt{L})f(x)\big| 
&\lesssim \Big(\sup_{\nu \in \mathbb{Z}} 2^{\nu \alpha}\big\| \Theta_{(k)}(2^{-\nu} \sqrt{L}) f \big\|_{L^\infty} \Big) \sum_{\ell \in \mathbb{Z}}2^{-2|j -\ell|M}  2^{(j-\ell) \alpha }\\
& \quad\quad \times  \int_{\mathbb{R}^n}  2^{(j \wedge \ell)n} (1 + 2^{j \wedge \ell} |x-y|)^{-(n+1)} dy \\
&\lesssim \Big(\sup_{\nu \in \mathbb{Z}} 2^{\nu \alpha}\| \Theta_{(k)}(2^{-\nu} \sqrt{L}) f \|_{L^\infty} \Big)
\sum_{\ell \in \mathbb{Z}}2^{-|j -\ell|(2M-\alpha)}  \\
&\lesssim \sup_{\nu \in \mathbb{Z}} 2^{\nu \alpha}\| \Theta_{(k)}(2^{-\nu} \sqrt{L}) f \|_{L^\infty} \\
& \leq \sup_{t >0}t^{-\alpha}\| \Theta_{(k)}(t \sqrt{L}) f \|_{L^\infty}.
\end{align*}
Since this estimates holds for all $j \in \mathbb{Z}$ and all $x \in \mathbb{R}^n$, \eqref{eq:desired ii} follows. 

\subsection*{Proof of (ii) of Theorem \ref{thm:main-1} }
Since $\supp \Phi \subset  [1/2,2]$ and $|\Phi(\lambda)|  >0$ for $3/5 \leq \lambda \leq 5/3$, 
there exists $\Psi \in C^\infty([0,\infty))$ such that $\supp \Psi \subset [1/2,2]$ and 
\begin{align*}
\sum_{j \in \mathbb{Z}} \Psi (2^{-j}\lambda)\Phi(2^{-j}\lambda) =1, \quad \forall \lambda \in [0,\infty).
\end{align*}
Hence, by the homogeneous Calder\'{o}n reproducing formula (cf. Proposition \ref{prop:prop of polynomial}),
we see that for any $f \in \mathcal{S}_L'(\mathbb{R}^n)$, there holds
\begin{align} \label{eq:converges in S P}
f = \sum_{j \in \mathbb{Z}} \Psi(2^{-j}\sqrt{L})\Phi(2^{-j}\sqrt{L})f  \quad \text{in } \mathcal{S}_L'/\mathcal{P}_L.
\end{align}

We divide the rest of the proof into three steps.

 \medskip
\noindent
{\bf Step 1.}  We show that the series of functions
\begin{align} \label{eq:series}
\sum_{j \in \mathbb{Z}} \Psi (2^{-j}\sqrt{L})\Phi(2^{-j}\sqrt{L})f(x)
\end{align}
converges uniformly on any compact subset of $\mathbb{R}^n$, and
there exists a constant $C >0$ such that for all $x \in \mathbb{R}^n$,
\[
|\tilde{f}(x)| \leq C  \Big( \sup_{j \in \mathbb{Z}}2^{j\alpha} \|\Phi(2^{-j}\sqrt{{L}})f\|_{L^\infty}\Big) \rho(x)^\alpha ,
\]
where $\tilde{f}$ is the sum function of the series \eqref{eq:series}.

To prove that  the series \eqref{eq:series}  converges uniformly on any compact subset of $\mathbb{R}^n$,
by the Heine-Borel (finite covering) theorem 
it suffices to show that for every $x_0 \in \mathbb{R}^n$, the series \eqref{eq:series}  converges uniformly on the ball $B(x_0,\rho(x_0))$. 
Now let $x_0$ be an arbitrary point in $\mathbb{R}^n$.
By (ii) of Lemma \ref{lem:prop of rho}, there exist positive constants $A_1$ and $A_2$, 
 such that for all $x \in B(x_0,\rho(x_0))$, we have
\begin{align} \label{eq:equivalence rho}
   A_1\rho(x_0) \leq  \rho(x) \leq A_2 \rho(x_0).
\end{align}
Note that the constants $A_1$ and $A_2$ are independent of $x_0$ and $x$. 
In addition, since $|\{x \in \mathbb{R}^n: V(x) >0\}| \neq 0$, by  
Lemma \ref{lem:Vimpliesrho} we have 
$\rho(x) < +\infty$ for every $x \in \mathbb{R}^n$.

\medskip
For each $j \in \mathbb{Z}$ such that $2^{-j} \leq \rho(x_0)$,  and all $x \in B(x_0,\rho(x_0))$, 
\begin{align*}
|\Psi(2^{-j}\sqrt{L})\Phi(2^{-j}\sqrt{L})f  (x)| &\leq 2^{-j\alpha}\int_{\mathbb{R}^n} \big|K_{\Psi(2^{-j}\sqrt{L})} (x,y)\big|\big| 2^{j\alpha} \Phi(2^{-j}\sqrt{L})f(y)\big|dy \\
& \leq 2^{-j\alpha}\Big( \sup_{\nu \in \mathbb{Z}} 2^{\nu \alpha}\|\Phi(2^{-\nu}\sqrt{L})f\|_{L^\infty} \Big) \\
& \quad\quad\quad \times \int_{\mathbb{R}^n}
2^{jn} (1 +2^j |x-y|)^{-(n+1)} dy  \\
& \lesssim  \Big( \sup_{\nu \in \mathbb{Z}} 2^{\nu \alpha}\|\Phi(2^{-\nu}\sqrt{L})f\|_{L^\infty} \Big)  2^{-j\alpha},
\end{align*}
where for the second inequality we used the kernel estimate in proposition \ref{prop:K Phi L}.
Since  $\sup_{\nu \in \mathbb{Z}} 2^{\nu \alpha}\|\Phi(2^{-\nu}\sqrt{L})f\|_{L^\infty}$ is finite and the constant term series 
\[
\sum_{j \in \mathbb{Z}: \  2^{-j} \leq \rho(x_0)} 2^{-j\alpha}
\]
converges, using Weierstrass M-test it follows that
\[
\sum_{j \in \mathbb{Z}: \ 2^{-j} \leq \rho(x_0)}\Psi (2^{-j}\sqrt{L})\Phi(2^{-j}\sqrt{L})f(x) 
\]
converges uniformly on $B(x_0,\rho(x_0))$. Furthermore, for all $x \in B(x_0,\rho(x_0))$,
\begin{equation} \label{eq:estimate sum 1} 
\begin{split}
 &  \sum_{j \in \mathbb{Z}:\ 2^{-j} \leq \rho(x_0)}
 \big|\Psi (2^{-j}\sqrt{L})\Phi(2^{-j}\sqrt{L})f(x) \big| \\
  & \quad\quad\lesssim \Big( \sup_{\nu \in \mathbb{Z}} 2^{\nu \alpha}\|\Phi(2^{-\nu}\sqrt{L})f\|_{L^\infty} \Big) 
  \sum_{j \in \mathbb{Z}:\ 2^{-j} \leq \rho(x_0)} 2^{-j\alpha}  \\
 &\quad\quad\lesssim \Big( \sup_{\nu \in \mathbb{Z}} 2^{\nu \alpha}\|\Phi(2^{-\nu}\sqrt{L})f\|_{L^\infty} \Big)\rho(x_0)^\alpha  \\
 & \quad\quad \sim \Big( \sup_{\nu \in \mathbb{Z}} 2^{\nu \alpha}\|\Phi(2^{-\nu}\sqrt{L})f\|_{L^\infty} \Big)\rho(x)^\alpha,
\end{split}
\end{equation}
where the last equivalence is justified by \eqref{eq:equivalence rho}.

\medskip

If $j \in \mathbb{Z}$ such that $2^{-j} > \rho(x_0)$, we let $M:= \lfloor \alpha \rfloor +1$, and write, for all $x \in B(x_0, \rho(x_0))$,
\begin{align*}
&|\Psi(2^{-j}\sqrt{L})\Phi(2^{-j}\sqrt{L})f  (x)| \\
& \quad \leq \int_{\mathbb{R}^n} \big|K_{\Psi(2^{-j}\sqrt{L})}(x,y) \big|\Phi(2^{-j}\sqrt{L})f(y) \big| \big| dy\\
&\quad = 2^{j(M-\alpha)}\rho(x_0)^M\int_{\mathbb{R}^n} \Big(\frac{2^{-j}}{\rho(x_0)} \Big)^M \big|K_{\Psi(2^{-j}\sqrt{L})} (x,y)\big| \big| 2^{j\alpha}\Phi(2^{-j}\sqrt{L})f(y)\big|dy \\
& \quad \leq 2^{j(M-\alpha)}\rho(x_0)^M  \Big( \sup_{\nu \in \mathbb{Z}}2^{\nu\alpha} \|\Phi(2^{-\nu}\sqrt{L})f\|_{L^\infty} \Big)
\int_{\mathbb{R}^n} \Big(\frac{2^{-j}}{\rho(x_0)} \Big)^M \big|K_{\Psi(2^{-j}\sqrt{L})} (x,y)\big|dy.
\end{align*}
By Proposition \ref{prop:kernel phi t L} with $2^{-j}$ in place of $t$, we have
\begin{align*}
&\int_{\mathbb{R}^n} \Big(\frac{2^{-j}}{\rho(x_0)} \Big)^M \big|K_{\Psi(2^{-j}\sqrt{L})} (x,y)\big|dy  \\
& \quad \lesssim \int_{\mathbb{R}^n} \Big(\frac{2^{-j}}{\rho(x_0)} \Big)^M \frac{2^{jn}}{(1+ 2^j |x-y|)^{(n+1)}} 
\Big( \frac{2^{-j}}{\rho(x)}\Big)^{-M}dy  \\
& \quad \sim \int_{\mathbb{R}^n}   \frac{2^{jn}}{(1+ 2^j |x-y|)^{(n+1)}}  dy  \\
& \quad \lesssim 1,
\end{align*}
where we also used \eqref{eq:equivalence rho}. It follows that
\begin{align*} 
|\Psi(2^{-j}\sqrt{L})\Phi(2^{-j}\sqrt{L})f  (x)| 
\lesssim  \Big( \sup_{\nu \in \mathbb{Z}}2^{\nu\alpha} \|\Phi(2^{-\nu}\sqrt{L})f\|_{L^\infty} \Big)\rho(x_0)^M2^{j(M-\alpha)} .
\end{align*}
Since the constant term series 
\[
\sum_{j \in \mathbb{Z}:\ 2^{-j} > \rho(x_0)} 2^{j(M-\alpha)}
\]
converges,   applying the Weierstrass M-test we see that
\[
\sum_{j \in \mathbb{Z}:\ 2^{-j} > \rho(x_0)}\Psi (2^{-j}\sqrt{L})\Phi(2^{-j}\sqrt{L})f(x) 
\]
converges uniformly on $B(x_0,\rho(x_0))$. Moreover, for all $x \in B(x_0,\rho(x_0))$ we have
\begin{equation} \label{eq:est sum 2}
\begin{split}
& \sum_{j \in \mathbb{Z}:\ 2^{-j} > \rho(x_0)}
 \big|\Psi (2^{-j}\sqrt{L})\Phi(2^{-j}\sqrt{L})f(x) \big|  \\
 &\quad\quad \lesssim \Big( \sup_{\nu \in \mathbb{Z}}2^{\nu\alpha} \|\Phi(2^{-\nu}\sqrt{L})f\|_{L^\infty} \Big)\rho(x_0)^M \sum_{j \in \mathbb{Z}:\ 2^{-j} > \rho(x_0)} 2^{j(M-\alpha)} \\
 &\quad\quad \sim \Big( \sup_{\nu \in \mathbb{Z}}2^{\nu\alpha} \|\Phi(2^{-\nu}\sqrt{L})f\|_{L^\infty} \Big) \rho(x_0)^M \rho(x_0)^{\alpha -M} \\
 &\quad\quad \sim \Big( \sup_{\nu \in \mathbb{Z}}2^{\nu\alpha} \|\Phi(2^{-\nu}\sqrt{L})f\|_{L^\infty} \Big) \rho(x)^\alpha,
\end{split}
\end{equation}
where the last equivalence used \eqref{eq:equivalence rho}.

Therefore, we have proved that the series  \eqref{eq:series} converges uniformly on $B(x_0,\rho(x_0))$.
Since $x_0$ is an arbitrary point in $\mathbb{R}^n$, it follows that the series \eqref{eq:series} converges pointwise on $\mathbb{R}^n$.
We denote by $\tilde{f}$ the sum function of the series \eqref{eq:series}. Then by \eqref{eq:estimate sum 1} and \eqref{eq:est sum 2} we have, for all $x \in \mathbb{R}^n$, 
\begin{align} \label{eq:estimate of tilde f}
|\tilde{f}(x)| \leq \sum_{j \in \mathbb{Z}}
 \big|\Psi (2^{-j}\sqrt{L})\Phi(2^{-j}\sqrt{L})f(x) \big| \lesssim \Big( \sup_{\nu \in \mathbb{Z}}2^{\nu\alpha} \|\Phi(2^{-\nu}\sqrt{L})f\|_{L^\infty} \Big)  \rho(x)^\alpha .
\end{align}
In other words,
\begin{align} \label{eq:tilde f infinity}
\| \rho(\cdot)^{-\alpha}\tilde{f}(\cdot)\|_{L^\infty} \lesssim \sup_{\nu \in \mathbb{Z}}2^{\nu \alpha} \|\Phi(2^{-\nu}\sqrt{L})f\|_{L^\infty}.
\end{align}

Note that, by (i) of Lemma \ref{lem:prop of rho},  $\rho$ is at most of polynomial growth. Hence 
 \eqref{eq:estimate of tilde f} implies that $\tilde{f}$ can naturally be regarded 
as a distribution in $\mathcal{S}_{L}'(\mathbb{R}^n)$.

\medskip
\noindent
{\bf Step 2.}  We  next show that   
 \begin{align} \label{eq:converge in S}
\tilde{f} = \sum_{j \in \mathbb{Z}}\Psi(2^{-j}\sqrt{L})\Phi(2^{-j}\sqrt{L})f \quad \text{in } \mathcal{S}_L'(\mathbb{R}^n),
 \end{align}
i.e., for every $h \in \mathcal{S}_{L}(\mathbb{R}^n)$, 
\begin{align} \label{eq:converges to 0}
\lim_{\substack{N_1 \rightarrow -\infty\\ N_2 \rightarrow +\infty }} \int_{\mathbb{R}^n} \left[\tilde{f}(x)-\sum_{ N_1 \leq j \leq N_2} 
\Psi(2^{-j}\sqrt{L})\Phi(2^{-j}\sqrt{L})f (x)  \right] h(x)dx  =0.
\end{align}

\medskip

Let $\varepsilon$ be an arbitrary positive real number. We write
\begin{align*}
& \int_{\mathbb{R}^n} \left[\tilde{f}(x)-\sum_{ N_1 \leq j \leq N_2} 
\Psi(2^{-j}\sqrt{L})\Phi(2^{-j}\sqrt{L})f (x)  \right] h(x)dx   \\
& \quad = \int_{B(0,R)} \left[\tilde{f}(x)  -\sum_{ N_1 \leq j \leq N_2} 
 \Psi(2^{-j}\sqrt{L})\Phi(2^{-j}\sqrt{L})f(x)   \right]  h(x)dx  \\
& \quad\quad + \int_{\mathbb{R}^n \backslash B(0,R)} \left[\rho(x)^{-\alpha}\tilde{f}(x) -\sum_{ N_1 \leq j \leq N_2} 
\rho(x)^{-\alpha}\Psi(2^{-j}\sqrt{L})\Phi(2^{-j}\sqrt{L})f(x)   \right] \rho(x)^\alpha h(x)dx\\
& \quad =: I_1 +I_2,
\end{align*}
where $R$ is a sufficiently large radius which will be determined later.

By \eqref{eq:estimate of tilde f} we have, for any $N_1,N_2 \in \mathbb{Z}$ ($N_1 <N_2$) and any $x \in \mathbb{R}^n$,
\begin{align*}
&\left| \rho(x)^{-\alpha}   \tilde{f}(x) -\sum_{ N_1 \leq j \leq N_2} 
\rho(x)^{-\alpha}\Psi(2^{-j}\sqrt{L})\Phi(2^{-j}\sqrt{L})f (x) \right| \\
& \quad \leq \rho(x)^{-\alpha}|\tilde{f}(x)| + \rho(x)^{-\alpha}\sum_{j \in \mathbb{Z}} |\Psi(2^{-j}\sqrt{L})\Phi(2^{-j}\sqrt{L})f(x)| \\
& \quad \lesssim  \sup_{\nu \in \mathbb{Z}}2^{\nu\alpha} \|\Phi(2^{-\nu}\sqrt{L})f\|_{L^\infty} .
\end{align*}
On the other hand,
since $0< \rho(0) <+\infty$,
 by (i) of Lemma \ref{lem:prop of rho}, we have 
\begin{align} \label{eq:rhoxrho0}
\rho(x) \leq \rho(0) \Big(1 + \frac{|x|}{\rho(0)} \Big)^{\theta} \lesssim (1 + |x|)^{\theta}.
\end{align}
This along with the fact that $h \in \mathcal{S}_L(\mathbb{R}^n)$ implies that 
\[
\rho(x)^\alpha|h(x)| \lesssim (1 + |x|)^{-n-1}.
\]
Therefore, if $R$ is sufficiently large, we have
\begin{align} \label{eq:fix R}
|I_2| \leq C \Big(\sup_{\nu \in \mathbb{Z}}2^{\nu\alpha} \|\Phi(2^{-\nu}\sqrt{L})f\|_{L^\infty} \Big) \int_{\mathbb{R}^n \backslash B(0,R)} (1 + |x|)^{-n-1}dx  < \frac{\varepsilon}{2}.
\end{align}

We now fix an $R$ such that \eqref{eq:fix R} holds, and then estimate the term $I_1$.  Since  the series \eqref{eq:series} converges uniformly on any compact subset of 
$\mathbb{R}^n$ (by Step 1), there exist $N_1^0, N_2^0 \in \mathbb{Z}$, with $N_1^0 < N_2^0$, such that
for any $N_1 < N_1^0$ and $N_2 > N_2^0$, and for all $x \in B(0,R)$, 
\begin{align*}
 \left|    \tilde{f}(x)  -\sum_{ N_1 \leq j \leq N_2} 
\Psi(2^{-j}\sqrt{L})\Phi(2^{-j}\sqrt{L})f(x) \right|   < \frac{\varepsilon}{2|B(0,R)|\|h\|_{L^\infty}}.
\end{align*}
It follows that for any $N_1 < N_1^0$ and $N_2 > N_2^0$
\begin{align} \label{eq:I-1}
   |I_1|  \leq |B(0,R)| \|h\|_{L^\infty} \frac{\varepsilon}{2|B(0,R)|\|h\|_{L^\infty}} =\frac{\varepsilon}{2}.
\end{align}

Combining \eqref{eq:I-1} and \eqref{eq:fix R}, we see that for any $N_1 < N_1^0$ and $N_2 > N_2^0$,
\begin{align*}
 \int_{\mathbb{R}^n} \left[\tilde{f}(x)-\sum_{ N_1 \leq j \leq N_2} 
\Psi(2^{-j}\sqrt{L})\Phi(2^{-j}\sqrt{L})f (x)  \right] h(x)dx < \frac{\varepsilon}{2} + \frac{\varepsilon}{2} =\varepsilon.
\end{align*}
Hence  \eqref{eq:converges to 0} is true.

\medskip 
\noindent 
{\bf Step 3.} We show that  there exists a generalized polynomial
$P \in \mathcal{P}_L$ such that $f -P =\tilde{f}$ and $\tilde{f} \in \Lambda_L^\alpha (\mathbb{R}^n)$ with 
\begin{align} \label{eq:tilde f lipschitz}
\|\tilde{f}\|_{\Lambda_L^\alpha} \lesssim \sup_{j \in \mathbb{Z}} 2^{j \alpha} \|\Phi(2^{-j}\sqrt{L})f\|_{L^\infty}.
\end{align}

On the one hand, the series \eqref{eq:series} converges in $\mathcal{S}_L'(\mathbb{R}^n)/\mathcal{P}_L$ to $f$ (see \eqref{eq:converges in S P}). On the other hand, the series \eqref{eq:series} converges 
in $\mathcal{S}_L'(\mathbb{R}^n)$ to $\tilde{f}$ (see \eqref{eq:converge in S}), and hence also
 converges 
in $\mathcal{S}_L'(\mathbb{R}^n)/\mathcal{P}_L$ to $\tilde{f}$. Here, we do not explicitly distinguish between 
$g \in \mathcal{S}_L'(\mathbb{R}^n)$ and
its equivalence class $g + \mathcal{P}_L$, as noted earlier on p. 7.
Since $\mathcal{S}_L'(\mathbb{R}^n)/\mathcal{P}_L$ can be identified with the dual of a Frech\'{e}t space (see \cite[Proposition 3.7]{GKKP1}), it is a Hausdorff space and hence any convergent sequence in $\mathcal{S}_L'(\mathbb{R}^n)/\mathcal{P}_L$ must have a unique limit. 
It follows that $f =\tilde{f}$
as elements in $\mathcal{S}_L'(\mathbb{R}^n)/\mathcal{P}$; in other words, there 
exists a generalized polynomial $P \in \mathcal{P}_L$ such that 
\[
f - P =\tilde{f}.
\]
It remains to show \eqref{eq:tilde f lipschitz}.

Recall that $\rho(\cdot)^{-\alpha} \tilde{f}(\cdot) \in L^\infty (\mathbb{R}^n)$ (cf. \eqref{eq:tilde f infinity}). This implies that
 $f \in \mathcal{H}_L(\mathbb{R}^n)$ by \cite[Proposition~2.7]{DT}. Hence it follows from (ii) of Proposition \ref{prop:heat semigroup 1} that
\begin{align*}
\|\tilde{f}\|_{\Lambda^\alpha_L} \lesssim \|\rho(\cdot)^{-\alpha}\tilde{f}(\cdot) \|_{L^\infty}+ \sup_{t>0} t^{- \alpha} \| \Theta_{(k)} (t\sqrt{L})\tilde{f}\|_{L^\infty} .   
\end{align*}
Invoking \eqref{eq:tilde f infinity}, we further see that
\begin{align*}
\|\tilde{f}\|_{\Lambda^\alpha_L} \lesssim \sup_{\nu \in \mathbb{Z}}2^{\nu \alpha} \|\Phi(2^{-\nu}\sqrt{L})f\|_{L^\infty} + \sup_{t>0} t^{- \alpha} \| \Theta_{(k)} (t\sqrt{L})\tilde{f}\|_{L^\infty} .    
\end{align*}
Therefore, in order to show
\eqref{eq:tilde f lipschitz}, it suffices to prove that
\begin{align} \label{eq:Theta controlled by Phi}
\sup_{t>0} t^{- \alpha} \| \Theta_{(k)} (t\sqrt{L})\tilde{f}\|_{L^\infty} \lesssim      \sup_{\nu \in \mathbb{Z}} 2^{\nu\alpha} \|\Phi(2^{-\nu}\sqrt{L})f\|_{L^\infty}.
\end{align}

To show \eqref{eq:Theta controlled by Phi}, we start with the identity 
\eqref{eq:converge in S}. From this identity it follows that
for each $t >0$ and $x \in \mathbb{R}^n$,
\begin{align} \label{eq:pwr}
\Theta_{(k)} (t\sqrt{L})\tilde{f}(x) =  \sum_{j \in \mathbb{Z}}\Theta_{(k)} (t\sqrt{L})\Psi(2^{-j}\sqrt{L})\Phi(2^{-j}\sqrt{L})f (x).
\end{align}
Let $\ell_0$ be the (unique) integer such that $t \in [2^{-\ell_0}, 2^{-\ell_0 +1})$. Then 
it follows that
\begin{align} \label{eq:poinwise2}
t^{-\alpha} \big|\Theta_{(k)} (t\sqrt{L})\tilde{f}(x) \big| &\sim 2^{\ell_0 \alpha }  \big|\Theta_{(k)} (t\sqrt{L})\tilde{f}(x)\big| \nonumber \\
& \leq \sum_{j \in \mathbb{Z}}2^{(\ell_0 -j)\alpha} \int_{\mathbb{R}^n} \big| K_{ \Theta_{(k)} (t\sqrt{L})\Psi(2^{-j}\sqrt{L})} (x,y)\big|
2^{j\alpha} | \Phi(2^{-j}\sqrt{L})f (y)|  dy \nonumber \\
& \leq \sum_{j \in \mathbb{Z}}2^{(\ell_0 -j)\alpha}\Big(\sup_{\nu \in \mathbb{Z}} 2^{\nu\alpha} \|\Phi(2^{-\nu}\sqrt{L})f\|_{L^\infty}\Big) \nonumber \\
&\quad\quad \times \int_{\mathbb{R}^n} \big| K_{ \Theta_{(k)} (t\sqrt{L})\Psi(2^{-j}\sqrt{L})} (x,y)\big|  dy.
\end{align}
From the definition of $\Theta_{(k)} (\cdot)$ (cf. \eqref{eq:def of Theta}), we see that $(\cdot)^{-2k}\Theta_{(k)}(\cdot) \in \mathcal{A}([0,\infty))$. Meanwhile, since $\Psi \in C^\infty([0,\infty))$ and $\supp \Psi \subset [1/2,2]$, we also have
$(\cdot)^{-2k}\Psi (\cdot) \in \mathcal{A}([0,\infty))$. 
Hence we may apply Proposition \ref{prop:AOE} to have
\begin{align*}
 \big| K_{ \Theta_{(k)} (t\sqrt{L})\Psi(2^{-j}\sqrt{L})} (x,y)\big| &\lesssim \Big(\frac{t}{2^{-j}} \wedge \frac{2^{-j}}{t} \Big)^{2k}  (t \vee 2^{-j})^{-n} \big[1 +   (t \vee 2^{-j})^{-1} |x-y|\big]^{-(n+1)} \\
 & \sim 2^{-2|j -\ell_0| k}2^{(j \wedge \ell_0)n} (1 + 2^{j \wedge \ell_0} |x-y|)^{-(n+1)}.
\end{align*}
Consequently, 
\begin{align*}
 \int_{\mathbb{R}^n} \big| K_{ \Theta_{(k)} (t\sqrt{L})\Psi(2^{-j}\sqrt{L})} (x,y)\big|  dy 
 \lesssim 2^{-2|j -\ell_0|k}.
\end{align*}
Inserting this into \eqref{eq:poinwise2} we obtain
\begin{align*}
 t^{-\alpha} \big|\Theta_{(k)} (t\sqrt{L})\tilde{f}(x) \big|   
& \lesssim \Big(\sup_{\nu \in \mathbb{Z}} 2^{\nu\alpha} \|\Phi(2^{-\nu}\sqrt{L})f\|_{L^\infty}\Big) \sum_{j \in \mathbb{Z}}2^{(\ell_0 -j)\alpha}2^{-2|j -\ell_0|k} \\
&\leq \Big(\sup_{\nu \in \mathbb{Z}} 2^{\nu\alpha} \|\Phi(2^{-\nu}\sqrt{L})f\|_{L^\infty}\Big) \sum_{j \in \mathbb{Z}}2^{-|j -\ell_0|(2k-\alpha)} \\
& \lesssim \sup_{\nu \in \mathbb{Z}} 2^{\nu\alpha} \|\Phi(2^{-\nu}\sqrt{L})f\|_{L^\infty},
\end{align*}
where we used the fact that $2k -\alpha >0$.
Since this estimate holds for all $t >0$ and $x \in \mathbb{R}^n$, with implicit constant independent of $t$ and $x$,
we see that \eqref{eq:Theta controlled by Phi} is true. 

\medskip

\section{Carleson measure characterization of $\Lambda_L^\alpha (\mathbb{R}^n)$} \label{sect:proof of second theorem}

The main aim of this section is to prove Theorem \ref{thm:main-2}. 
We first present two lemmas containing useful facts.

\begin{lemma} \label{lem:5.2}
Assume that $|\{x \in \mathbb{R}^n: V (x) >0\}| \neq 0$.
If $f \in \mathcal{F}(\mathbb{R}^n)$, then $f$ satisfies the heat size condition for $L$. 
\end{lemma}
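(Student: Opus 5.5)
Fix $f \in \mathcal{F}_\gamma(\mathbb{R}^n)$ for some $\gamma>0$, so that $\int_{\mathbb{R}^n} |f(y)|(1+|y|)^{-n-\gamma}\,dy<\infty$. The heat size condition has two parts, and I will check them separately.

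\textbf{Part (i).} For each $t>0$ we have $e^{-|x|/t}\lesssim_t (1+|x|)^{-n-\gamma}$. Hence
\[
\int_{\mathbb{R}^n} e^{-|x|/t}|f(x)|\,dx \lesssim_t \int_{\mathbb{R}^n} |f(x)|(1+|x|)^{-n-\gamma}\,dx<\infty .
\]

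\textbf{Part (ii).} Fix $x$ and $\ell\in\mathbb{N}_0$. As recorded before \eqref{eq:equal},
\[
\partial_t^\ell e^{-tL}f=(-1)^\ell t^{-\ell}\,\Theta_{(\ell)}(\sqrt{tL})f .
\]
Here $\Theta_{(\ell)}\in\mathcal{A}([0,\infty))$, and $\Theta_{(\ell)}(\sqrt{tL})f(x)$ is given by integrating $f$ against the kernel. So it is enough to bound the kernel of $\Theta_{(\ell)}(s\sqrt L)$, with $s=\sqrt t$, and to show that the resulting integral tends to $0$ as $s\to\infty$.

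The key ingredient is a kernel estimate of the form of Proposition \ref{prop:kernel phi t L}, carrying the decaying factor $(1+s/\rho(x))^{-N_2}$. That proposition is stated for compactly supported $\Phi$, but its proof works verbatim for $\Theta_{(\ell)}$. Indeed, write
\[
\Theta_{(\ell)}(s\sqrt L)=e^{-s^2L/2}\,\Psi(s\sqrt L), \qquad \Psi(\lambda)=\lambda^{2\ell}e^{-\lambda^2/2}\in\mathcal{A}([0,\infty)).
\]
Now use Proposition \ref{prop:heat kernel} for the first factor and Proposition \ref{prop:K Phi L} for the second. This gives
\[
|K_{\Theta_{(\ell)}(s\sqrt L)}(x,y)|\lesssim s^{-n}\Big(1+\frac{|x-y|}{s}\Big)^{-N_1}\Big(1+\frac{s}{\rho(x)}\Big)^{-N_2}.
\]
Alternatively, for $\ell\ge1$ one can simply differentiate Proposition \ref{prop:heat kernel} in time via analyticity of the semigroup; the heat kernel bound handles $\ell=0$ directly.

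\textbf{Integration for $s\ge1$.} Take $N_1=n+\gamma$. For $s\ge 1$,
\[
s^{-n}\Big(1+\frac{|x-y|}{s}\Big)^{-n-\gamma}\le s^{\gamma}(s+|x-y|)^{-n-\gamma}\lesssim s^{\gamma}(1+|x|)^{n+\gamma}(1+|y|)^{-n-\gamma},
\]
using $1+|y|\le (1+|x|)(1+|x-y|)\le (1+|x|)(s+|x-y|)$. Therefore
\[
|\partial_t^\ell e^{-tL}f(x)|\lesssim_x t^{-\ell}\, s^{\gamma}\Big(1+\frac{s}{\rho(x)}\Big)^{-N_2}\int_{\mathbb{R}^n}|f(y)|(1+|y|)^{-n-\gamma}\,dy .
\]

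\textbf{The main obstacle.} The factor $s^{\gamma}$ is the only growth in this bound, and it is where the hypothesis $|\{V>0\}|\neq0$ is needed. By Lemma \ref{lem:Vimpliesrho}, that hypothesis gives $\rho(x)<\infty$. Choosing $N_2>\gamma$, the bound becomes $\lesssim_x s^{\gamma-N_2}\to0$ as $t\to\infty$. Without this decay (i.e.\ if $V\equiv0$) the conclusion fails for, e.g., constant $f$. Everything else in the argument is routine bookkeeping.
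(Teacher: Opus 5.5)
Your proof is correct and follows essentially the same route as the paper. Both arguments use a time-derivative heat kernel bound carrying the factor $(1+\sqrt{t}/\rho(x))^{-N}$, the splitting $1+|y|\le(1+|x|)(1+|x-y|)$ to extract the weight $(1+|y|)^{-n-\gamma}$, and the finiteness $\rho(x)<\infty$ from Lemma \ref{lem:Vimpliesrho} to force decay as $t\to\infty$. The only difference is the source of the kernel bound: the paper quotes the bound for $\partial_t^\ell p_t$ directly from \cite[Lemma 2.1]{DT}, while you rederive the equivalent bound for $K_{\Theta_{(\ell)}(s\sqrt{L})}$ by rerunning the proof of Proposition \ref{prop:kernel phi t L} with the factorization $\Theta_{(\ell)}(s\sqrt{L})=e^{-s^2L/2}\Psi(s\sqrt{L})$, which is valid since $\lambda^{2\ell}e^{-\lambda^2/2}\in\mathcal{A}([0,\infty))$.
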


\begin{proof}
Since $f \in \mathcal{F}(\mathbb{R}^n)$, there exists $\gamma >0$ such that 
\[
\int_{\mathbb{R}^n} \frac{|f(x)|}{(1 + |x|)^{n+\gamma}} dx <\infty.
\]
Hence, for any $t >0$,
\begin{align*}
\int_{\mathbb{R}^n}\frac{|f(x)|}{ e^{{\frac{|x|}{t}} }}dx \lesssim \int_{\mathbb{R}^n}\frac{|f(x)|}{(1 + \frac{|x|}{t})^{n+\gamma} }dx
&\leq  \int_{\mathbb{R}^n}\frac{|f(x)|}{(1 + |x|)^{n+\gamma}(1 +t )^{-(n+\gamma)} }dx  \\
& \leq (1 +t)^{n + \gamma}  \int_{\mathbb{R}^n}\frac{|f(x)|}{(1 + |x|)^{n+\gamma} }dx \\
& <\infty.
\end{align*}
It remains to show that for every $\ell \in \mathbb{N}_0$ and $x \in \mathbb{R}^n$, 
$\lim_{t \rightarrow \infty} \partial_t^\ell e^{-tL}f (x) =0$.

To show the last property, we write
\begin{align} \label{eq:tLell}
\partial_t^\ell e^{-tL}f (x) = \int_{\mathbb{R}^n} \partial_t^\ell p_t (x,y)f(y)dy ,
\end{align}
where $p_t (x,y)$ is the heat kernel of $L$. 
By \cite[Lemma 2.1]{DT}, for any $N >0$,  there exist constants $c, C_N >0$ such that
\begin{align*}
|\partial_t^\ell p_t (x,y)|& \leq C_N t^{-\ell -\frac{n }{2}} e^{-\frac{|x-y|^2}{ct}} \left(1+  \frac{\sqrt{t}}{\rho(x)} +  \frac{\sqrt{t}}{\rho(y)}\right)^{-N} \\
& \leq C_N 't^{-\ell -\frac{n }{2}}\left( 1 +\frac{|x-y|}{\sqrt{t}}\right)^{-(n +\gamma)}\left( \frac{\sqrt{t}}{\rho(x)}\right)^{-N}. 
\end{align*}
By the triangle inequality, we have
\begin{align*}
\left( 1 +\frac{|x-y|}{\sqrt{t}}\right)^{-(n +\gamma)} &\leq \left( 1 +\frac{|y|}{\sqrt{t}}\right)^{-(n +\gamma)}\left( 1 +\frac{|x|}{\sqrt{t}}\right)^{n +\gamma}\\
 & \leq (1 +\sqrt{t})^{n +\gamma} ( 1 + |y| )^{-(n +\gamma)}\left( 1 +\frac{|x|}{\sqrt{t}}\right)^{n +\gamma}.
\end{align*}
It follows that
\begin{align*}
|\partial_t^\ell p_t (x,y)|\lesssim  t^{-\ell -\frac{n }{2}}(1 +\sqrt{t})^{n +\gamma}  ( 1 + |y| )^{-(n +\gamma)}\left( 1 +\frac{|x|}{\sqrt{t}}\right)^{n +\gamma}\left( \frac{\sqrt{t}}{\rho(x)}\right)^{-N}.
\end{align*}
Inserting this into \eqref{eq:tLell} gives 
\begin{align*}
   | \partial_t^\ell e^{-tL}f (x) | &\lesssim  t^{-\ell - \frac{n }{2}} (1 +\sqrt{t})^{n +\gamma} \left( \frac{\sqrt{t}}{\rho(x)}\right)^{-N}  \left( 1 + \frac{|x|}{\sqrt{t}}\right)^{n +\gamma}\int_{\mathbb{R}^n} 
      \frac{|f(y)|}{(1 +|y|)^{n+\gamma} }dy \\
     & \lesssim   t^{-\ell - \frac{n }{2}} (1 +\sqrt{t})^{n +\gamma} \left( \frac{\sqrt{t}}{\rho(x)}\right)^{-N}  \left( 1 + \frac{|x|}{\sqrt{t}}\right)^{n +\gamma}.
\end{align*}
We now fix an arbitrary $x \in \mathbb{R}^n$, and choose $N >\max\{0, \gamma -2\ell + 2\}$. 
Then for any $t > \max \{1, |x|^2\}$,
\begin{align} \label{eq:TLE}
 | (tL)^\ell e^{-tL}f (x) | \lesssim t^{-1} \rho(x)^{\gamma -2\ell +2}.
\end{align}
Since $|\{x \in \mathbb{R}^n: V(x) >0\}| \neq 0$, we have $\rho(x) < +\infty$ by Lemma \ref{lem:Vimpliesrho}. 
Hence   \eqref{eq:TLE} implies that
$\lim_{t \rightarrow \infty} \partial_t^\ell e^{-tL}f (x) =0$.
\end{proof}

\begin{lemma} \label{lem: size0}
Assume that $|\{x \in \mathbb{R}^n: V (x) >0\}| \neq 0$.
Let $P \in \mathcal{P}_L(\mathbb{R}^n)$, i.e., $P$ is a generalized polynomial adapted to $L$.
Let further $P \in \mathcal{H}_L(\mathbb{R}^n)$, i.e., $P$ is a measurable function on $\mathbb{R}^n$ satisfying the heat size condition for $L$. Then $P (x)=0$  for a.e. $x \in \mathbb{R}^n$. 
\end{lemma}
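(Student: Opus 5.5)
The plan is to show that $e^{-tL}P$ is a polynomial in $t$ of degree less than $m$, where $m$ is such that $L^mP=0$, and then use the decay condition (ii) of the heat size condition to conclude that this polynomial vanishes identically, so that $P=\lim_{t\to0}e^{-tL}P=0$ a.e.

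\emph{Step 1: a polynomial in $t$.} Fix $m\in\mathbb{N}_0$ with $L^mP=0$ in $\mathcal{S}_L'(\mathbb{R}^n)$, and fix $x$. Set $u(t):=e^{-tL}P(x)$. Since $P\in\mathcal{H}_L$, the integral $\int p_t(x,y)P(y)\,dy$ converges absolutely by Proposition~\ref{prop:heat kernel} together with condition (i). This function agrees with the distributional definition \eqref{eq:Phi L f} applied to $\Phi(\lambda)=e^{-t\lambda^2}\in\mathcal{A}([0,\infty))$. Differentiating under the integral, and using the bounds for $\partial_t^\ell p_t$ from \cite[Lemma 2.1]{DT}, gives
\[
\partial_t^m u(t)=(-1)^m\big(L^me^{-tL}P\big)(x)=(-1)^m\big(e^{-tL}L^mP\big)(x)=0 .
\]
Here the middle identity is justified at the distributional level: $(e^{-tL}P,L^m\phi)=(P,e^{-tL}L^m\phi)=(L^mP,e^{-tL}\phi)=0$, and $L^m$ commutes with $e^{-tL}$ on $\mathcal{S}_L$. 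Hence $u(t)=\sum_{\ell<m}a_\ell(x)t^\ell$ is a polynomial in $t$ on $(0,\infty)$.

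\emph{Step 2: the polynomial vanishes.} Condition (ii) of the heat size condition says $\partial_t^\ell u(t)\to0$ as $t\to\infty$ for every $\ell\in\mathbb{N}_0$. Apply this with $\ell=m-1$: the derivative $\partial_t^{m-1}u$ is the constant $(m-1)!\,a_{m-1}(x)$, so $a_{m-1}(x)=0$. Descending in $\ell$, each top coefficient is a constant that must tend to $0$, so all $a_\ell(x)=0$. Thus $e^{-tL}P(x)=0$ for all $t>0$ and all $x$.

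\emph{Step 3: passing to $t\to0$.} The Gaussian bound \eqref{eq:GUE} and the local integrability from condition (i) give $e^{-tL}P\to P$ as $t\to0$. More precisely, this holds in the sense of distributions (pair with $\phi\in\mathcal{S}_L$ and use $e^{-tL}\phi\to\phi$ in $\mathcal{S}_L$), which is enough. Concretely, $(P,\phi)=\lim_{t\to0}(P,e^{-tL}\phi)=\lim_{t\to0}(e^{-tL}P,\phi)=0$ for all $\phi\in C_0^\infty\subset\mathcal{S}_L$. Therefore $\int P\phi=0$ for all $\phi\in C_0^\infty$, and so $P=0$ a.e.

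The main obstacle is the justification in Step 1 and Step 3 that the measurable function $P$, regarded as an element of $\mathcal{S}_L'$, pairs with $\mathcal{S}_L$ functions by integration, so that the pointwise heat extension coincides with the distributional one. Membership in $\mathcal{S}_L'$ with the integral pairing needs growth control, and condition (i) only gives $\int e^{-|x|/t}|P|<\infty$. I would handle this by noting that $P\in\mathcal{P}_L\subset\mathcal{S}_L'$ is identified with the function $P$ via the pairing $\int P\phi$ by hypothesis. I would also check that $\int|P\phi|<\infty$ for $\phi\in\mathcal{S}_L$ using the exponential decay of $e^{-tL}\phi$ and of the relevant kernels inherited from \eqref{eq:GUE}. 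For the derivative interchange, dominated convergence with \cite[Lemma 2.1]{DT} suffices.
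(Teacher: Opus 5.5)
Your argument is correct, and it takes a different and more elementary route than the paper. Both proofs start from $\partial_t^m e^{-tL}P=(-1)^m e^{-tL}(L^mP)=0$, but there they diverge.

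\textbf{The paper's route.} It feeds this into the De Le\'{o}n-Contreras--Torrea estimates (Propositions~\ref{prop:heat semigroup 2} and~\ref{prop:heat semigroup  3}), namely $\|\rho(\cdot)^{-\alpha}f\|_{L^\infty}\lesssim\sup_{t>0}t^{-\alpha}\|\Theta_{(m)}(t\sqrt{L})f\|_{L^\infty}$ for $f\in\mathcal{H}_L(\mathbb{R}^n)$. This gives $\rho(\cdot)^{-\alpha}P=0$ a.e. Only then does it use $\rho<+\infty$ (Lemma~\ref{lem:Vimpliesrho}) to conclude $P=0$ a.e. The decay condition (ii) is hidden inside the hypothesis $f\in\mathcal{H}_L(\mathbb{R}^n)$ of the quoted proposition.

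\textbf{Your route.} You use (ii) directly. The function $t\mapsto e^{-tL}P(x)$ is a polynomial of degree $<m$ that must tend to $0$ as $t\to\infty$ (the case $\ell=0$ alone suffices), so it vanishes, and you recover $P$ as $t\to0$.

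\textbf{What each buys.} Your approach is self-contained and more general: you never use $|\{x: V(x)>0\}|\neq0$. Indeed the lemma also holds for $V\equiv0$, since a polynomial $P$ with $e^{t\Delta}P(x)\to0$ must vanish. In that case the paper's argument gives nothing, because $\rho\equiv+\infty$. The paper's proof is shorter only because it reuses estimates already quoted.

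\textbf{A detail in your Step 3 needs repair.} The inclusion $C_0^\infty(\mathbb{R}^n)\subset\mathcal{S}_L(\mathbb{R}^n)$ is false in general. Since $V\in RH_q$ need not be smooth or even locally bounded, $L\phi=-\Delta\phi+V\phi$ need not be bounded, let alone lie in $\Dom(L)$. You can argue directly instead:
\begin{enumerate}
\item From $e^{-tL}P=0$ a.e.\ and Fubini (justified by \eqref{eq:GUE} and condition (i)), you get $\int P\,e^{-tL}g=0$ for every $g\in C_0^\infty(\mathbb{R}^n)$.
\item For $0<t\le 1$ you have $|e^{-tL}g(y)|\le e^{t\Delta}|g|(y)\le C_g e^{-c|y|^2}$, which is integrable against $|P|$ by (i).
\item Along some sequence $t_j\to0$, $e^{-t_jL}g\to g$ a.e., by strong $L^2$ continuity of the semigroup.
\item Dominated convergence then gives $\int Pg=0$ for all such $g$, hence $P=0$ a.e.
\end{enumerate}
Equivalently, test against $e^{-sL}g\in\mathcal{S}_L(\mathbb{R}^n)$ and let $s\to0$.

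The same separation fact is used tacitly in your Step 1, as in the paper, when you pass from $(-L)^me^{-tL}P=0$ in $\mathcal{S}_L'(\mathbb{R}^n)$ to a pointwise statement. It yields $\partial_t^m u=0$ for a.e.\ $x$ at each $t$. Continuity in $t$ plus Fubini upgrades this to all $t$ for a.e.\ $x$, which is all Steps~2--3 need.
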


\begin{proof}
Since $P \in \mathcal{P}_L(\mathbb{R}^n)$, there exists $m \in \mathbb{N}_0$ such that 
\begin{align} \label{eq:LMP=0}
L^m P =0 \quad \text{in }\mathcal{S}_L'(\mathbb{R}^n).
\end{align}

If $m =0$, then obviously \eqref{eq:LMP=0}  implies that $P(x) =0$ for a.e. $x \in \mathbb{R}^n$.

Now we assume that $m > 0$. On the one hand, \eqref{eq:LMP=0} implies that
\begin{align*} 
\partial_t^m e^{-tL} P =(-L)^me^{-tL}P =(-1)^m e^{-tL} ( L^m P ) =0 \quad 
\text{in } \mathcal{S}_L'(\mathbb{R}^n). 
\end{align*}
On the other hand, the condition $P \in \mathcal{H}_L(\mathbb{R}^n)$ implies that
$\partial_t^\ell e^{-tL}P$ is a pointwise well-defined 
measurable function given by
\[
\partial_t^m e^{-tL} P (x) = \int_{\mathbb{R}^n} \partial_t^m p_t (x,y) P(y)dy, \quad x \in \mathbb{R}^n,
\]
where $p_t (x,y)$ is the heat kernel of $L$. 
Therefore,
\begin{align} \label{eq: tmwtf}
\partial_t^m e^{-tL} P(x) =0 \quad \text{for a.e. } x \in \mathbb{R}^n.
\end{align}

Let $0< \alpha < 2-n/q$ such that $\lfloor \alpha /2 \rfloor +1  \leq m$. Let $k:=\lfloor \alpha /2 \rfloor +1$. By \eqref{eq:equal} and
\eqref{eq: tmwtf},  we have
\[
\sup_{t >0} \|\Theta_{(m)}(t\sqrt{L})P\|_{L^\infty}= \sup_{t >0} t^{m -\frac{\alpha}{2}} \|\partial_t^m e^{-tL}P \|_{L^\infty} =0.
\]
Then using Propositions \ref{prop:heat semigroup 2} and \ref{prop:heat semigroup  3}, it follows that
\begin{align*}
0 \leq \|\rho(\cdot)^{-\alpha}P(\cdot)\|_{L^\infty} \leq C \sup_{t >0} \|\Theta_{(k)}(t\sqrt{L})P \|_{L^\infty} \leq C '\sup_{t >0} \|\Theta_{(m)}(t\sqrt{L})P\|_{L^\infty} =0.
\end{align*}
Consequently, $\|\rho(\cdot)^{-\alpha}P(\cdot)\|_{L^\infty} =0$, i.e.,
\begin{align} \label{eq:rhoalphaP}
\rho(x)^{-\alpha} P(x) =0 \quad \text{for a.e. } x \in \mathbb{R}^n.
\end{align}

Since $|\{x \in \mathbb{R}^n: V (x) >0\}| \neq 0$, by Lemma \ref{lem:Vimpliesrho} we have
$0< \rho(x) < +\infty$ for every $x \in \mathbb{R}^n$. This together with \eqref{eq:rhoalphaP} implies 
that $P(x) =0$ for a.e. $x \in \mathbb{R}^n$.  
\end{proof}

\subsection*{Proof of (i) of Theorem \ref{thm:main-2} } 
 We first show that $\Lambda_L^\alpha (\mathbb{R}^n) \subset \mathcal{F}(\mathbb{R}^n)$.
Note that the condition $|\{x \in \mathbb{R}^n: V(x) >0\}| \neq 0$ implies 
\begin{align*} 
\rho(x) \lesssim (1 + |x|)^{\theta};
\end{align*}
see \eqref{eq:rhoxrho0}. 
Let $f \in \Lambda_L^\alpha (\mathbb{R}^n)$. Then from the definition of $\|\cdot\|_{\Lambda^\alpha_L}$ (see Definition \ref{def:Lipschitz}) we have $|f(x)| \lesssim \rho(x)^{\alpha}$ for a.e. $x \in \mathbb{R}^n$.
It follows that
\begin{align*}
   \int_{\mathbb{R}^n} \frac{|f(x)|}{(1 + |x|)^{n+1 + \theta \alpha}} dx  \lesssim    \int_{\mathbb{R}^n} \frac{\rho(x)^\alpha}{(1 + |x|)^{n+1 + \theta \alpha}} dx  \lesssim \int_{\mathbb{R}^n} \frac{1}{(1 + |x|)^{n+1}} \lesssim 1.
\end{align*}
This shows that $f \in \mathcal{F}(\mathbb{R}^n)$. Hence $\Lambda_L^\alpha (\mathbb{R}^n) \subset \mathcal{F}(\mathbb{R}^n)$.
 
To show the inequality \eqref{eq:1.7-i}, we pick any $f \in \Lambda_L^\alpha (\mathbb{R}^n) \subset \mathcal{F}(\mathbb{R}^n)$. Let $k \in \mathbb{N}$ with $k > \alpha /2$. Recall that $\Theta_{(k)}(\cdot)$ is a function 
on $[0,\infty)$ defined by \eqref{eq:def of Theta}. Then by (i) of Proposition \ref{prop:heat semigroup 1}
and Proposition \ref{prop:heat semigroup  3}, we have
\begin{align*}
\sup_{t >0} t^{-\alpha} \|\Theta_{(k)}(t\sqrt{L})f\|_{L^\infty} \lesssim \|f\|_{\Lambda_L^\alpha}.
\end{align*}
Consequently, for every $t >0$ and every $x \in \mathbb{R}^n$,
\[
|\Theta_{(k)}(t\sqrt{L})f(x)| \lesssim t^\alpha \|f\|_{\Lambda_L^\alpha}.
\]
It follows that 
\begin{align*}
\|d\mu_{(k,f)}\|_{\mathcal{C}^\alpha}   & =\left\| \big|\Theta_{(k)}(t\sqrt{L})f(x)\big|^2\frac{dxdt}{t}\right\|_{\mathcal{C}^\alpha}  \\ 
& =\sup_{\substack{B\subset \mathbb{R}^n \atop B: \text{ball}}}\frac{1}{|B|^{1+\frac{2\alpha}{n}}} \int_0^{r_B} \int_B\big|\Theta_{(k)}(t\sqrt{L})f(x)\big|^2 \frac{dxdt}{t}   \\
 & \lesssim \sup_{\substack{B\subset \mathbb{R}^n \atop B: \text{ball}}}\frac{\|f\|_{\Lambda_L^\alpha}^2}{|B|^{1+\frac{2\alpha}{n}}}  \int_0^{r_B} \int_B t^{2\alpha -1} dxdt   \\
 & \lesssim  \|f\|_{\Lambda_L^\alpha}^2,
\end{align*}
as desired.

\subsection*{Proof of (ii) of Theorem \ref{thm:main-2} }
We divide the proof into several steps.

\medskip
\noindent
{\bf Step 1.} Let $f \in \mathcal{F}(\mathbb{R}^n)$. We show that 
for any $N >0$ and  $\varepsilon >0$, there exists a constant $C >0$ such that for all $\ell \in \mathbb{Z}$
 and $y \in \mathbb{R}^n$,
\begin{align}  \label{eq:int t 12}
&\int_1^2|\Theta_{(k)}(2^{-\ell} t \sqrt{L})f(y)|^2 \frac{dt}{t}\nonumber \\
&\quad\quad \leq C \sum_{j =\ell}^\infty  2^{-(j -\ell) (4k -2\varepsilon)} \int_1^2 \int_{\mathbb{R}^n} \frac{2^{\ell n} |\Theta_{(k)}(2^{-j} t\sqrt{L})f(z)|^2}{(1+2^{\ell}|y-z|)^{2N-n-1}}dz
\frac{dt}{t}.
\end{align}

 Define a function 
 $\Upsilon$ on $[0,\infty)$ by 
 \[
 \Upsilon (\lambda):= e^{-\lambda^2}, \quad \lambda \in [0,\infty).
 \]
Since $|\Upsilon(\lambda)|>0$ and $|\Theta_{(k)}(\lambda)| >0$ for all $\lambda \in (0,\infty)$, 
by (i) of Lemma \ref{lem:construc LP} there exist $\Psi, \Gamma \in C^\infty ([0,\infty))$ such that $\supp \Psi \subset [1/2,2]$, 
$\supp \Gamma \in [0,2]$, and
\begin{align} \label{eq:inhomo cal rep}
\Gamma(\lambda) \Upsilon(\lambda) + \sum_{j=1}^\infty  \Psi (2^{-j}\lambda)  \Theta_{(k)}(2^{-j} \lambda)=1 , \quad \forall \lambda \in [0,\infty).   
\end{align}

For each $\ell \in \mathbb{Z}$ and $t \in [1,2]$, we may replace $\lambda$ in \eqref{eq:inhomo cal rep} by $2^{-\ell}t\lambda$
to get the identity
\begin{align*}
 \Gamma(2^{-\ell} t\lambda)  \Upsilon(2^{-\ell} t\lambda)+ \sum_{j=1}^\infty \Psi (2^{-(j+\ell)}t\lambda)   \Theta_{(k)}(2^{-(j+ \ell)} t\lambda)=1 , \quad \forall \lambda \in [0,\infty).       
\end{align*}
Then by Proposition \ref{prop:prop of Calderon }, we see that for any $f \in \mathcal{S}_L'(\mathbb{R}^n)$,
\begin{align} \label{eq:converges in distribution}
f =  \Gamma(2^{-\ell} t\sqrt{L}) \Upsilon(2^{-\ell}  t\sqrt{L})f + 
\sum_{j=1}^\infty \Psi (2^{-(j+\ell)}t\sqrt{L}) \Theta_{(k)}(2^{-(j+ \ell)} t\sqrt{L}) f,  
\end{align}
with convergence in $\mathcal{S}_L'(\mathbb{R}^n)$. 
Consequently, we have the following pointwise representation: for each $y \in \mathbb{R}^n$,
\begin{equation} \label{eq:pointwise}
\begin{split}
 \Theta_{(k)}(2^{-\ell} t\sqrt{L})f (y)
& = \Gamma(2^{-\ell} t\sqrt{L}) \Upsilon(2^{-\ell}  t\sqrt{L}) \Theta_{(k)}(2^{-\ell} t\sqrt{L})f (y)\\
&\quad + 
\sum_{j=1}^\infty  \Theta_{(k)}(2^{-\ell} t\sqrt{L}) \Psi (2^{-(j+\ell)}t\sqrt{L}) \Theta_{(k)}(2^{-(j+ \ell)} t\sqrt{L})f (y) .
\end{split}
\end{equation}

Let $N$ be an arbitrary positive number. Since $\Gamma,\Upsilon \in \mathcal{A}([0,\infty))$,
by Proposition~\ref{prop:K Phi L} we have
\begin{align*}
 \big| K_{ \Gamma(2^{-\ell} t\sqrt{L})\Upsilon(2^{-\ell} t\sqrt{L})} (y,z)\big| &  \lesssim (2^{-\ell }t)^{-n} 
 \big(1 +(2^{-\ell}t)^{-1}|y-z|\big)^{-N}\\
 & \sim 2^{\ell n} (1 +2^\ell |x-y|)^{-N}.
\end{align*}
Also, since $\Theta_{(k)}, \Psi \in \mathcal{A}([0,\infty))$ with $(\cdot)^{-2k} \Theta_{(k)} (\cdot) \in \mathcal{A}([0,\infty))$, it follows from
(ii) of Proposition~\ref{prop:AOE} that for $j =1,2,\dots$,
\begin{align*}
 \big| K_{  \Theta_{(k)}(2^{-\ell} t\sqrt{L}) \Psi (2^{-(j+\ell)}t\sqrt{L})} (y,z)\big|  & \lesssim (2^{-j}t)^{2k }
 (2^{-\ell }t)^{-n} \big(1 + (2^{-\ell}t)^{-1}|y-z| \big)^{-N}\\
 & \sim 2^{-2jk}
 2^{\ell n} (1 + 2^{\ell n} |y-z|)^{-N}.
\end{align*}
Using these kernel estimates, we then have
\begin{align} \label{eq:j=0}
\big|\Gamma(2^{-\ell} t\sqrt{L}) & \Upsilon (2^{-\ell} t\sqrt{L}) \Theta_{(k)}(2^{-\ell} t\sqrt{L})f (y) \big| \nonumber \\
&\leq \int_{\mathbb{R}^n} \big| K_{\Gamma (2^{-\ell} t\sqrt{L}) \Upsilon (2^{-\ell} t\sqrt{L})} (y,z)\big| \big| \Theta_{(k)}(2^{-\ell} t\sqrt{L})f (z)\big|dz \nonumber \\
& \lesssim \int_{\mathbb{R}^n} \frac{2^{\ell n}\big| \Theta_{(k)}(2^{-\ell} t\sqrt{L})f (z)\big|}{(1 + 2^\ell |y-z|)^N}dz,
\end{align}
and for $j =1,2,\dots,$
\begin{align} \label{eq:j geq 1}
&\big| \Theta_{(k)}(2^{-\ell} t\sqrt{L}) \Psi (2^{-(j+\ell)}t\sqrt{L}) \Theta_{(k)}(2^{-(j+ \ell)} t\sqrt{L})f (y) \big| \nonumber\\
&\quad\quad \lesssim   \int_{\mathbb{R}^n} \big| K_{\Theta_{(k)}(2^{-\ell} t\sqrt{L}) \Psi (2^{-(j+\ell)}t\sqrt{L}) } (y,z)\big| \big| \Theta_{(k)}(2^{-(j+ \ell)} t\sqrt{L})f (z)\big|dz \nonumber\\
&\quad\quad \lesssim  2^{-2jk}\int_{\mathbb{R}^n} \frac{2^{\ell n}\big| \Theta_{(k)}(2^{-(j+\ell)} t\sqrt{L})f (z)\big|}{(1 + 2^\ell |y-z|)^N}dz.
\end{align}
Inserting \eqref{eq:j=0} and \eqref{eq:j geq 1} into \eqref{eq:pointwise}, we obtain
\begin{equation} \label{eq:j=ell}
\begin{split}
\big|  \Theta_{(k)}(2^{-\ell} t\sqrt{L})f (y)\big| & \lesssim \int_{\mathbb{R}^n} \frac{2^{\ell n}\big| \Theta_{(k)}(2^{-\ell} t\sqrt{L})f (z)\big|}{(1 + 2^\ell |y-z|)^N}dz \\
& \quad\quad +\sum_{j =1} ^\infty   2^{-2jk}\int_{\mathbb{R}^n} \frac{2^{\ell n}\big| \Theta_{(k)}(2^{-(j+\ell)} t\sqrt{L})f (z)\big|}{(1 + 2^\ell |y-z|)^N}dz  \\
& = \sum_{j =0} ^\infty    2^{-2jk}\int_{\mathbb{R}^n} \frac{2^{\ell n}\big| \Theta_{(k)}(2^{-(j+\ell)} t\sqrt{L})f (z)\big|}{(1 + 2^\ell |y-z|)^N}dz .  
\end{split}
\end{equation}

Applying the Cauchy-Schwarz inequality first for the integral and then for the sum on the right-hand side of \eqref{eq:j=ell}, we have
\begin{align*}
\big|  \Theta_{(k)}(2^{-\ell} t\sqrt{L})f (y)\big| & \lesssim \sum_{j =0} ^\infty   2^{-2jk} \left(\int_{\mathbb{R}^n} \frac{2^{\ell n}\big| \Theta_{(k)}(2^{-(j+\ell)} t\sqrt{L})f (z)\big|^2 }{(1 + 2^\ell |y-z|)^{2N-n-1}}dz \right)^{\frac{1}{2}}\\
& \quad\quad\quad\quad\quad\quad 
\times  \left(\int_{\mathbb{R}^n} \frac{2^{\ell n}}{(1 + 2^\ell |y-z|)^{n+1}}dz \right)^{\frac{1}{2}} \\
& \lesssim \sum_{j =0} ^\infty   2^{-j(2k-\varepsilon)}2^{-j\varepsilon} \left(\int_{\mathbb{R}^n} \frac{2^{\ell n}\big| \Theta_{(k)}(2^{-(j+\ell)} t\sqrt{L})f (z)\big|^2 }{(1 + 2^\ell |y-z|)^{2N-n-1}}dz \right)^{\frac{1}{2}}\\ \\
 &\lesssim \left(\sum_{j =0} ^\infty   2^{-j(4k-2\varepsilon)}\int_{\mathbb{R}^n} \frac{2^{\ell n}\big| \Theta_{(k)}(2^{-(j+\ell)} t\sqrt{L})f (z)\big|^2}{(1 + 2^\ell |y-z|)^{2N -n -1}}dz \right)^{\frac{1}{2}} \\
 & \quad\quad\quad\quad\quad\quad \quad\quad\quad 
\times  \left(\sum_{j \in \mathbb{Z}} 2^{-2j\varepsilon} \right)^{\frac{1}{2}}\\
 &\lesssim \left(\sum_{j =0} ^\infty   2^{-j(4k-2\varepsilon)}\int_{\mathbb{R}^n} \frac{2^{\ell n}\big| \Theta_{(k)}(2^{-(j+\ell)} t\sqrt{L})f (z)\big|^2}{(1 + 2^\ell |y-z|)^{2N -n -1}}dz \right)^{\frac{1}{2}},
\end{align*}
where $\varepsilon$ is an arbitrarily positive number. 
Consequently, 
\begin{align*}
\big|  \Theta_{(k)}(2^{-\ell} t\sqrt{L})f (y)\big|^2 &\lesssim \sum_{j =0} ^\infty   2^{-j(4k-2\varepsilon)}\int_{\mathbb{R}^n} \frac{2^{\ell n}\big| \Theta_{(k)}(2^{-(j+\ell)} t\sqrt{L})f (z)\big|^2}{(1 + 2^\ell |y-z|)^{2N-n-1}}dz\\
& =  \sum_{j =\ell} ^\infty   2^{-(j-\ell)(4k-2\varepsilon)}\int_{\mathbb{R}^n} \frac{2^{\ell n}\big| \Theta_{(k)}(2^{-j} t\sqrt{L})f (z)\big|^2}{(1 + 2^\ell |y-z|)^{2N-n-1}}dz.
\end{align*}
Since the last estimate holds for all $t \in [1,2]$ and the  implicit constant is independent of $t$, it follows that
\begin{align} 
&\int_1^2|\Theta_{(k)}(2^{-\ell} t \sqrt{L})f(y)|^2 \frac{dt}{t} \nonumber \\
&\quad\quad\quad\quad \lesssim \sum_{j =\ell}^\infty  2^{-(j -\ell) (4k -2\varepsilon)} \int_1^2 \int_{\mathbb{R}^n} \frac{2^{\ell n} |\Theta_{(k)}(2^{-j} t\sqrt{L})f(z)|^2}{(1+2^{\ell}|y-z|)^{2N -n -1}}dz
\frac{dt}{t}.
\end{align}
 
\medskip
\noindent
{\bf Step 2.}
Let $\Phi \in C^\infty ([0,\infty))$ such that $\supp \Phi \subset [1/2,2]$ and $|\Phi(\lambda)| \geq c >0$ for $3/5 \leq \lambda \leq 5/3$, and 
let $f \in \mathcal{F}(\mathbb{R}^n)$. We show that for any $\nu \in \mathbb{Z}$ and $x \in \mathbb{R}^n$, 
\begin{align} \label{eq:int t 122}
2^{2\nu \alpha}\big|\Phi(2^{-\nu}\sqrt{L}) f (x)\big|^2 \leq C\sup_{\ell \in \mathbb{Z}}\sup_{y \in \mathbb{R}^n}2^{2\ell \alpha} \int_1^2 \big|\Theta_{(k)}(2^{-\ell }t\sqrt{L})f(y) \big|^2 \frac{dt}{t}.
\end{align}

 \medskip
Since $\Theta_{(k)}$ is smooth on $[0, \infty)$, decays rapidly at infinity and $\Theta_{(k)} (\lambda) >0$ on $[1/2,2]$,
there exists $\Omega \in C^\infty([0, \infty))$ such that $\supp \Omega \subset [1/2,2]$ and 
\begin{align*}
 \sum_{j \in \mathbb{Z}} \Omega (2^{-j}\lambda) \Theta_{(k)}(2^{-j}\lambda) =1, \quad \forall \lambda \in (0, \infty).   
\end{align*}
Consequently, for every $t \in [1,2]$,
\begin{align*}
 \sum_{j \in \mathbb{Z}} \Omega (2^{-j}t\lambda) \Theta_{(k)}(2^{-j}t\lambda) =1, \quad \forall \lambda \in (0, \infty).   
\end{align*}
Hence by Proposition \ref{prop:prop of polynomial}  we have,  for any $f \in \mathcal{S}_L'(\mathbb{R}^n)$,
\begin{align} \label{eq:homo cal rep fo}
f =  \sum_{j \in \mathbb{Z}} \Omega (2^{-j}t\sqrt{L})\Theta_{(k)}(2^{-j}t\sqrt{L})f  \quad \text{in } \mathcal{S}_L'(\mathbb{R}^n)/\mathcal{P}_L.
\end{align}
Recall that, since $\Phi$ vanishes near the origin, $\Phi(2^{-\nu}\sqrt{L})P =0$ for every $P 
\in \mathcal{P}_L$ and every $\nu \in \mathbb{Z}$;  see \eqref{eq:psi iden}. Hence
it follows from \eqref{eq:homo cal rep fo} that
for each $\nu \in \mathbb{Z}$ and $x \in \mathbb{Z}$,
\begin{align} \label{eq:Phi nu L}
\Phi(2^{-\nu}\sqrt{L}) f (x)& =  \sum_{j \in \mathbb{Z}} \Phi(2^{-\nu}\sqrt{L})\Omega (2^{-j}t\sqrt{L})\Theta_{(k)}(2^{-j}t\sqrt{L})f(x)  \\
&=  \sum_{j \in \mathbb{Z}} \int_{\mathbb{R}^n} K_{\Phi(2^{-\nu}\sqrt{L})\Omega (2^{-j}t\sqrt{L})}(x,y) \Theta_{(k)}(2^{-j}t\sqrt{L})f(y)dy.
\end{align}

Since $\Phi, \Omega \in \mathcal{A}([0,\infty))$ and both of them vanish near the origin, by Proposition \eqref{prop:AOE} we have that for any $M \in \mathbb{N}$,
\begin{align*}
&\big| K_{\Phi(2^{-\nu}\sqrt{L})\Omega (2^{-j}t\sqrt{L})}(x,y)\big| \\
&\quad \lesssim  \left(\frac{2^{-\nu}}{2^{-j}t} \wedge \frac{2^{-j}t}{2^{-\nu}}  \right)^{2M}
\big(2^{-\nu} \vee (2^{-j}t)\big)^{-n}\left(1 + \frac{|x-y|}{2^{-\nu} \vee (2^{-j}t)} \right)^{-(n+1)}\\
&\quad \lesssim 2^{-2|j-\nu|M} 2^{(j \wedge \nu)n} (1 + 2^{j\wedge \nu}|x-y|)^{-(n+1)},
\end{align*}
where the implicit constant is independent of $\nu, j \in \mathbb{Z}$ and $t \in [1,2]$.
Inserting this kernel estimate into \eqref{eq:Phi nu L}, we obtain
\begin{align*}
& 2^{\nu \alpha}\big|\Phi(2^{-\nu}\sqrt{L}) f (x)\big| \\
\lesssim  & \sum_{j \in \mathbb{Z}} 
 2^{-2|j-\nu|M}   2^{(\nu -j)\alpha} 
 \int_{\mathbb{R}^n} 2^{(j \wedge \nu)n} (1 + 2^{j\wedge \nu}|x-y|)^{-(n+1)} 2^{j\alpha} \big|\Theta_{(k)}(2^{-j}t\sqrt{L})f(y) \big|dy.
\end{align*}
Using the Cauchy-Schwarz inequality first for the integral and then for the sum on the right-hand side of the last estimate, it follows that for any $\varepsilon >0$,
{\small \begin{align*}
&2^{\nu \alpha}\big|\Phi(2^{-\nu}\sqrt{L}) f (x)\big| \\
\lesssim &  \sum_{j \in \mathbb{Z}} 
 2^{-|j-\nu|(2M-\alpha)}
\left( \int_{\mathbb{R}^n} 2^{(j \wedge \nu)n} (1 + 2^{j\wedge \nu}|x-y|)^{-(n+1)} 2^{2j\alpha} \big|\Theta_{(k)}(2^{-j}t\sqrt{L})f(y) \big|^2dy\right)^{\frac{1}{2}} \\
&\quad\quad\quad\quad\quad\quad\quad \times \left( \int_{\mathbb{R}^n} 2^{(j \wedge \nu)n} (1 + 2^{j\wedge \nu}|x-y|)^{-(n+1)} dy\right)^{\frac{1}{2}} \\
\lesssim &  \sum_{j \in \mathbb{Z}} 
 2^{-|j-\nu|(2M-\alpha -\varepsilon)}2^{-|j-\nu|\varepsilon}  \\
 & \quad\quad\quad\quad \times
\left( \int_{\mathbb{R}^n} 2^{(j \wedge \nu)n} (1 + 2^{j\wedge \nu}|x-y|)^{-(n+1)} 2^{2j\alpha} \big|\Theta_{(k)}(2^{-j}t\sqrt{L})f(y) \big|^2dy\right)^{\frac{1}{2}}  \\
\lesssim & \left( \sum_{j \in \mathbb{Z}} 
 2^{-|j-\nu|(4M-2\alpha -2\varepsilon)}  
\int_{\mathbb{R}^n} 2^{(j \wedge \nu)n} (1 + 2^{j\wedge \nu}|x-y|)^{-(n+1)} 2^{2j\alpha} \big|\Theta_{(k)}(2^{-j}t\sqrt{L})f(y) \big|^2dy\right)^{\frac{1}{2}}\\
 & \quad\quad\quad\quad\quad\quad\quad\quad  \times
\left(\sum_{j \in \mathbb{Z}} 2^{-2|j -\nu|\varepsilon}\right)^{\frac{1}{2}}  \\
\lesssim & \left( \sum_{j \in \mathbb{Z}} 
 2^{-|j-\nu|(4M-2\alpha -2\varepsilon)}  
\int_{\mathbb{R}^n} 2^{(j \wedge \nu)n} (1 + 2^{j\wedge \nu}|x-y|)^{-(n+1)} 2^{2j\alpha} \big|\Theta_{(k)}(2^{-j}t\sqrt{L})f(y) \big|^2dy\right)^{\frac{1}{2}}.
\end{align*}
Consequently,}
\begin{align*}
&2^{2\nu \alpha}\big|\Phi(2^{-\nu}\sqrt{L}) f (x)\big|^2 \\
\lesssim  & \sum_{j \in \mathbb{Z}} 
 2^{-|j-\nu|(4M-2\alpha -2\varepsilon)}  
\int_{\mathbb{R}^n} 2^{(j \wedge \nu)n} (1 + 2^{j\wedge \nu}|x-y|)^{-(n+1)} 2^{2j\alpha} \big|\Theta_{(k)}(2^{-j}t\sqrt{L})f(y) \big|^2dy . 
\end{align*}

Since this estimate holds for all $t \in [1,2]$ and the implicit constant is independent of $t \in [1,2]$, 
applying $\int_1^2 |\cdot| \frac{dt}{t}$ on both sides (noting that the left-hand side is independent of $t$), 
and using Fubini's theorem, we obtain
{\small \begin{align*} 
&2^{2\nu \alpha}\big|\Phi(2^{-\nu}\sqrt{L}) f (x)\big|^2      \\
 \lesssim  & \sum_{j \in \mathbb{Z}} 
 2^{-|j-\nu|(4M-2\alpha -2\varepsilon)}  
 \int_{\mathbb{R}^n} 2^{(j \wedge \nu)n} (1 + 2^{j\wedge \nu}|x-y|)^{-(n+1)} 
\left(2^{2j\alpha} \int_1^2 \big|\Theta_{(k)}(2^{-j}t\sqrt{L})f(y) \big|^2 \frac{dt}{t} \right)dy   \\
 \leq &  \left(\sup_{\ell \in \mathbb{Z}}\sup_{y \in \mathbb{R}^n}2^{2\ell \alpha} \int_1^2 \big|\Theta_{(k)}(2^{-\ell }t\sqrt{L})f(y) \big|^2 \frac{dt}{t} \right)  \sum_{j \in \mathbb{Z}} 
 2^{-|j-\nu|(4M-2\alpha -2\varepsilon)} .
 \end{align*}
We} now fix an $\varepsilon$ and choose $M$ sufficiently large such that $4M -2\alpha -2\varepsilon >0$. Then 
\[
\sum_{j \in \mathbb{Z}} 
 2^{-|j-\nu|(4M-2\alpha -2\varepsilon)} \lesssim 1,
\]
and hence
 \begin{align*}
2^{2\nu \alpha}\big|\Phi(2^{-\nu}\sqrt{L}) f (x)\big|^2  \lesssim \sup_{\ell \in \mathbb{Z}}\sup_{y \in \mathbb{R}^n}2^{2\ell \alpha} \int_1^2 \big|\Theta_{(k)}(2^{-\ell }t\sqrt{L})f(y) \big|^2 \frac{dt}{t}.
\end{align*}
 This proves \eqref{eq:int t 122}.

 \medskip
 \noindent
 {\bf Step 3.} Let $\Phi$ be as in Step 2. We show that there exists a constant $C$ such that for all
 $f \in \mathcal{H}_L(\mathbb{R}^n)$,
\begin{align} \label{eq:controlled by C alpha}
 \sup_{\nu \in \mathbb{Z}}2^{2\nu \alpha} \|\Phi(2^{-\nu}\sqrt{L})f\|_{L^\infty}^2 \leq C   \big\|d\mu_{(k,f)}\big\|_{\mathcal{C}^\alpha} .
\end{align}

\medskip

Since  \eqref{eq:int t 122} holds for all $\nu \in \mathbb{Z}$ and $x \in \mathbb{R}^n$, with the constant $C$
independent of $\nu$ and $x$, we have
\begin{align*}
  \sup_{\nu \in \mathbb{Z}}2^{2\nu \alpha} \|\Phi(2^{-\nu}\sqrt{L})f\|_{L^\infty}^2  \lesssim  
  \sup_{\ell \in \mathbb{Z}}\sup_{y \in \mathbb{R}^n}2^{2\ell \alpha} \int_1^2 \big|\Theta_{(k)}(2^{-\ell }t\sqrt{L})f(y) \big|^2 \frac{dt}{t}.
\end{align*}
This together with \eqref{eq:int t 12} yields
\begin{align*}
&\sup_{\nu \in \mathbb{Z}}2^{\nu \alpha} \|\Phi(2^{-\nu}\sqrt{L})f\|_{L^\infty}  \\
\lesssim &  \sup_{\ell \in \mathbb{Z}} \sup_{y \in \mathbb{R}^n} 2^{2\ell \alpha} \sum_{j =\ell}^\infty  2^{-(j -\ell) (4k -2
\varepsilon)} \int_1^2 \int_{\mathbb{R}^n} \frac{2^{\ell n} \big|\Theta_{(k)}(2^{-j} t\sqrt{L})f(z)\big|^2}{(1+2^{\ell}|y-z|)^{2N -n -1}}dz
\frac{dt}{t} \\
=    &
\sup_{\ell \in \mathbb{Z}} \sup_{y \in \mathbb{R}^n} 2^{2\ell \alpha} \sum_{j =\ell}^\infty  2^{-(j -\ell) (4k -2
\varepsilon)} \int_{2^{-j}}^{2^{-j+1}} \int_{\mathbb{R}^n} \frac{2^{\ell n}\big|\Theta_{(k)}( t\sqrt{L})f(z)\big|^2}{(1+2^{\ell}|y-z|)^{2N-n-1}}dz
\frac{dt}{t}.
\end{align*}

To prove \eqref{eq:controlled by C alpha}, it suffices to show that there exists a constant 
$C >0$ such that for every
$\ell \in \mathbb{Z}$ and evert $y \in \mathbb{R}^n$,
\begin{align} \label{eq:desired}
 \sum_{j =\ell}^\infty  2^{-(j -\ell) (4k -2
\varepsilon)} \int_{2^{-j}}^{2^{-j+1}} \int_{\mathbb{R}^n} \frac{2^{\ell n}\big|\Theta_{(k)}( t\sqrt{L})f(z)\big|^2}{(1+2^{\ell}|y-z|)^{2N-n-1}}dz
\frac{dt}{t} \leq C  2^{-2\ell \alpha}\big\|d\mu_{(k,f)}\big\|_{\mathcal{C}^\alpha},
\end{align}
provided that $\varepsilon < 2k$ and $N \geq n+1$. 

Fix each $\ell \in \mathbb{Z}$, we denote
\[
\mathcal{X}_\ell := 2^{-\ell-k_0}\mathbb{Z}^n,
\]
where $k_0$ is a fixed positive integer such that  $2^{-k_0} < 1/\sqrt{n}$. Then we have the following properties.
\begin{enumerate} 
    \item For each $\ell \in \mathbb{Z}$, 
    \begin{align*}
    \mathbb{R}^n = \bigcup_{x \in \mathcal{X}_\ell}B(x,2^{-\ell + 1}).
    \end{align*}
    \item There exists a fixed integer $m_0$ such that for each $\ell \in \mathbb{Z}$, 
    \begin{align} \label{eq:m0}
     \sum_{x \in \mathcal{X}_\ell} \chi_{B(x,2^{-\ell +1})}(z) \leq m_0  \quad \text{for all } z \in \mathbb{R}^n. 
    \end{align}
    \item There exist a constant $C \geq 1$ such that for all $x \in \mathcal{X}_\ell$ and $y \in \mathbb{R}^n$,
\begin{align} \label{eq:sup inf eq}
 \sup_{z \in B(x,2^{-\ell + 1})} (1 + 2^\ell |y-z|) \leq C \inf_{z \in B(x,2^{-\ell +1})} (1 + 2^{\ell} |y-z|).   
\end{align}
\end{enumerate}

\medskip
Note that for each  $j \geq \ell$, 
{\small \begin{align*}
&  \int_{2^{-j}}^{2^{-j+1}} \int_{\mathbb{R}^n} \frac{2^{\ell n}\big|\Theta_{(k)}( t\sqrt{L})f(z)\big|^2}{(1+2^{\ell}|y-z|)^{2N-n-1}}dz \frac{dt}{t}  \\
& \leq \sum_{x \in \mathcal{X}_\ell}\int_{2^{-j}}^{2^{-j+1}} \int_{B(x,2^{-\ell +1})} \frac{2^{\ell n}\big|\Theta_{(k)}( t\sqrt{L})f(z)\big|^2}{(1+2^{\ell}|y-z|)^{2N-n-1}}dz \frac{dt}{t}  \\
& \leq \sum_{x \in \mathcal{X}_\ell} \left(\sup_{z \in B(x,2^{-\ell +1})} \frac{2^{\ell n }}
{(1+2^{\ell}|y-z|)^{2N-n-1}} \right)  \int_{0}^{2^{-\ell+1}}\int_{B(x,2^{-\ell +1})}  \big|\Theta_{(k)}( t\sqrt{L})f(z)\big|^2 dz \frac{dt}{t}.
\end{align*}
By} \eqref{eq:def of d mu k f} we have
\begin{align*}
 \int_{0}^{2^{-\ell+1}}\int_{B(x,2^{-\ell +1})}  \big|\Theta_{(k)}( t\sqrt{L})f(z)\big|^2 dz \frac{dt}{t} 
& \leq |B(x,2^{-\ell +1})|^{1 +\frac{2\alpha}{n}} \big\| d\mu_{(k,f)}\big\|_{\mathcal{C}^\alpha}\\
 & \sim 2^{-2\ell \alpha} \big\| d\mu_{(k,f)}\big\|_{\mathcal{C}^\alpha} |B(x,2^{-\ell +1})|.
\end{align*}
Using this and \eqref{eq:sup inf eq}, it follows that
\begin{align*}
&  \int_{2^{-j}}^{2^{-j+1}} \int_{\mathbb{R}^n} \frac{2^{\ell n}\big|\Theta_{(k)}( t\sqrt{L})f(z)\big|^2}{(1+2^{\ell}|y-z|)^{2N-n-1}}dz \frac{dt}{t}  \\
& \quad\lesssim  2^{-2\ell \alpha } \big\| d\mu_{(k,f)}\big\|_{\mathcal{C}^\alpha}\sum_{x \in \mathcal{X}_\ell} \left(\inf_{z \in B(x,2^{-\ell +1})} \frac{2^{\ell n }}
{(1+2^{\ell}|y-z|)^{2N-n-1}} \right)   |B(x,2^{-\ell +1})| \\
& \quad \leq  2^{-2\ell \alpha } \big\| d\mu_{(k,f)}\big\|_{\mathcal{C}^\alpha}\sum_{x \in \mathcal{X}_\ell}  \int_{B(x,2^{-\ell +1})} \frac{2^{\ell n}}{(1+2^{\ell}|y-z|)^{2N-n-1}}dz .
\end{align*}
By \eqref{eq:m0}, we have
\begin{align*}
 \sum_{x \in \mathcal{X}_\ell}  \int_{B(x,2^{-\ell +1})} \frac{2^{\ell n}}{(1+2^{\ell}|y-z|)^{2N-n-1}}dz    
 \leq m_0 \int_{\mathbb{R}^n} \frac{2^{\ell n}}{(1+2^{\ell}|y-z|)^{2N-n-1}}dz \lesssim 1,
\end{align*}
provided that $N \geq n+1$. Hence
\begin{align*}
\int_{2^{-j}}^{2^{-j+1}} \int_{\mathbb{R}^n} \frac{2^{\ell n}\big|\Theta_{(k)}( t\sqrt{L})f(z)\big|^2}{(1+2^{\ell}|y-z|)^{2N-n-1}}dz \frac{dt}{t} \lesssim
2^{-2\ell \alpha } \big\| d\mu_{(k,f)}\big\|_{\mathcal{C}^\alpha}.
\end{align*}
Consequently, if $\varepsilon < 2k$, then
\begin{align*}
\text{LHS} \; \eqref{eq:desired}  &\lesssim 2^{-2\ell \alpha } \big\| d\mu_{(k,f)}\big\|_{\mathcal{C}^\alpha} \sum_{j =\ell}^\infty  2^{-(j -\ell) (4k-2\varepsilon)} \\
& \lesssim 2^{-2\ell \alpha } \big\| d\mu_{(k,f)}\big\|_{\mathcal{C}^\alpha},
\end{align*}
as desired.

\medskip
\noindent
{\bf Step 4.} With \eqref{eq:controlled by C alpha} in hand, we aim to complete the proof of (ii) of Theorem \ref{thm:main-2}.

\medskip
Since \eqref{eq:controlled by C alpha} holds, 
by Theorem \ref{thm:main-1} there exists a generalized polynomial $P_f \in \mathcal{P}_L$
such that $f -P_f \in \Lambda^\alpha_L(\mathbb{R}^n)$ and 
\begin{align} \label{eq:f-P_f}
\|f -P_f\|_{ \Lambda^\alpha_L}    \lesssim \big\|d\mu_{(k,f)}\big\|_{\mathcal{C}^\alpha}.
\end{align}
Hence, by (i) of Proposition \ref{prop:heat semigroup 1}, 
$f-P_f \in \mathcal{H}_L(\mathbb{R}^n)$.
On the other hand, since $f \in \mathcal{F}(\mathbb{R}^n)$,  from Lemma \ref{lem:5.2} 
we see that $f \in \mathcal{H}_L(\mathbb{R}^n)$. 
Therefore, $P_f =P_f-f +f \in \mathcal{H}_L(\mathbb{R}^n)$. 
It then follows by Lemma \ref{lem: size0} that $P_f (x) =0$ for a.e. $x \in \mathbb{R}^n$. 
From this and \eqref{eq:f-P_f}, we infer that $f \in  \Lambda^\alpha_L(\mathbb{R}^n)$ and
$\|f \|_{ \Lambda^\alpha_L}    \lesssim \big\|d\mu_{(k,f)}\big\|_{\mathcal{C}^\alpha}$.

\end{document}